\newtheorem{theorem}{Theorem}[section]
\newtheorem{defn}{Definition}[section]
\newtheorem{prop}[theorem]{Proposition}
\newtheorem{rem}[theorem]{Remark}
\newtheorem{cro}{Corollary}[section]
\newtheorem{thm}{Theorem}[section]
\newtheorem{lem}{Lemma}[section]
\begin{document}

\title{Multifractal analysis for historic set in topological
dynamical systems
%\footnotetext {* Corresponding author}
 \footnotetext {2010 Mathematics Subject Classification: 37A35, 37B40}}
\author{Xiaoyao Zhou $^\dag,$  Ercai Chen$^{\dag \ddag}$\\
\small \it $\dag$ School of Mathematical Sciences and Institute of Mathematics, Nanjing Normal University,\\
\small \it Nanjing 210023, P.R.China,
\small \it $\ddag$ Center of Nonlinear Science,\\
\small \it Nanjing University, Nanjing 210093,  P.R.China.\\
\small \it e-mail:
\small \it\noindent zhouxiaoyaodeyouxian@126.com\\
\small \it \noindent ecchen@njnu.edu.cn}
\date{}
\maketitle

\noindent\textbf{Abstract.} In this article,  the historic set is
divided into different level sets and we use topological pressure to
describe the size of these level sets. We give an application of
these results to dimension theory. Especially,  we use topological
pressure to describe the relative multifractal spectrum of ergodic
averages and give a positive answer to the conjecture posed by L.
Olsen (J. Math. Pures Appl. {\bf 82} (2003)).

\noindent \textbf{Keywords: }Topological pressure; historic set;
multifractal analysis.

\section{Introduction}

A topological dynamical system is a triple $(X,d,T)$ (or tuple
$(X,T)$ for short) consisting of a compact metric space $(X,d)$  and
a continuous map $T:X\to X.$

An orbit $\{x,T(x),T^2(x),\cdots\}$ has historic behavior if for
some continuous function $\psi:X\to\mathbb{R},$ the average

\begin{equation*}
\lim\limits_{n\to\infty}\frac{1}{n}\sum\limits_{i=0}^{n-1}\psi(T^i(x))
\end{equation*}
does not exist. This terminology was introduced by Ruelle in
\cite{Rue}. If this limit does not exist, it follows that 'partial
averages' $\lim_{n\to\infty}\frac{1}{n}\sum_{i=0}^{n-1}\psi(T^ix)$
keep change considerably so that their values give information about
the epoch to which $n$ belongs.

The problem, whether there are persistent classes of smooth
dynamical systems such that the set of initial states which give
rise to orbits with historic behavior has 'positive Lebesgue
measure' was discussed by Ruelle  \cite{Rue}. Takens also
investigated the problem in the survey  \cite{Tak}.

 Very recently,
the idea of multifractal analysis plays an important role in the
study of dynamical system. V. Climenhaga \cite{Cli} considered the
topological pressure  function on the level sets of asymptotically
defined quantities in a topological dynamical systems. D. Feng and
W. Huang \cite{FenHua} studied the general asymptotically
sub-additive on general topological dynamical systems and
established some variational relations between the topological
entropy of the level sets of Lyapunov exponents, measure-theoretic
entropies and topological pressures in this general situation.

In this article, we will use the framework introduced by Olsen to
investigate the geometric structure of the historic set in view of
multifractal analysis.

Denote by $M(X), M(X,T)$ and $E(X,T),$ the set of all Borel
probability measures on $X,$
 the collection of all $T$-invariant Borel probability measures,
and  the set of all ergodic $T$-invariant Borel probability
measures, respectively.

 It is well-known that $M(X)$ and
$M(X,T)$ are both  convex, compact spaces endowed with weak*
topology. For $\mu, \nu\in M(X),$  define a compatible metric $\rho$
on $M(X)$ as follows:

\begin{equation*}
\rho(\mu,\nu):=\sum\limits_{k\geq1}\frac{|\int_X f_{k}d\mu-\int_X
f_kd\nu|}{2^{k}},
\end{equation*}
where $\{f_1,f_2,\cdots\}$ is a countable and dense in $C(X,[0,1]).$
Note that $\rho(\mu,\nu)\leq 1,$ for any $\mu,\nu\in M(X).$ This
article uses an equivalent metric on $X,$ still denoted by $d$,

\begin{equation*}
d(x,y):=\rho(\delta_{x},\delta_{y})
\end{equation*}
for convenience. For $n\in\mathbb{N}$, let $L_{n}:X\to M(X)$ be the
$n$-th empirical measure, i.e.,

\begin{equation*}
L_{n}x=\frac{1}{n}\sum\limits_{k=0}^{n-1}\delta _{T^{k}x},
\end{equation*}
where $\delta_{x}$ denotes the Dirac measure at $x.$ Let $\Xi$ be a
continuous affine map from $M(X)$ to a vector space $Y$ with a
linear compatible metric $d'$. $(Y,\Xi)$ is called a deformation of
$L_{n}.$ Let  $A(x_{n})$ be the set of accumulation points of
$\{x_{n}\}$ and $
 D(T,\Xi) $ be the set consists of  the points $x$ such that $ \lim\limits_{n}\Xi L_{n}x$ does not
exist. $D(T,\Xi)$ is called the historic set  for $(X,T).$

This article is devoted to investigate the structure of $ D(T,\Xi)$
via the following framework introduced and developed by Olsen
\cite{Ols1}, \cite{Ols2}, \cite{Ols3}, \cite{Ols4}
 and Olsen \& Winter \cite{OlsWin}.

 More precisely,
for a subset $C$ of $Y,$ this article uses topological pressure to
describe the size of the following so-called sup set, equ set and
sub set:

\begin{eqnarray*}\begin{split}
&\Delta_{sup}(C)=\{x\in X:A(\Xi L_{n}x)\subset
C\},\\&\Delta_{equ}(C)=\{x\in X:A(\Xi L_{n}x)=
C\},\\&\Delta_{sub}(C)=\{x\in X:A(\Xi L_{n}x)\supset C\}.
\end{split}\end{eqnarray*}
Such sets together give us a complete description of the dynamics of
the historic set and provides the basis for a substantially better
understanding of the underlying geometry of the historic set. More
generally, for $S_{1},S_{2}\subset Y,$ considering
$\Delta(S_{1},S_{2})=\{x\in X:S_{1}\subset A(\Xi L_{n}x)\subset
S_{2}\},$ we have
\begin{eqnarray*}\begin{split}
&\Delta(\emptyset,C)=\Delta_{sup}(C);\\&\Delta(C,C)=\Delta_{equ}(C);\\&\Delta(C,Y)=\Delta_{sub}(C).
\end{split}\end{eqnarray*}
Obviously, multifractal analysis is a special case of this
framework. For example, for any $\phi\in C(X,\mathbb{R}),$ choose
$Y=\mathbb{R},$ and define $\Xi: M(X)\to \mathbb{R}$ by $
\Xi:\mu\mapsto\int\phi d\mu.$ Then for $
C=\{\alpha\}\subseteq\mathbb{R},$ it follows that

\begin{equation}\label{formula1}
\Delta_{equ}(C)=\left\{x\in
X:\lim\limits_{n\to\infty}\frac{1}{n}\sum\limits_{k=0}^{n-1}f(T^kx)=\alpha\right\},
\end{equation}
and

\begin{equation}\label{formula2}
 D(T,\Xi)=\left\{x\in X:{\rm the~ limit~} \lim\limits_{n\to\infty}\frac{1}{n}\sum\limits_{k=0}^{n-1}f(T^kx) {\rm~does ~not~
exist}\right\}.
\end{equation}
 Previous studies \cite{BarSau},
\cite{BarSauSch}, \cite{FanFen}, \cite{PeiChe1}, \cite{PfiSul2},
\cite{TakVer}, \cite{Tho1}, \cite{Yam} have obtained a number of
fruitful results regarding different quantities to describe
 the size of (\ref{formula1}) in some dynamical systems with some mixing
properties. The quantities include  Hausdorff dimension, packing
entropy,  topological entropy and topological pressure. Dynamical
systems can be symbolic spaces or satisfy some mild conditions such
as the specification property or  the g-almost product property and
so on.

At the beginning, $D(T,\Xi)$ had been considered of little interest
in dynamical systems and geometric measure theory
 due to the fact that $\mu( D(T,\Xi))=0$ for any
$\mu\in M(X,T).$ However, recent work  \cite{BarSch},
\cite{CheTasShu}, \cite{PeiChe2}, \cite{Tho2}, \cite{ZhoChe} has
changed such attitudes. Hausdorff dimension or topological entropy
or topological pressure of (\ref{formula2}) can be large enough even
equal to that of the whole space.
 It illustrates that the historic set has rich
information. Hence, it is meaningful to divide the historic set into
different level sets and investigate these level sets. A series
results in symbolic space and iterated function system can be found
in \cite{BaeOlsSni}, \cite{Ols1}, \cite{Ols2}, \cite{Ols3},
\cite{Ols4}. This article divide $D(T,\Xi)$ into different level
sets $\Delta_{equ}(\cdot)$ and $\Delta_{sub}(\cdot).$

This investigation uses topological pressure to describe
$\Delta_{equ}(\cdot), \Delta_{sub}(\cdot)$ and so on. Topological
pressure is a powerful tool and is not only a generalization of
topological entropy but also closely related to Hausdorff dimension.
This article discusses the dynamical systems satisfying g-almost
product property and the uniform separation property that were
introduced by C. Pfister and W. Sullivan \cite{PfiSul2}. These two
properties are strictly weaker than the specification property and
the positive expansive property. (For example, all $\beta$-shifts
have the g-almost product property and the uniform separation
property is true for expansive and more generally asymptotically
h-expansive maps.)

As an application of our results, we study symbolic spaces and
iterated function systems. We stress that the metric in symbolic
space here is ultrametric rather than the metric in \cite{Ols2}.

\noindent For $\varphi\in C(X,\mathbb{R}),$ define
\begin{equation} \Lambda(y,\varphi)=\left\{
\begin{array}{lr}
\sup\limits_{\mu\in M(X,T)\atop\Xi \mu=y}\{h(T,\mu)+\int\varphi d\mu\},  {~ \rm for~} &y\in\Xi(M(X,T))  \\
-\infty,  &{\rm otherwise}.
\end{array}\right.
\end{equation}
We state our main theorems as below:

\begin{thm}\label{thm1.1}
$(X,T,\Xi,L_{n},Y)$ satisfies g-almost product property and the
uniform separation property and $\varphi\in C(X,\mathbb{R}).$ If
\begin{enumerate}
  \item $C\subset Y$
is not a compact and connected subset of  $\Xi(M(X,T)),$ then

\begin{equation*}
\{x\in X:A(\Xi L_{n}x)=C \}=\emptyset,
\end{equation*}

  \item $C\subset Y$ is a compact and connected subset of $\Xi(M(X,T)),$ then

\begin{equation*}
P(\Delta_{equ}(C),\varphi)=\inf\limits_{y\in C}\sup\limits_{\mu\in
M(X,T)\atop\Xi \mu=y}\left\{h(T,\mu)+\int\varphi
d\mu\right\}=\inf\limits_{y\in C}\Lambda(y,\varphi).
\end{equation*}
\end{enumerate}
\end{thm}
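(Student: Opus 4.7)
I would prove the two parts separately.

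For (1), the plan is to observe that for every $x \in X$ the set $A(\Xi L_n x)$ is automatically (i) compact (since $Y$ is metric and the sequence lies in the compact set $\Xi(M(X))$), (ii) contained in $\Xi(M(X,T))$ (every weak-$*$ accumulation point of $L_n x$ is $T$-invariant, and $\Xi$ is continuous), and (iii) connected. Property (iii) follows from the elementary topological fact that the accumulation set of a sequence $(z_n)$ in a compact metric space is connected whenever $d'(z_{n+1},z_n)\to 0$. Here a direct computation shows $\|L_{n+1}x - L_n x\|_{TV} = O(1/n)$, hence $\rho(L_{n+1}x,L_n x)\to 0$, and uniform continuity of $\Xi$ on $M(X)$ transfers this to $\Xi L_n x$. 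If $C$ fails any of (i)--(iii), then no $x$ can satisfy $A(\Xi L_n x)=C$, proving (1).

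For the upper bound in (2), fix $y\in C$ and note $\Delta_{equ}(C)\subseteq\{x:y\in A(\Xi L_n x)\}$. For $x$ in the latter set, extract a subsequence $n_k$ with $\Xi L_{n_k}x\to y$ and a further subsequence so that $L_{n_k}x\to\mu\in M(X,T)$; continuity of $\Xi$ forces $\Xi\mu=y$, i.e.\ $\mu\in K_y:=\Xi^{-1}(y)\cap M(X,T)$. A standard Bowen--ball / Katok-type argument (as used by Pfister--Sullivan) then gives
\[
P\bigl(\{x:A(L_n x)\cap K_y\neq\emptyset\},\varphi\bigr)\le \sup_{\mu\in K_y}\Bigl(h(T,\mu)+\int\varphi\,d\mu\Bigr)=\Lambda(y,\varphi).
\]
Taking infimum over $y\in C$ yields $P(\Delta_{equ}(C),\varphi)\le \inf_{y\in C}\Lambda(y,\varphi)$. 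This direction does not use g-almost product or uniform separation, only the definition of pressure.

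For the matching lower bound, I would adapt the Pfister--Sullivan/Thompson saturated set construction. Choose a countable dense sequence $(y_i)$ in $C$ and invariant measures $\mu_i$ with $\Xi\mu_i=y_i$ and $h(T,\mu_i)+\int\varphi\,d\mu_i$ within $\eta$ of $\inf_{y\in C}\Lambda(y,\varphi)$. Using g-almost product, glue long $(\mu_i,\varepsilon_i)$-generic blocks into orbit segments whose block lengths $N_i$ grow rapidly enough that the empirical measure $L_n x$ spends almost all its time near the current $\mu_i$. Uniform separation provides quantitative lower bounds on the cardinality of $(n,\varepsilon)$-separated sets inside each generic block, and a Misiurewicz-type box-counting argument on the resulting Cantor-like subset of $\Delta_{equ}(C)$ then yields a pressure lower bound arbitrarily close to $\inf_{y\in C}\Lambda(y,\varphi)$.

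The main obstacle will be ensuring that the constructed orbits lie in $\Delta_{equ}(C)$, i.e.\ that $A(\Xi L_n x)$ equals $C$ rather than merely contains $\{y_i\}$ (which only yields the inclusion $\supseteq C$ via density). Controlling $\Xi L_n x$ during the transitions between consecutive $\mu_i$-blocks is where the connectedness assumption on $C$ becomes essential: between $y_i$ and $y_{i+1}$ one must interpolate with intermediate invariant measures whose $\Xi$-images trace a near-path inside $C$, exploiting density of $(y_i)$ and connectedness of $C$ so that the running averages never stray far from $C$. Reconciling this topological control with the separation-based pressure estimate is the technical heart of the argument, and it is precisely where the Pfister--Sullivan hypotheses earn their keep.
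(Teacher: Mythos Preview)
Your proposal is correct and follows the paper's route closely: Part~(1) via compactness/connectedness of accumulation sets (the paper packages this as Proposition~2.5), the upper bound in~(2) by the inclusion $\Delta_{equ}(C)\subseteq\{x:y\in A(\Xi L_n x)\}$ together with the known pressure bound for such sets (Proposition~3.1/Theorem~3.1), and the lower bound by the Pfister--Sullivan gluing construction (Theorem~3.2). On the obstacle you flag at the end, the paper sidesteps an explicit interpolation step by invoking a topological lemma of Pfister--Sullivan (Lemma~3.1 here): any nonempty compact connected $C$ admits a sequence $(\alpha_k'')$ in $C$ with $\overline{\{\alpha_j'':j>n\}}=C$ for every $n$ \emph{and} $d'(\alpha_j'',\alpha_{j+1}'')\to 0$; taking this as the target sequence from the outset makes successive blocks automatically $\Xi$-close, so the running averages never leave a small neighbourhood of $C$ and one gets $A(\Xi L_n x)=C$ directly, absorbing your proposed interpolation into the initial choice of sequence.
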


\begin{thm}\label{thm1.2}
 $(X,T,\Xi,L_{n},Y)$ as before and $\varphi\in C(X,\mathbb{R}).$
If
\begin{enumerate}
  \item  $C\subset
Y$ is not a subset of $\Xi(M(X,T)),$ then

\begin{equation*}
\{x\in X:C\subset A(\Xi L_{n}x)\}=\emptyset,
\end{equation*}
  \item   $C\subset Y$ is  a subset of $\Xi(M(X,T)),$ then

\begin{equation*}
P(\Delta_{sub}(C),\varphi)=\inf\limits_{y\in C}\sup\limits_{\mu\in
M(X,T)\atop\Xi\mu=y}\left\{h(T,\mu)+\int\varphi
d\mu\right\}=\inf\limits_{y\in C} \Lambda(y,\varphi).
\end{equation*}
\end{enumerate}
\end{thm}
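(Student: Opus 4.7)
The first part follows from a routine weak-$*$ compactness observation: since $L_{n}x - L_{n}(Tx) = \tfrac{1}{n}(\delta_{T^{n}x} - \delta_{x}) \to 0$ in $\rho$, every accumulation point of $\{L_{n}x\}$ in $M(X)$ is $T$-invariant, and by continuity and affineness of $\Xi$ one has $A(\Xi L_{n}x) \subset \Xi(M(X,T))$ for every $x \in X$. Hence $C \not\subset \Xi(M(X,T))$ forces $\Delta_{sub}(C)=\emptyset$.

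For the pressure formula when $C\subset\Xi(M(X,T))$, I would establish the two inequalities separately. For the upper bound, observe that $\Delta_{sub}(C) \subset \{x\in X : y\in A(\Xi L_{n}x)\}$ for every fixed $y\in C$. If $y \in A(\Xi L_{n}x)$, then along a subsequence $\Xi L_{n_{k}}x\to y$, and by weak-$*$ compactness together with Part 1 a further refinement satisfies $L_{n_{k}}x \to \mu$ for some $\mu \in M(X,T)$ with $\Xi\mu=y$. A Katok--Bowen-type pressure estimate in the g-almost product setting (of the kind developed by Pfister--Sullivan and Thompson for generic points) then yields $P(\{x:y\in A(\Xi L_{n}x)\},\varphi)\le\Lambda(y,\varphi)$. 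Taking the infimum over $y\in C$ gives the desired bound.

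For the lower bound set $s=\inf_{y\in C}\Lambda(y,\varphi)$ and fix $\varepsilon>0$. After replacing $C$ by $\overline{C}$ (which does not change $\Delta_{sub}(C)$, since $A(\Xi L_{n}x)$ is always closed), pick a countable dense subset $\{y_{i}\}_{i\ge1}$ of $C$ and invariant measures $\mu_{i}$ with $\Xi\mu_{i}=y_{i}$ and $h(T,\mu_{i})+\int\varphi\,d\mu_{i}>s-\varepsilon$. The uniform separation property produces, for each $i$ and each small $\eta>0$, $(n,\delta)$-separated subsets of the weak-$*$ tubes $\{z\in X:\rho(L_{n}z,\mu_{i})<\eta\}$ whose cumulative $e^{S_{n}\varphi}$-weight is at least $\exp\!\bigl(n(h(T,\mu_{i})+\int\varphi\,d\mu_{i}-\varepsilon)\bigr)$. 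I would then concatenate such orbit segments, using the g-almost product property as a pseudo-orbit closing tool, along rapidly growing stage lengths $n_{1}\ll n_{2}\ll\cdots$ with precisions $\eta_{k}\downarrow0$, so that at stage $k$ each orbit in the construction shadows $\mu_{1},\ldots,\mu_{k}$ in succession for roughly $n_{k}$ steps each. A telescoping argument ensures that for every $x$ in the resulting Moran-type Cantor set $F$ one has $A(\Xi L_{n}x)\supset\overline{\{y_{i}\}}\supset C$, and a pressure distribution principle applied to $F$ yields $P(F,\varphi)\ge s-O(\varepsilon)$.

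The delicate point, and the main obstacle, is calibrating this Moran construction so that both goals are met simultaneously. To force $C\subset A(\Xi L_{n}x)$ one must visit infinitely many distinct $\mu_{i}$, each visit incurring a gluing cost from the g-almost product property and a transition cost from the switch of measure; to retain $P(F,\varphi)\ge s-O(\varepsilon)$ these costs must be absorbed into the exponential growth rate. The calibration is effected by choosing $n_{k}$ to grow so fast that the cumulative overhead from stages $1,\ldots,k-1$ is negligible compared to $n_{k}$, while letting $\eta_{k}\downarrow0$ quickly enough to force the finite-time empirical measures to cluster at each $\mu_{i}$ and only at $T$-invariant measures. This is precisely the point at which the interplay between the g-almost product and uniform separation properties becomes essential, and the construction parallels the one needed for Theorem~\ref{thm1.1}.
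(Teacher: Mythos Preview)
Your treatment of Part~(1) and of the upper bound in Part~(2) matches the paper's: the paper also bounds $P(\Delta_{sub}(C),\varphi)$ above by $P(\{x:y\in A(\Xi L_{n}x)\},\varphi)\le\Lambda(y,\varphi)$ for each fixed $y\in C$, via Theorem~\ref{thm3.1}(1) and Proposition~\ref{prop3.1}.

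For the lower bound the paper does \emph{not} run a fresh Moran construction. It observes that $\overline{co}(C)$ is compact, connected, and contained in the compact convex set $\Xi(M(X,T))$, so Theorem~\ref{thm1.1} applies to $\Delta_{equ}(\overline{co}(C))\subset\Delta_{sub}(C)$ and yields $P(\Delta_{sub}(C),\varphi)\ge\inf_{y\in\overline{co}(C)}\Lambda(y,\varphi)$. Two short lemmas then identify this infimum with $\inf_{y\in C}\Lambda(y,\varphi)$: Lemma~\ref{lem4.1} passes from $C$ to $co(C)$ using that the entropy map is affine, and Lemma~\ref{lem4.2} passes to the closure using that the entropy map is upper semi-continuous under the standing hypotheses. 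All the hard work is thus recycled from Theorem~\ref{thm1.1}, and the new input is purely measure-theoretic information about $\Lambda$.

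Your direct construction, as described, has a genuine scheduling defect. If at stage $k$ the orbit shadows $\mu_{1},\dots,\mu_{k}$ in succession for the \emph{same} length $n_{k}$, then when the $\mu_{i}$-block ends the elapsed time is $\approx i\,n_{k}$ (assuming $n_{k}$ dominates all earlier stages), and the empirical measure is close to $\tfrac{1}{i}(\mu_{1}+\cdots+\mu_{i})$ rather than to $\mu_{i}$. So the accumulation points you manufacture are Ces\`aro averages of the $y_{i}$, and the telescoping argument gives $A(\Xi L_{n}x)\supset\{\tfrac{1}{i}\sum_{j\le i}y_{j}:i\ge1\}$, which need not contain $C$. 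The fix---letting the visit lengths grow so fast \emph{within} each stage that every block dominates its own prefix---is exactly the delicate calibration you flag, and it is precisely what the paper sidesteps by passing to $\overline{co}(C)$ and invoking the affineness and upper semi-continuity of the entropy map instead.
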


\begin{thm}\label{thm1.3}
$(X,T,\Xi,L_{n},Y)$ as before and $\varphi\in C(X,\mathbb{R}),$ fix
$S_{1}\subset\Xi(M(X,T)),S_{2}\subset Y,$ if
\begin{enumerate}
  \item $S_{1}=\emptyset$, then

\begin{equation*}
P(\Delta(S_{1},S_{2}),\varphi)=\sup\limits_{x\in
S_{2}}\Lambda(x,\varphi),
\end{equation*}
  \item   $S_{1}\neq\emptyset$ and $S_{1}$ is contained in a connected
component of $S_{2},$ then

\begin{equation*}
\sup\limits_{S_{1}\subset Q\subset
S_{2}\atop~Q\subseteq\Xi(M(X,T))~is~ compact~and~
connected}\inf\limits_{x\in Q}\Lambda(x,\varphi)\leqslant
P(\Delta(S_{1},S_{2}),\varphi)\leqslant\inf\limits_{x\in
S_{1}}\Lambda(x,\varphi),
\end{equation*}

  \item  $S_{1}\neq\emptyset$ and $S_{1}$ is not contained in a
connected component of $S_{2},$ then

\begin{equation*}
\{x\in X:S_{1}\subset A(\Xi L_{n}x)\subset S_{2}\}=\emptyset .
\end{equation*}
\end{enumerate}
\end{thm}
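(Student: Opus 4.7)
The strategy is to deduce Theorem \ref{thm1.3} from Theorems \ref{thm1.1} and \ref{thm1.2} together with the following topological fact, which I shall use throughout: for every $x\in X$, the accumulation set $A(\Xi L_n x)$ is a nonempty, compact, connected subset of $\Xi(M(X,T))$. Indeed, $\rho(L_n x,L_{n+1}x)=O(1/n)\to 0$ forces $A(L_n x)$ to be nonempty, compact and connected in $M(X)$; its elements are $T$-invariant, so $A(L_n x)\subset M(X,T)$; and the continuous affine map $\Xi$ transports these properties to $A(\Xi L_n x)=\Xi A(L_n x)$. Case (3) is then immediate: any $x\in\Delta(S_1,S_2)$ would give $A(\Xi L_n x)$ as a connected subset of $S_2$ containing $S_1$, forcing $S_1$ to lie in the unique connected component of $S_2$ that meets $A(\Xi L_n x)$, contradicting the hypothesis.

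For Case (2), the lower bound follows by noting that for every compact connected $Q\subset\Xi(M(X,T))$ with $S_1\subset Q\subset S_2$, the trivial inclusion $\Delta_{equ}(Q)\subset\Delta(S_1,S_2)$, monotonicity of topological pressure, and Theorem \ref{thm1.1}(2) combine to give $P(\Delta(S_1,S_2),\varphi)\geq\inf_{y\in Q}\Lambda(y,\varphi)$; the supremum over admissible $Q$ yields the left inequality. The upper bound uses the equally trivial inclusion $\Delta(S_1,S_2)\subset\Delta_{sub}(S_1)$ together with Theorem \ref{thm1.2}(2), which is applicable because $S_1\subset\Xi(M(X,T))$ by hypothesis; this gives $P(\Delta(S_1,S_2),\varphi)\leq\inf_{y\in S_1}\Lambda(y,\varphi)$.

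Case (1) is where the real work lies. The lower bound is again a direct consequence of Theorem \ref{thm1.1}(2) applied to singletons $\{y\}$ with $y\in S_2\cap\Xi(M(X,T))$, each of which is compact connected and satisfies $\Delta_{equ}(\{y\})\subset\Delta(\emptyset,S_2)$; taking the supremum over $y$ yields $P(\Delta(\emptyset,S_2),\varphi)\geq\sup_{y\in S_2}\Lambda(y,\varphi)$. The upper bound is the main obstacle, because the natural covering $\Delta(\emptyset,S_2)\subset\bigcup_{y\in S_2\cap\Xi(M(X,T))}\Delta_{sub}(\{y\})$ is uncountable whereas the topological pressure is only countably stable. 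My plan is to fix $\epsilon>0$, pick a countable dense sequence $\{y_i\}\subset S_2\cap\Xi(M(X,T))$, and replace the uncountable cover by the countable family $E_{i,\epsilon}=\{x:A(\Xi L_n x)\cap B(y_i,\epsilon)\neq\emptyset\}$, which still covers $\Delta(\emptyset,S_2)$ because each point in $A(\Xi L_n x)$ is $\epsilon$-close to some $y_i$. The hardest step is bounding $P(E_{i,\epsilon},\varphi)\leq\sup\{\Lambda(y,\varphi):y\in B(y_i,\epsilon)\cap\Xi(M(X,T))\}$; this will be done by imitating the upper bound construction in the proof of Theorem \ref{thm1.2}, where the uniform separation property yields the upper semicontinuity of $\mu\mapsto h(T,\mu)$ (and hence of $\Lambda(\cdot,\varphi)$), so these local suprema decrease to $\Lambda(y_i,\varphi)$ as $\epsilon\to 0$. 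Countable stability of pressure, followed by $\epsilon\to 0$, then closes the argument.
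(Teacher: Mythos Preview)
For Cases~(2) and~(3), and for the lower bound in Case~(1), your argument coincides with the paper's. The divergence is only in the upper bound of Case~(1), which you call ``the main obstacle'' and ``where the real work lies''. In fact the paper disposes of it in one line (Proposition~4.1): since $A(\Xi L_n x)$ is nonempty for every $x$, one has
\[
\Delta(\emptyset,S_2)=\Delta_{sup}(S_2)\subset\{x\in X:A(\Xi L_n x)\cap S_2\neq\emptyset\}={}^{S_2}G^{*},
\]
and Theorem~3.1(1) (resting on the Pei--Chen bound, Proposition~3.1(i)) gives $P({}^{S_2}G^{*},\varphi)\leq\sup_{y\in S_2}\Lambda(y,\varphi)$ directly. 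No countable cover or limiting argument is needed. Note that the very bound $P(E_{i,\epsilon},\varphi)\leq\sup_{y\in \overline{B(y_i,\epsilon)}}\Lambda(y,\varphi)$ which you flag as ``the hardest step'' is itself just Theorem~3.1(1) applied to a closed ball; once you have that theorem in hand you might as well apply it to $S_2$ (or its closure) in one stroke.

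Your detour also introduces a genuine gap. After countable stability you obtain $P(\Delta(\emptyset,S_2),\varphi)\leq\sup_i\sup_{y\in B(y_i,\epsilon)}\Lambda(y,\varphi)$ and then propose to send $\epsilon\to 0$. Upper semicontinuity of $\Lambda$ gives $\sup_{y\in B(y_i,\epsilon)}\Lambda(y,\varphi)\downarrow\Lambda(y_i,\varphi)$ \emph{for each fixed $i$}, but not uniformly in $i$, so the interchange $\lim_{\epsilon}\sup_i=\sup_i\lim_{\epsilon}$ is unjustified. Concretely, if the dense sequence $\{y_i\}$ accumulates at a boundary point $y^*\in\overline{S_2}\setminus S_2$ where $\Lambda(y^*,\varphi)>\sup_{y\in S_2}\Lambda(y,\varphi)$ (which upper semicontinuity permits), then for every $\epsilon>0$ some ball $B(y_i,\epsilon)$ contains $y^*$ and the outer supremum never drops below $\Lambda(y^*,\varphi)$. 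The paper's direct route via ${}^{S_2}G^{*}$ sidesteps this entirely.
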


\begin{thm}\label{thm1.4}
 $(X,T,\Xi,L_{n},Y)$ as before and
$\varphi\in C(X,\mathbb{R}),$ fix
$S_{1}\subset\Xi(M(X,T)),S_{2}\subset Y,$
\begin{enumerate}
  \item  If $S_{1}=\emptyset,$
then

\begin{equation*}
P(\Delta(S_{1},S_{2}),\varphi)=\sup\limits_{x\in
S_{2}}\Lambda(x,\varphi),
\end{equation*}

  \item If $S_{1}\neq\emptyset$ and $\overline{co}(S_{1})$ the closed convex hull
of $S_{1}$ is contained in a connected component of $S_{2},$ then

\begin{equation*}
P(\Delta(S_{1},S_{2}),\varphi)=\inf\limits_{x\in
S_{1}}\Lambda(x,\varphi),
\end{equation*}

  \item If $S_{1}\neq\emptyset$ and $S_{1}$ is not contained in a connected
component of $S_{2},$ then

\begin{equation*}
\{x\in X:S_{1}\subset A(\Xi L_{n}x)\subset S_{2}\}=\emptyset.
\end{equation*}
\end{enumerate}
\end{thm}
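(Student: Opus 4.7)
The plan is to derive Theorem \ref{thm1.4} from Theorems \ref{thm1.1}--\ref{thm1.3} by exploiting the geometric observation that, under the hypothesis of case (2), the closed convex hull $\overline{co}(S_1)$ is itself a compact, connected subset of $\Xi(M(X,T))$ lying in $S_2$, hence an admissible test set in the variational principle of Theorem \ref{thm1.1}. Cases (1) and (3) are identical in both statement and hypotheses to cases (1) and (3) of Theorem \ref{thm1.3} and require no further argument; the substantive content lies in case (2).

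For case (2), the upper bound $P(\Delta(S_1,S_2),\varphi) \leq \inf_{y \in S_1} \Lambda(y,\varphi)$ is immediate from Theorem \ref{thm1.2}: for every $y \in S_1$ and every $x \in \Delta(S_1,S_2)$ one has $y \in A(\Xi L_n x)$, so $\Delta(S_1,S_2) \subseteq \Delta_{sub}(\{y\})$, giving $P(\Delta(S_1,S_2),\varphi) \leq \Lambda(y,\varphi)$; take infimum in $y$. For the matching lower bound I would set $Q := \overline{co}(S_1)$. Since $\Xi$ is affine and $M(X,T)$ is compact and convex, $\Xi(M(X,T))$ is compact and convex, so $Q \subseteq \Xi(M(X,T))$ and $Q$ is compact, convex (hence connected); the hypothesis places $Q$ inside $S_2$. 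The inclusions $S_1 \subseteq Q \subseteq S_2$ then yield $\Delta_{equ}(Q) \subseteq \Delta(S_1,S_2)$, and Theorem \ref{thm1.1}(2) applied to $C = Q$ gives
\begin{equation*}
P(\Delta(S_1,S_2),\varphi) \;\geq\; P(\Delta_{equ}(Q),\varphi) \;=\; \inf_{y \in \overline{co}(S_1)} \Lambda(y,\varphi).
\end{equation*}

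The argument is thus reduced to the variational identity $\inf_{y \in \overline{co}(S_1)} \Lambda(y,\varphi) = \inf_{y \in S_1} \Lambda(y,\varphi)$. Since $\mu \mapsto h(T,\mu) + \int \varphi \, d\mu$ is affine on $M(X,T)$ and the constraint $\Xi\mu = y$ is preserved by convex combinations, $\Lambda(\cdot,\varphi)$ is concave on $\Xi(M(X,T))$, and concavity immediately yields $\Lambda(y,\varphi) \geq \inf_{S_1} \Lambda(\cdot,\varphi)$ for every $y \in co(S_1)$. The main obstacle will be propagating this estimate from $co(S_1)$ to its closure: this is a density step that requires upper semi-continuity of $\Lambda(\cdot,\varphi)$, which I would derive from upper semi-continuity of the entropy map (available under the uniform separation property for the systems considered here) together with a Berge-type argument applied to the upper hemicontinuous, compact-valued multifunction $y \mapsto \{\mu \in M(X,T) : \Xi\mu = y\}$. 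Once USC is in place, any $y \in \overline{co}(S_1)$ admits $y_n \to y$ with $y_n \in co(S_1)$, so $\Lambda(y,\varphi) \geq \limsup_n \Lambda(y_n,\varphi) \geq \inf_{S_1} \Lambda(\cdot,\varphi)$, closing case (2).
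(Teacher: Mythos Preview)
Your proposal is correct and follows essentially the same route as the paper. The paper's proof of case~(2) also takes $Q=\overline{co}(S_1)$ as the test set and reduces everything to the identity $\inf_{y\in S_1}\Lambda(y,\varphi)=\inf_{y\in \overline{co}(S_1)}\Lambda(y,\varphi)$, which it establishes in two lemmas (Lemmas~\ref{lem4.1} and~\ref{lem4.2}) that are precisely your concavity step and your upper semi-continuity step; the USC of $\Lambda(\cdot,\varphi)$ is obtained there by the same compactness/diagonal argument you describe (choose near-optimal $\mu_n$ with $\Xi\mu_n=y_n$, extract a limit $\mu$, and invoke USC of the entropy map from Pfister--Sullivan), which is your Berge-type argument carried out by hand.
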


\section{Preliminaries}

A remark about notations is presented here for convenience.
\begin{rem}{\rm Let $(X,T)$ be a topological dynamical system.
\begin{itemize}
  \item Let $F\subset M(X)$ be a neighborhood, set
$X_{n,F}:=\{x\in X: L_{n}x\in F\}.$

\item Given $\delta>0$ and $\epsilon>0,$ two points $x$ and $y$
are $(\delta,n,\epsilon)$-separated if
$\#\{j:d(T^{j}x,T^{j}y)>\epsilon,0\leq j\leq n-1\}\geqslant \delta
n.$ A subset $E$ is $(\delta,n,\epsilon)$-separated if any pair of
different points of $E$ are $(\delta,n,\epsilon)$-separated.

\item Let $F\subset M(X)$ be a neighborhood of $\nu,$ and
$\epsilon>0,$ set

$N(F;n,\epsilon):=${\rm maximal cardinality of an}
 $(n,\epsilon)$-{\rm separated  subset of } $X_{n,F};$

$N(F;\delta,n,\epsilon):=${\rm maximal cardinality of
 an }$(\delta,n,\epsilon)$-{\rm separated subset of }$X_{n,F}.$

 \item Given $x\in X,$ set $B_{n}(x,\epsilon):=\{y\in X:
d_n(x,y)\leq\epsilon\},$ where
$d_n(x,y)=\max\limits_{i=0,\cdots,n-1} d(T^ix,T^iy).$

\item
A point $x\in X,\epsilon$-shadows a sequence
$\{x_{0},x_{1},\cdots,x_{k}\}$ if $d(T^{j}x,x_{j})\leq
\epsilon~\forall j=0,1,\cdots k.$

\item  Let
$g:\mathbb{N}\rightarrow\mathbb{N}$ be a given nondecreasing unbound
map with the properties $g(n)<n$ and
$\lim\limits_{n\rightarrow\infty}\frac{g(n)}{n}=0.$ The function $g$
is called blow-up function. Given $x\in X$ and $\epsilon>0$. The
g-blow-up of $B_{n}(x,\epsilon)$ is the closed set
\begin{equation*}
B_{n}(g;x,\epsilon):=\{y\in X:\exists\Lambda\subset\Lambda_{n},\#
(\Lambda_{n}\setminus \Lambda)\leqslant
g(n){\rm~and~}\max\{d(T^{j}x,T^{j}y):j\in\Lambda\}\leq\epsilon\},
\end{equation*}
where $\Lambda_{n}=\{0,1,\cdots,n-1\}.$

\item  { (i)}Given $K\subset M(X,T),$ set $G_{K}:=\{x\in X:A(L_{n}(x))=K\}.$

 {(ii)} Given $K^{'}\subset
\Xi M(X,T),$ set $G_{K^{'}}^{*}:=\{x\in X:A(\Xi L_{n}(x))=K^{'}\}.$

 { (iii)} Given $K\subset M(X),$ set $^{K}G:=\{x\in X:A(L_{n}(x))\bigcap K\neq \emptyset\}.$

 { (iv)} Given $K^{'}\subset Y,$ set $^{K^{'}}G^{*}:=\{x\in X:A(\Xi L_{n}(x))\bigcap K^{'}\neq
\emptyset\}.$

\end{itemize}
}\end{rem}

\begin{defn} \label{defn2.1} {\rm \cite{PfiSul2}}
The dynamical system $(X,d,T)$ has the g-almost product property
with blow-up function g, if there exists a nonincreasing function
$m:\mathbb{R}^{+}\rightarrow\mathbb{N}$, such that for any
$k\in\mathbb{N}$, any $x_{1}\in X,\cdots,x_{k}\in X$, any positive
$\epsilon_{1},\epsilon_{2}\cdots\epsilon_{k}$ and any integers
$n_{1}\geq m(\epsilon_{1}),\cdots ,n_{k}\geq m(\epsilon_{k}),$

\begin{equation*}
\bigcap\limits_{j=1}^{k}T^{-M_{j-1}}B_{n_{j}}(g;x_{j},\epsilon_{j})\neq\emptyset,
\end{equation*}
where $M_{0}=0,M_{i}=n_{1}+n_{2}+\cdots+n_{i},i=1,2,\cdots,k-1.$
\end{defn}

\begin{defn} \label{defn2.2} {\rm \cite{PfiSul2}}
The dynamical system $(X,d,T)$ has uniform separation property if
for any $\eta,$ there exist $\delta^{*}>0$ and $\epsilon^{*}>0$ so
that for $\mu$ ergodic and any neighborhood $F\subset M(X)$ of
$\mu,$ there exists $n_{F,\mu,\eta}^{*},$ such that for $n\geq
n_{F,\mu,\eta}^{*},$ $N(F;\delta^{*},n,\epsilon^{*})\geq
\exp(n(h(T,\mu)-\eta)),$ where $h(T,\mu)$ is the metric entropy of
$\mu.$
\end{defn}

\begin{prop} \label{prop2.1} {\rm \cite{PfiSul2}}
 Suppose that
$(X,d,T)$has the $g$-almost product property. Let
$x_{1},\ldots,x_{k}\in X$ $\epsilon_{1}>0,\ldots,\epsilon_{k}>0$,
and $n_{1}\geq m(\epsilon_{1}),\ldots,n_{k}\geq m(\epsilon_{k})$ be
given. Assume that $L_{n_{j}}(x_{j})\in B(\nu_{j},\zeta_{j}),0\leq
j\leq k.$ Then for any $y\in
\bigcap_{i=1}^{k}T^{-M_{i-1}}B_{n_{i}}(g;x_{i},\epsilon_{i})$ and
any probability measure $\alpha,$

\begin{equation*}
\rho(L_{M_{k}}(y),\alpha)\leq
\sum_{j=1}^{k}\frac{n_{j}}{M_{k}}(\zeta_{j}^{'}+\rho(\nu_{j},\alpha)),
\end{equation*}
where
$M_{j}=n_{1}+\dots+n_{j},\zeta_{j}^{'}=\zeta_{j}+\epsilon_{j}+\frac{g(n_{j})}{n_{j}},
j=1,\dots,k.$
\end{prop}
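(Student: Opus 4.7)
The plan is to decompose the long-orbit empirical measure $L_{M_k}(y)$ as a convex combination of short-block empirical measures and then estimate each block separately, since the hypothesis $y\in \bigcap_{i=1}^{k} T^{-M_{i-1}}B_{n_i}(g;x_i,\epsilon_i)$ controls the shift of $y$ by $M_{j-1}$ over a window of length $n_j$.

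First I would observe the telescoping identity
\begin{equation*}
L_{M_k}(y)=\sum_{j=1}^{k}\frac{n_j}{M_k}\,L_{n_j}\!\left(T^{M_{j-1}}y\right),
\end{equation*}
which follows directly from $M_k=n_1+\cdots+n_k$ and the definition of $L_n$. Because $\rho$ is defined by summing absolute values of integrals against a countable dense family $\{f_k\}\subset C(X,[0,1])$, it is convex in each argument: $\rho(\sum_j\lambda_j\mu_j,\alpha)\leq\sum_j\lambda_j\rho(\mu_j,\alpha)$ whenever $\sum_j\lambda_j=1$. Applying this with weights $n_j/M_k$ reduces the claim to proving, for each $j$, the bound $\rho(L_{n_j}(T^{M_{j-1}}y),\alpha)\leq \zeta_j'+\rho(\nu_j,\alpha)$.

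To obtain the latter I use the triangle inequality $\rho(L_{n_j}(T^{M_{j-1}}y),\alpha)\leq \rho(L_{n_j}(T^{M_{j-1}}y),L_{n_j}(x_j))+\rho(L_{n_j}(x_j),\nu_j)+\rho(\nu_j,\alpha)$. The middle term is $\leq \zeta_j$ by hypothesis. For the first term, the definition of the $g$-blow-up provides a set $\Lambda\subseteq\Lambda_{n_j}$ with $\#(\Lambda_{n_j}\setminus\Lambda)\leq g(n_j)$ on which $d(T^ix_j,T^{i+M_{j-1}}y)\leq \epsilon_j$. Since the excerpt fixes the equivalent metric $d(x,y)=\rho(\delta_x,\delta_y)$ and $\rho\leq 1$ uniformly, splitting the empirical measures into the ``good'' indices in $\Lambda$ and the ``bad'' indices outside and again invoking convexity of $\rho$ yields
\begin{equation*}
\rho\!\left(L_{n_j}(T^{M_{j-1}}y),L_{n_j}(x_j)\right)\leq \frac{(n_j-g(n_j))\epsilon_j+g(n_j)\cdot 1}{n_j}\leq \epsilon_j+\frac{g(n_j)}{n_j}.
\end{equation*}
Adding these contributions gives exactly $\zeta_j'=\zeta_j+\epsilon_j+g(n_j)/n_j$, and the weighted sum over $j$ delivers the stated inequality.

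The argument is essentially bookkeeping with no real obstacle; the only step worth double-checking is the treatment of the exceptional index set produced by the blow-up. This is handled by noting that a crude bound of $1$ on $\rho$ for pairs of Dirac measures at the bad indices, amortized by the factor $1/n_j$, contributes precisely the correction $g(n_j)/n_j$ that is absorbed into $\zeta_j'$. The condition $n_j\geq m(\epsilon_j)$ does not enter this calculation directly — it is only needed to guarantee the nonemptiness of the intersection via the $g$-almost product property, so that the hypothesis on $y$ is nonvacuous.
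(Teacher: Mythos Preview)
Your argument is correct. The paper does not supply its own proof of this proposition; it is quoted from Pfister--Sullivan \cite{PfiSul2}, and your block decomposition together with the convexity of $\rho$ and the crude bound $\rho\le 1$ on the exceptional indices is exactly the standard derivation (the paper in fact reproduces the same blow-up estimate, in simplified form, in the proof of its Proposition~2.4).
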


\begin{defn}{\rm\cite{PesPit}}
Given  $Z\subset X,\varphi\in C(X,\mathbb{R}),$ and  let
$\Gamma_{n}(Z,\epsilon)$ be the collection of all finite or
countable covers of $Z$ by sets of the form $B_{m}(x,\epsilon),$
with $m\geq n$. Let
$S_{n}\varphi(x):=\sum_{i=0}^{n-1}\varphi(T^{i}x).$ Set

\begin{equation*}
M(Z,t,\varphi,n,\epsilon):=\inf_{\mathcal{C}\in\Gamma_{n}(Z,\epsilon)}\left\{\sum_{B_{m}(x,\epsilon)\in
\mathcal{C}}\exp \left(-tm+\sup_{y\in
B_{m}(x,\epsilon)}S_{m}\varphi(y)\right)\right\},
\end{equation*}
and

\begin{equation*}
M(Z,t,\varphi,\epsilon)=\lim_{n\to\infty}M(Z,t,\varphi,n,\epsilon),
\end{equation*}
then there exists a unique number $P(Z,\varphi,\epsilon)$ such that

\begin{equation*}
P(Z,\varphi,\epsilon)=\inf\{t:M(Z,t,\varphi,\epsilon)=0\}=\sup\{t:M(Z,t,\varphi,\epsilon)=\infty\}.
\end{equation*}
$P(Z,\varphi)=\lim_{\epsilon\to0}P(Z,\varphi,\epsilon)$ is called
the topological pressure of $Z$.
\end{defn}
It is obvious that  the following hold:

\begin{enumerate}
  \item $P(Z_{1},\varphi)\leqslant P(Z_{2},\varphi)$ for any
$Z_{1}\subset Z_{2}\subset X$;
  \item $P(Z,\varphi)=\sup\limits_{i}P(Z_{i},\varphi)$ , where $Z=\bigcup_{i}Z_{i}\subset
X.$
\end{enumerate}

\begin{defn}{\rm\cite{CheTasShu}}
 If $Y$ is a vector space and $d'$ is a metric in $Y,$ then  $d'$ is linearly compatible if

{\rm(1)}For all $x_{1},x_{2},y_{1},y_{2}\in Y,$
$d'(x_{1}+x_{2},y_{1}+y_{2})\leqslant
d'(x_{1},y_{1})+d'(x_{2},y_{2}).$

{\rm(2)}For all $x,y\in Y$ and all $\lambda\in\mathbb{R},$
$d'(\lambda x,\lambda y)\leqslant|\lambda|d'(x,y).$
\end{defn}
In fact, if $d'$ is induced by a norm, then $d'$ is linearly
compatible.

Now, we present several propositions about metrics $d'$ and $\rho.$
\begin{prop}
 Assume that $d'$ is a linearly compatible metric in $Y.$ Let
$$V(\Xi,\epsilon):=\sup\limits_{\mu,\nu\in M(X)\atop\rho(\mu,\nu)<\epsilon}d'(\Xi\mu,\Xi\nu),$$
then $V(\Xi,\epsilon)\to0$  as $\epsilon\to0.$
\end{prop}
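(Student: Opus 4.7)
The plan is to reduce this statement to the fact that every continuous function from a compact metric space into a metric space is uniformly continuous. Concretely, I would argue as follows.

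First, I would observe that $(M(X),\rho)$ is compact: the compactness of $X$ together with the Banach--Alaoglu theorem implies that $M(X)$ is compact in the weak* topology, and $\rho$ is a metric inducing this topology by its definition through a countable dense subset of $C(X,[0,1])$. Since $\Xi\colon M(X)\to Y$ is continuous by hypothesis and $d'$ is a metric on $Y$, the Heine--Cantor theorem gives that $\Xi$ is uniformly continuous with respect to $\rho$ and $d'$.

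Given $\eta>0$, uniform continuity supplies some $\epsilon_0>0$ such that $\rho(\mu,\nu)<\epsilon_0$ implies $d'(\Xi\mu,\Xi\nu)<\eta$ for all $\mu,\nu\in M(X)$. For any $0<\epsilon\leq\epsilon_0$, taking the supremum in the definition of $V(\Xi,\epsilon)$ yields $V(\Xi,\epsilon)\leq\eta$. Since $\eta>0$ is arbitrary, $\limsup_{\epsilon\to 0}V(\Xi,\epsilon)=0$, which is the claim.

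There is essentially no obstacle here; the statement is a direct consequence of compactness of $M(X)$ plus continuity of $\Xi$, and neither the affine property of $\Xi$ nor the linear compatibility of $d'$ (conditions (1) and (2) of the linearly compatible definition) is used in this particular proposition. Those hypotheses become relevant in subsequent propositions where one needs to pass estimates through convex combinations of measures or scale differences by explicit constants; here the argument is purely topological.
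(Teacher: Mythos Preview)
Your proof is correct and follows exactly the route the paper indicates: the paper's proof consists of the single sentence ``It is because that $\Xi:M(X)\to Y$ is continuous and $M(X)$ is compact,'' and your argument simply unpacks this into the Heine--Cantor uniform continuity statement. Your observation that neither the affineness of $\Xi$ nor the linear compatibility of $d'$ is actually used here is also accurate.
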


\begin{proof}
It is because that $\Xi:M(X)\to Y$ is continuous and $M(X)$ is
compact.
\end{proof}

\begin{prop}
For any $x\in X,$ any $\epsilon>0,$ there exist sufficiently large
  $N,$ such that for all $n>N, $ we have $\rho(L_{n}x,L_{n+1}x)\leq\epsilon.$
 \end{prop}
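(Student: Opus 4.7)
The plan is to exploit the telescoping relation between successive empirical measures. I would first record the identity
\[
L_{n+1}x \;=\; \frac{n}{n+1}\,L_{n}x + \frac{1}{n+1}\,\delta_{T^{n}x},
\]
which rearranges to $L_{n+1}x - L_{n}x = \frac{1}{n+1}\bigl(\delta_{T^{n}x} - L_{n}x\bigr)$. Thus the two measures differ by an explicit convex-combination weight of order $1/n$, uniformly in $x\in X$.

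Next I would evaluate the defining series of $\rho$ on this difference. For each element $f_{k}$ of the countable dense family in $C(X,[0,1])$ used to define $\rho$, both $f_{k}(T^{n}x)$ and $\int f_{k}\,dL_{n}x$ lie in $[0,1]$, so
\[
\Bigl|\int f_{k}\,dL_{n+1}x - \int f_{k}\,dL_{n}x\Bigr|
\;=\; \frac{1}{n+1}\Bigl|f_{k}(T^{n}x) - \int f_{k}\,dL_{n}x\Bigr|
\;\le\; \frac{1}{n+1}.
\]
Summing over $k$ yields
\[
\rho(L_{n}x,L_{n+1}x) \;=\; \sum_{k\ge 1} \frac{\bigl|\int f_{k}\,dL_{n+1}x - \int f_{k}\,dL_{n}x\bigr|}{2^{k}} \;\le\; \frac{1}{n+1}\sum_{k\ge 1}\frac{1}{2^{k}} \;=\; \frac{1}{n+1}.
\]

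Finally, given $\epsilon>0$ I would choose any integer $N$ with $N \ge 1/\epsilon$; then for every $n>N$ the bound above gives $\rho(L_{n}x,L_{n+1}x)\le 1/(n+1)<\epsilon$. There is essentially no obstacle here: the single observation that drives everything is the convex-combination recursion for $L_{n}$, which forces the increment to be $O(1/n)$ regardless of the point $x$, and this bound even turns out to be uniform in $x\in X$. All subsequent steps are just the direct definition of $\rho$ applied to the dense test family $\{f_{k}\}\subset C(X,[0,1])$.
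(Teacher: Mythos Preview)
Your proof is correct and follows essentially the same approach as the paper: both use the recursion $L_{n+1}x=\frac{n}{n+1}L_nx+\frac{1}{n+1}\delta_{T^nx}$ to obtain $\rho(L_nx,L_{n+1}x)\le\frac{1}{n+1}$. The only cosmetic difference is that the paper recognizes the intermediate sum as $\frac{1}{n+1}\rho(L_nx,\delta_{T^nx})$ and then uses $\rho\le 1$, whereas you bound each term directly and sum the geometric series; the resulting estimate is identical.
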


\begin{proof}
Choose sufficiently large
  $N,$ such that $\frac{1}{N+1}\leq\epsilon.$ Then

\begin{eqnarray*}\begin{split}
 \rho(L_{n}x,L_{n+1}x)&
 =\sum\limits_{k\geq1}2^{-k}|\int f_{k}dL_{n}x-\int f_{k}dL_{n+1}x|\\&
 =\sum\limits_{k\geq1}2^{-k}|\int f_{k}dL_{n}x-\int\frac{n}{n+1} f_{k}dL_{n}x-\int\frac{1}{n+1} f_{k}d\delta_{T^{n}x}|\\&
=\sum\limits_{k\geq1}2^{-k}\frac{1}{n+1}|\int f_{k}dL_{n}x-\int
f_{k}d\delta_{T^{n}x}|\\&
=\frac{1}{n+1}\rho(L_{n}x,\delta_{T^{n}x})\\&\leq\frac{1}{n+1}\leq\frac{1}{N+1}\leq\epsilon.
\end{split}
\end{eqnarray*}
\end{proof}

\begin{prop}
For any $x\in X,$ any $\epsilon>0,$  there exist sufficiently large
  $N,$ such that for all $n>N, $ we have
   $ x'\in B_{n}(g,x,\epsilon)$ implies
  $\rho(L_{n}x,L_{n}x')<2\epsilon.$
  \end{prop}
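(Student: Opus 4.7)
The plan is a direct computation that exploits: (i) the series definition of $\rho$, (ii) the identification $d(x,y)=\rho(\delta_x,\delta_y)$ with the bound $d\leq 1$, and (iii) the asymptotic condition $g(n)/n\to 0$ built into the blow-up function.

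First I would expand $\rho(L_n x,L_n x')$ by its definition as a weighted sum of differences of integrals $\int f_k\, dL_n x - \int f_k\, dL_n x'$. Writing the empirical measures explicitly as averages of Dirac masses and interchanging the two finite sums gives the pointwise bound
\begin{equation*}
\rho(L_n x, L_n x') \;\leq\; \frac{1}{n}\sum_{j=0}^{n-1}\sum_{k\geq 1} 2^{-k}\bigl|f_k(T^j x)-f_k(T^j x')\bigr| \;=\; \frac{1}{n}\sum_{j=0}^{n-1} d(T^j x, T^j x'),
\end{equation*}
where the last equality uses exactly the identification $d(\cdot,\cdot)=\rho(\delta_{\cdot},\delta_{\cdot})$ fixed at the start of the paper.

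Next, I would use the hypothesis $x'\in B_n(g;x,\epsilon)$: there exists $\Lambda\subset\Lambda_n=\{0,\ldots,n-1\}$ with $\#(\Lambda_n\setminus\Lambda)\leq g(n)$ such that $d(T^j x,T^j x')\leq\epsilon$ for all $j\in\Lambda$. Splitting the sum in the previous display along $\Lambda$ and its complement, and using $d\leq 1$ (which holds because $\rho\leq 1$ by the remark following the definition of $\rho$) on the complement, I obtain
\begin{equation*}
\rho(L_n x, L_n x') \;\leq\; \frac{1}{n}\bigl(n\epsilon + g(n)\bigr) \;=\; \epsilon + \frac{g(n)}{n}.
\end{equation*}

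Finally, since $\lim_{n\to\infty} g(n)/n = 0$, there exists $N$ such that $g(n)/n<\epsilon$ for all $n>N$, which yields $\rho(L_n x, L_n x')<2\epsilon$, as required. There is no real obstacle in this proof; the only subtlety is recognizing that the bound $d\leq 1$ is precisely what allows the $g(n)$ "bad" indices to be controlled uniformly in $x,x'$, and that the choice of $N$ depends on $\epsilon$ but not on the point $x$ (since $g$ is given once and for all).
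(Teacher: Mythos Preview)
Your proof is correct and follows essentially the same approach as the paper's own proof. The paper's argument is more terse---it jumps directly from the definition of $\rho$ to the bound $\sum_{k\geq 1}2^{-k}\frac{g(n)}{n}+\frac{n-g(n)}{n}\epsilon\leq 2\epsilon$---whereas you make the intermediate step $\rho(L_nx,L_nx')\leq\frac{1}{n}\sum_{j=0}^{n-1}d(T^jx,T^jx')$ explicit, which clarifies exactly where the identification $d=\rho(\delta_\cdot,\delta_\cdot)$ and the bound $d\leq 1$ enter; both arguments split the orbit indices into good and bad sets and use $g(n)/n\to 0$ in the same way.
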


\begin{proof}
Since $\lim\limits_{n\to\infty}\frac{g(n)}{n}=0,$ we have that for
any $\epsilon>0,$ there exists a sufficiently large $N\in\mathbb{N}$
such that $\frac{g(n)}{n}<\epsilon$ whenever $n>N.$ Then
\begin{eqnarray*}\begin{split}
\rho(L_{n}x,L_{n}x')&=\sum\limits_{k\geq1}2^{-k}|\int
f_{k}dL_{n}x-\int f_kdL_{n}x'|\\&\leq
\sum\limits_{k\geq1}2^{-k}\frac{g(n)}{n}+\frac{n-g(n)}{n}\epsilon\\&\leq2\epsilon.
\end{split}
\end{eqnarray*}
\end{proof}

\begin{prop}{\rm\cite{Win}}\label{prop2.5}
For any $x\in X, A(\Xi L_{n}x)$ is a compact and connected subset of
$Y.$
\end{prop}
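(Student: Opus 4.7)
The plan is to reduce the proposition to a general fact about accumulation sets of ``slowly varying'' sequences in compact metric spaces, and then verify that the sequence $\{\Xi L_n x\}_{n\ge 1}$ satisfies the required slow-variation and boundedness conditions using the propositions just established.

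First I would note that $\Xi L_n x \in \Xi(M(X))$ for every $n$, and since $M(X)$ is compact and $\Xi$ is continuous, $\Xi(M(X))$ is a compact subset of $(Y,d')$. Hence the sequence $\{\Xi L_n x\}$ lies in a fixed compact set, so its accumulation set $A(\Xi L_n x)$ is automatically a closed, and therefore compact, subset of $Y$. This handles compactness.

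For connectedness, the main input is that consecutive terms of the sequence get arbitrarily close: by the proposition showing $\rho(L_n x, L_{n+1} x) \le 1/(n+1)$, we have $\rho(L_n x, L_{n+1} x) \to 0$, and then by the proposition on $V(\Xi,\epsilon)$ (linear compatibility plus continuity of $\Xi$), $d'(\Xi L_n x, \Xi L_{n+1} x) \le V(\Xi, 1/(n+1)) \to 0$. Now I would argue by contradiction: suppose $A(\Xi L_n x) = A_1 \sqcup A_2$ with $A_1, A_2$ nonempty, closed and disjoint. Since $A_1, A_2$ are compact, $\eta := \tfrac{1}{3}d'(A_1,A_2) > 0$. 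Pick $N$ so that $d'(\Xi L_n x, \Xi L_{n+1} x) < \eta$ for all $n \ge N$. Both $A_1$ and $A_2$ contain accumulation points, so the sequence enters the $\eta$-neighborhood of each infinitely often; hence there exist arbitrarily large indices $n \ge N$ with $\Xi L_n x$ within $\eta$ of $A_1$ and $\Xi L_{n+1} x, \ldots$ eventually within $\eta$ of $A_2$. Taking the first index after $n$ where the sequence leaves the $\eta$-neighborhood of $A_1$ produces a one-step jump across a gap of width at least $\eta$, contradicting $d'(\Xi L_n x, \Xi L_{n+1} x) < \eta$. Any accumulation point of such intermediate terms would then lie outside both $A_1$ and $A_2$, contradicting that $A(\Xi L_n x) = A_1 \cup A_2$.

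I do not anticipate a serious obstacle: everything reduces to the two quantitative facts $\rho(L_n x, L_{n+1} x) \to 0$ and $V(\Xi,\epsilon) \to 0$, which are already proved, together with the standard topological lemma that the accumulation set of a sequence in a compact metric space whose consecutive differences vanish is compact and connected. The only mildly delicate point is the bookkeeping in the connectedness argument (making sure that the ``first exit'' index gives a term close to neither $A_1$ nor $A_2$, whose subsequential limits then contradict the assumed decomposition); this is routine once the setup is fixed.
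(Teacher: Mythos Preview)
Your argument is correct and is the standard one for this fact. Note, however, that the paper does not actually prove Proposition~\ref{prop2.5}: it is stated with a citation to Winter's dissertation \cite{Win} and no proof is given. What the paper \emph{does} provide are precisely the two ingredients you identified---Proposition~2.2 ($V(\Xi,\epsilon)\to 0$) and Proposition~2.3 ($\rho(L_n x,L_{n+1}x)\to 0$)---so your reduction is exactly in the spirit of the surrounding text, and your write-up effectively supplies the missing proof.

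One small expository point: the sentence ``produces a one-step jump across a gap of width at least $\eta$, contradicting $d'(\Xi L_n x,\Xi L_{n+1}x)<\eta$'' is not quite the contradiction. The step size being $<\eta$ does not by itself prevent leaving the $\eta$-neighborhood of $A_1$; rather, it forces the first term outside that neighborhood to satisfy $d'(\cdot,A_1)<2\eta$ and hence $d'(\cdot,A_2)>\eta$, so that term lies in neither neighborhood. You do state the correct conclusion in the next sentence (the accumulation point of these intermediate terms lies outside $A_1\cup A_2$), so the logic is sound---just tighten the wording so that the contradiction is clearly with the decomposition $A=A_1\cup A_2$, not directly with the step-size bound.
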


\section{Upper and lower bounds for $P(G_{K'}^{*},\varphi)$}
This section is to show the upper and lower bounds for
$P(G_{K'}^{*},\varphi).$

\begin{prop}\label{prop3.1} {\rm\cite{PeiChe1}}
Let $(X,d,T)$ be a dynamical system,

 \noindent{\rm(i)} if
$K\subseteq M(X,T)$ is a closed subset, then

\begin{equation*}
P(^{K}G,\varphi)\leqslant \sup\left\{h(T,\mu)+\int\varphi
d\mu:\mu\in K\right\}.
\end{equation*}

\noindent {\rm(ii)} if $\mu\in M(X,T),$ then

\begin{equation*}
P(G_{\mu},\varphi)\leqslant h(T,\mu)+\int\varphi d\mu.
\end{equation*}

 \noindent{\rm(iii)}
if $K\subseteq M(X,T)$ is a non-empty closed set, then

\begin{equation*}
P(G_{K},\varphi)\leq\inf\left\{h(T,\mu)+\int\varphi d\mu:\mu\in
K\right\}.
\end{equation*}
\end{prop}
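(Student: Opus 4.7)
The plan is to prove (i) by a covering-and-counting argument on the level-set structure of $^{K}G$ and to deduce (ii) and (iii) from (i) via elementary set inclusions and monotonicity of topological pressure. The crucial technical ingredient (a pressure-theoretic refinement of the Misiurewicz/Katok entropy estimate, now standard in the Pfister--Sullivan framework) is the following lemma: for every $\mu\in M(X,T)$ and every $\eta>0$ there exist an open neighborhood $F_{\mu}\subseteq M(X)$ of $\mu$ and an $\epsilon_{0}(\mu)>0$ such that, for each $\epsilon\in(0,\epsilon_{0}(\mu)]$ and every sufficiently large $n$, any $(n,\epsilon)$-separated set $E_{n}\subseteq X_{n,F_{\mu}}$ satisfies
\[
\sum_{x\in E_{n}}\exp\bigl(S_{n}\varphi(x)\bigr)\leq\exp\Bigl(n\bigl(h(T,\mu)+\textstyle\int\varphi\,d\mu+\eta\bigr)\Bigr).
\]
I expect this lemma to be the hard part: it rests on upper semicontinuity of $\mu\mapsto h(T,\mu)+\int\varphi\,d\mu$ under weak$^{*}$ convergence, typically handled by a Jacobs-type partition-of-unity argument relating $(n,\epsilon)$-separated sets in $X_{n,F_{\mu}}$ to ergodic decompositions of measures near $\mu$.

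Granted this lemma, I would prove (i) as follows. Fix $\eta>0$ and write $a=\sup_{\mu\in K}\{h(T,\mu)+\int\varphi\,d\mu\}$. Applying the lemma at each $\mu\in K$ produces neighborhoods $F_{\mu}$; by compactness of $K\subseteq M(X,T)$ finitely many of them, say $F_{\mu_{1}},\dots,F_{\mu_{k}}$, cover $K$. Choose $\epsilon>0$ smaller than every $\epsilon_{0}(\mu_{i})$ and small enough that the variation of $\varphi$ on any $(m,\epsilon)$-Bowen ball is at most $m\eta$, and let $N$ exceed all the corresponding thresholds. For each $m\geq N$ and each $i$, pick a maximal $(m,\epsilon)$-separated set $E_{m,i}\subseteq X_{m,F_{\mu_{i}}}$; by maximality $\{B_{m}(y,\epsilon):y\in E_{m,i}\}$ covers $X_{m,F_{\mu_{i}}}$. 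Every $x\in{}^{K}G$ has some accumulation point of $(L_{n}x)$ in $K\subseteq\bigcup_{i}F_{\mu_{i}}$, so for any $N'\geq N$ there exist $m\geq N'$ and $i$ with $L_{m}x\in F_{\mu_{i}}$; consequently $\mathcal{C}_{N'}:=\{B_{m}(y,\epsilon):m\geq N',\ i\leq k,\ y\in E_{m,i}\}$ covers $^{K}G$. Setting $t=a+3\eta$, the lemma bounds the associated weighted sum by $k\sum_{m\geq N'}e^{-m\eta}$, which tends to $0$ as $N'\to\infty$. Hence $M({}^{K}G,t,\varphi,\epsilon)=0$ and $P({}^{K}G,\varphi,\epsilon)\leq a+3\eta$; letting $\epsilon\to 0$ and then $\eta\to 0$ yields (i).

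Parts (ii) and (iii) follow painlessly. Since $G_{\mu}\subseteq{}^{\{\mu\}}G$, applying (i) to the closed singleton $K=\{\mu\}$ gives (ii). For (iii), if $x\in G_{K}$ then $A(L_{n}x)=K$ contains every $\mu\in K$, so $G_{K}\subseteq{}^{\{\mu\}}G$ for every $\mu\in K$; applying (i) to each singleton $\{\mu\}$ and taking the infimum over $\mu\in K$ gives the desired bound. Thus the entire proposition is reduced to the pressure-version lemma stated at the outset, which is where all the substantive content lies.
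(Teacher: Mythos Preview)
The paper does not prove Proposition~3.1 at all; it simply cites it from \cite{PeiChe1}. So there is no ``paper's own proof'' to compare against, and your overall strategy --- a covering argument via compactness of $K$, followed by the easy deductions of (ii) and (iii) from (i) via $G_{\mu}\subseteq{}^{\{\mu\}}G$ and $G_{K}\subseteq{}^{\{\mu\}}G$ --- is exactly the standard Bowen-type route and is sound. The reductions to (i) are correct.

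There is, however, a genuine gap in your key lemma. As you have written it, the neighbourhood $F_{\mu}$ is chosen \emph{before} $\epsilon$ and is supposed to work for all $\epsilon\leq\epsilon_{0}(\mu)$. Since $N(F;n,\epsilon)$ increases as $\epsilon\downarrow 0$, your lemma asserts that $\lim_{\epsilon\to 0}\limsup_{n}\frac{1}{n}\log N(F_{\mu};n,\epsilon)\leq h(T,\mu)+\eta$ for a \emph{fixed} neighbourhood $F_{\mu}$. By the Misiurewicz construction, any such growth rate is a lower bound for $h(T,\nu)$ for some limit measure $\nu\in\overline{F_{\mu}}\cap M(X,T)$; thus your lemma forces $\sup_{\nu\in\overline{F_{\mu}}\cap M(X,T)}h(T,\nu)\leq h(T,\mu)+\eta$, which is precisely upper semicontinuity of the entropy map at $\mu$. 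You acknowledge this dependence yourself, but Proposition~3.1 is stated for a general topological dynamical system with \emph{no} such hypothesis (upper semicontinuity only enters later, in Lemma~4.2, under the g-almost product and uniform separation assumptions).

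The repair is to reverse the quantifiers: fix $\epsilon$ (equivalently, a finite Borel partition $\xi$ with $\mathrm{diam}(\xi)<\epsilon$ and $\mu(\partial\xi)=0$), and only then choose $F_{\mu}$ depending on $\xi$. Bowen's partition counting gives, for suitable $F_{\mu}=F_{\mu}(\xi,\eta)$, that the number of $\xi^{n}$-atoms meeting $X_{n,F_{\mu}}$ is at most $e^{n(h(T,\mu,\xi)+\eta)}\leq e^{n(h(T,\mu)+\eta)}$; since an $(n,\epsilon)$-separated set meets each $\xi^{n}$-atom at most once, this yields your weighted bound without any semicontinuity. The compactness cover of $K$ must then be redone for each $\epsilon$, producing $P({}^{K}G,\varphi,\epsilon)\leq a+3\eta$ for every $\epsilon$, and hence in the limit. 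With this reordering your argument goes through in the stated generality.
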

\noindent By the above proposition, we get the following theorem.

\begin{thm}\label{thm3.1}
Let $(X,d,T)$ be a dynamical system,

\noindent {\rm(1)} if $K'\subset Y$ is a closed subset, then

\begin{equation*}
P(^{K'}G^{*},\varphi)\leq\sup\{\Lambda(y,\varphi):y\in K'\},
\end{equation*}

\noindent {\rm(2)} if $K'\subset Y$ is a non-empty closed set, then

\begin{equation*}
P(G_{K'}^{*},\varphi)\leq \inf\{\Lambda(y,\varphi):y\in K'\}.
 \end{equation*}
\end{thm}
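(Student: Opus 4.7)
The strategy is to reduce both inequalities to Proposition~\ref{prop3.1} by pulling the closed set $K'\subset Y$ back through the continuous affine map $\Xi$. The key fact I would record first is the identity
\[A(\Xi L_{n}x)=\Xi(A(L_{n}x))\qquad\text{for every }x\in X.\]
Continuity of $\Xi$ gives the inclusion $\Xi(A(L_{n}x))\subset A(\Xi L_{n}x)$, while the reverse inclusion follows from compactness of $M(X)$: given a subsequence with $\Xi L_{n_{k}}x\to y$, pass to a further subsequence with $L_{n_{k_{j}}}x\to\mu\in M(X,T)$, and observe $y=\Xi\mu\in\Xi(A(L_{n}x))$.

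For part (1), I would set $K:=\Xi^{-1}(K')\cap M(X,T)$, which is closed in $M(X,T)$ since $\Xi$ is continuous. If $x\in{}^{K'}G^{*}$, pick $y\in A(\Xi L_{n}x)\cap K'$; by the displayed identity $y=\Xi\mu$ for some $\mu\in A(L_{n}x)\cap K$, so $x\in {}^{K}G$. Thus ${}^{K'}G^{*}\subset {}^{K}G$, and Proposition~\ref{prop3.1}(i) applied to the closed set $K$ gives
\[P({}^{K'}G^{*},\varphi)\leq P({}^{K}G,\varphi)\leq\sup\left\{h(T,\mu)+\int\varphi d\mu:\mu\in K\right\}.\]
Then I would decompose the right-hand supremum along the fibers $\Xi^{-1}(y)$ for $y\in K'$ to rewrite it as $\sup_{y\in K'}\Lambda(y,\varphi)$; points $y\in K'\setminus\Xi(M(X,T))$ contribute $-\infty$ and are harmless.

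For part (2), for each $y\in K'$ set $K_{y}:=\{\mu\in M(X,T):\Xi\mu=y\}$, again closed by continuity of $\Xi$. If $x\in G_{K'}^{*}$ then $\Xi(A(L_{n}x))=K'$, so in particular $y\in\Xi(A(L_{n}x))$, which means $A(L_{n}x)\cap K_{y}\neq\emptyset$, i.e., $x\in {}^{K_{y}}G$. Applying Proposition~\ref{prop3.1}(i) to $K_{y}$ therefore yields
\[P(G_{K'}^{*},\varphi)\leq P({}^{K_{y}}G,\varphi)\leq\sup\left\{h(T,\mu)+\int\varphi d\mu:\mu\in K_{y}\right\}=\Lambda(y,\varphi).\]
Taking the infimum over $y\in K'$ concludes the proof. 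In the degenerate case where some $y\in K'$ has $K_{y}=\emptyset$, the observation above forces $G_{K'}^{*}=\emptyset$, so the inequality holds vacuously.

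The argument is essentially bookkeeping around $\Xi$, so I do not anticipate a substantial obstacle; the only point I would be careful to justify explicitly is the identity $A(\Xi L_{n}x)=\Xi(A(L_{n}x))$, since the whole reduction hinges on transferring accumulation sets back and forth through $\Xi$.
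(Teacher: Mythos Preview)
Your proposal is correct and follows essentially the same route as the paper: both proofs hinge on the identity $A(\Xi L_{n}x)=\Xi(A(L_{n}x))$ to rewrite ${}^{K'}G^{*}$ as ${}^{\Xi^{-1}K'\cap M(X,T)}G$ and then invoke Proposition~\ref{prop3.1}(i). For part~(2) the paper routes through part~(1) applied to the singleton $\{y\}$, whereas you go directly to ${}^{K_{y}}G$ with $K_{y}=\Xi^{-1}(y)\cap M(X,T)$; since ${}^{\{y\}}G^{*}={}^{K_{y}}G$ by the same identity, this is the same argument.
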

\begin{proof}
(1) If $K'\subset Y$ is a closed subset, then

\begin{eqnarray*}\begin{split}
^{K'}G^{*}&=\left\{x\in X:A(\Xi (L_{n}x))\bigcap
K'\neq\emptyset\right\}\\&=\left\{ x\in X:\Xi (AL_{n}x)\bigcap
K'\neq\emptyset\right\}\\&=\left\{ x\in X: (AL_{n}x)\bigcap
\Xi^{-1}K'\neq\emptyset\right\}\\&=^{\Xi^{-1} K'\bigcap M(X,T)}G.
\end{split}
\end{eqnarray*}

hence,
\begin{eqnarray*}\begin{split} P(^{K'}G^{*},\varphi)&=P(^{\Xi
^{-1}K'\bigcap M(X,T)}G,\varphi)\\&\leq
\sup\left\{h(T,\mu)+\int\varphi d\mu:\mu\in\Xi^{-1}K'\bigcap
M(X,T)\right\}\\&=\sup\{\Lambda(y,\varphi):y\in K'\}.
\end{split}
\end{eqnarray*}

(2) If $K'\nsubseteq \Xi M(X,T),$ then $G_{K'}^{*}=\emptyset.$ In
this case, there exists $y\in K'\setminus \Xi M(X,T)$ such that
$\Lambda(y,\varphi)=-\infty.$ If  $K'\subseteq \Xi M(X,T),$ then for
any $y\in K',$ we have

\begin{eqnarray*}\begin{split}
G_{K'}^{*}&=\{x\in X:A(\Xi L_{n}x)= K'\}\\&\subseteq\left\{x\in
X:A(\Xi L_{n}x)\bigcap \{y\}\neq\emptyset\right\}\\&= ^{\{y\}}G^{*}.
\end{split}
\end{eqnarray*}
Then,

\begin{equation*}
 P(G_{K'}^{*},\varphi)\leq
P(^{\{y\}}G^{*},\varphi)\leq\Lambda(y,\varphi),
\end{equation*}
for any $y\in K'.$ In summary,
\begin{equation*}
P(G_{K'}^{*},\varphi)\leq \inf\{\Lambda(y,\varphi):y\in K'\}.
\end{equation*}
\end{proof}
To obtain the lower bound of $P(G_{K'}^{*},\varphi),$ we need endow
dynamical system with some mild conditions.

\begin{prop}{\rm\cite{PfiSul2}}\label{prop3.2}
 Assume that $(X,d,T)$ has the
g-almost product property and the uniform separation
 property. For any $\eta,$ there exists $\delta^{*}$ and
 $\epsilon^{*}>0$ such that for $\mu\in M(X,T)$ and any neighborhood
 $F\subset M(X)$ of $\mu,$ there exists $n_{F,\mu,\eta}^{*},$ such
 that

\begin{equation*}
N(F;\delta^{*},n,\epsilon^{*})\geq \exp(n(h(T,\mu)-\eta)),
\end{equation*}
whence $n\geqslant
 n_{F,\mu,\eta}^{*}.$ Furthermore, for any $\mu\in M(X,T),$

\begin{equation*}
h(T,\mu)\leqslant\lim\limits_{\epsilon\rightarrow0}\lim\limits_{\delta\rightarrow0}\inf\limits_{F\ni\mu}\liminf\limits_{n\to\infty}\frac{1}{n}\log
N(F;\delta,n,\epsilon).
\end{equation*}
\end{prop}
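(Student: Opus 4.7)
\medskip
\noindent\textbf{Proof proposal.} The plan is to reduce the non-ergodic case of part (1) to the ergodic case (which is exactly the uniform separation property, Definition~2.2) by approximating $\mu$ by a finite convex combination of ergodic measures and then using the g-almost product property to glue together ergodic orbit blocks. Part (2) will follow by taking limits.

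First I would fix $\mu\in M(X,T)$ and $\eta>0$. Using the ergodic decomposition of $\mu$ together with lower semicontinuity/affinity of entropy on $M(X,T)$, I would pick ergodic measures $\nu_{1},\dots,\nu_{k}\in E(X,T)$ and rational weights $\lambda_{1},\dots,\lambda_{k}$ summing to $1$ so that $\tilde\mu:=\sum_{i}\lambda_{i}\nu_{i}$ lies in a prescribed small neighborhood of $\mu$ and $\sum_{i}\lambda_{i}h(T,\nu_{i})>h(T,\mu)-\eta/2$. Let $\delta^{*},\epsilon^{*}$ be the constants furnished by the uniform separation property for tolerance $\eta/2$; these do \emph{not} depend on the measure. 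For each $\nu_{i}$ and each suitably small neighborhood $F_{i}$ of $\nu_{i}$ in $M(X)$, once $n$ is large enough, the uniform separation property produces a $(\delta^{*},\lfloor\lambda_{i}n\rfloor,\epsilon^{*})$-separated set $E_{i}\subset X_{\lfloor\lambda_{i}n\rfloor,F_{i}}$ of cardinality at least $\exp(\lfloor\lambda_{i}n\rfloor(h(T,\nu_{i})-\eta/2))$.

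The main obstacle is the concatenation step: I have to glue one orbit block from each $E_{i}$ into a single orbit whose empirical measure lies in the given neighborhood $F$ of $\mu$, and simultaneously control how much separation survives. For each $k$-tuple $(x_{1},\dots,x_{k})\in E_{1}\times\cdots\times E_{k}$ the g-almost product property (Definition~2.1) guarantees some $y\in\bigcap_{j=1}^{k}T^{-M_{j-1}}B_{n_{j}}(g;x_{j},\epsilon^{*})$ with $n_{j}=\lfloor\lambda_{j}n\rfloor$. By Proposition~2.1, if the $F_{i}$ and $\epsilon^{*}$ are chosen small enough (and the blow-up fraction $g(n_{j})/n_{j}$ is already small), then $\rho(L_{n}(y),\tilde\mu)$ can be made as small as we wish, so $L_{n}(y)\in F$. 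For the separation count, I would fix a selector assigning to each such $y$ one $k$-tuple producing it, and show that if two selected $y,y'$ differ in some coordinate $x_{i}\neq x_{i}'$, then on the block $[M_{i-1},M_{i})$ the pair $(T^{M_{i-1}}y,T^{M_{i-1}}y')$ inherits $\gtrsim(\delta^{*}-2\epsilon^{*}-2g(n_{i})/n_{i})n_{i}$ coordinates on which the iterates are $\epsilon^{*}/2$-apart (away from the $2g(n_{i})$ excised indices). Choosing new constants $\delta^{**}$ and $\epsilon^{**}$ slightly smaller than $\delta^{*}$ and $\epsilon^{*}$, the resulting family of $y$'s is $(\delta^{**},n,\epsilon^{**})$-separated of cardinality $\prod_{i}|E_{i}|\geq\exp(n(\sum_{i}\lambda_{i}h(T,\nu_{i})-\eta/2))\geq\exp(n(h(T,\mu)-\eta))$, which is the required estimate for part (1) (after relabeling $\delta^{**},\epsilon^{**}$ as $\delta^{*},\epsilon^{*}$).

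For part (2), note that part (1) applied with $\eta>0$ arbitrary yields, for the uniform constants $\delta^{*}(\eta),\epsilon^{*}(\eta)$, that
\begin{equation*}
\liminf_{n\to\infty}\frac{1}{n}\log N(F;\delta^{*},n,\epsilon^{*})\geq h(T,\mu)-\eta
\end{equation*}
for every neighborhood $F$ of $\mu$. Taking the infimum over $F\ni\mu$, then sending $\delta\to 0$ (which only makes the separation condition weaker and hence $N$ larger, so the inequality survives passing to $\lim_{\delta\to 0}$), then $\epsilon\to 0$, and finally $\eta\to 0$, produces the displayed inequality. The expected main difficulty is the bookkeeping in the concatenation/separation argument in the second paragraph; the rest is approximation and limit-taking.
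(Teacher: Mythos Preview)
The paper does not prove Proposition~3.2 at all; it is quoted verbatim from Pfister--Sullivan \cite{PfiSul2} and used as a black box. So there is no ``paper's own proof'' to compare against. Your outline is, in fact, essentially the argument Pfister and Sullivan give in that reference: approximate $\mu$ via its ergodic decomposition by a finite convex combination $\sum_i\lambda_i\nu_i$ with $\sum_i\lambda_i h(T,\nu_i)>h(T,\mu)-\eta/2$, pull large $(\delta^*,n_i,\epsilon^*)$-separated sets out of each $X_{n_i,F_i}$ using uniform separation, and glue one block from each via the $g$-almost product property, using Proposition~2.1 to keep the empirical measure inside $F$.

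Two small points are worth tightening. First, you invoke ``lower semicontinuity'' of entropy, but under the standing hypotheses the entropy map is \emph{upper} semicontinuous; what you actually need here is only that $h$ is affine on $M(X,T)$, so that $h(T,\mu)$ equals the integral of $h(T,\cdot)$ over the ergodic decomposition and can therefore be approximated from below by a finite Riemann-type sum. Second, in the separation bookkeeping you should make explicit that the $g$-almost product property requires $n_i\geq m(\epsilon_i)$ and that the proportion of indices lost to the blow-up is at most $g(n_i)/n_i$, so that the surviving separation density is at least $\delta^*-2g(n_i)/n_i$ at scale $\epsilon^*-2\epsilon_i$ on the $i$-th block; this is what forces the final constants $(\delta^{**},\epsilon^{**})$ to be strictly smaller than $(\delta^*,\epsilon^*)$, and one must check that they can still be chosen \emph{independently of} $\mu$ and $F$ (they can, because $\delta^*,\epsilon^*$ from Definition~2.2 already are, and the losses depend only on $g$ and the gluing scale). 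With those adjustments your sketch is sound and matches the original source.
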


\begin{lem}{\rm\cite{PfiSul2}}\label{lem3.1}
If $K'$ is a connected, non-empty and compact subset of $\Xi
(M(X,T)),$ then there exists a
 sequence $\{\alpha''_{1},\alpha''_{2},\cdots\}$ in $K'$ such that

 \begin{equation*}
\overline{\{\alpha''_{j}:j\in\mathbb{N},j>n\}}=K',
 \end{equation*}
 for any
$n\in\mathbb{N},$ and
$\lim\limits_{j\rightarrow\infty}d'(\alpha''_{j},\alpha''_{j+1})=0.$
\end{lem}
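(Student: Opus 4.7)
The plan is to concatenate finite ``traversals'' of $K'$ at finer and finer scales, using compactness of $K'$ to force every tail to be dense and using connectedness of $K'$ to keep consecutive jumps small.

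The central technical ingredient is the classical fact that a connected metric space is $\epsilon$-chainable at every scale: for every $\epsilon>0$ and every pair $x,y\in K'$, there exists a finite sequence $x=z_{0},z_{1},\ldots,z_{m}=y$ in $K'$ with $d'(z_{i},z_{i+1})\leqslant\epsilon$. I would verify this by the standard clopen argument --- the set $R_{\epsilon}(x)$ of points $\epsilon$-chain-reachable from $x$ inside $K'$ is open (any point in the $\epsilon$-ball around a reachable point is itself reachable by appending one step) and closed (any limit of reachable points has a reachable point in its $\epsilon$-neighbourhood, hence is reachable); by connectedness of $K'$, $R_{\epsilon}(x)=K'$.

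With this in hand, for each $k\geqslant 1$ I would use compactness of $K'$ to pick a finite $\tfrac{1}{k}$-dense subset $F_{k}=\{p_{k,1},\ldots,p_{k,N_{k}}\}$ of $K'$, and then use chainability to build a finite block $B_{k}\subset K'$ that starts at $p_{k,1}$, ends at $p_{k,N_{k}}$, visits every point of $F_{k}$, and whose consecutive entries are all at $d'$-distance $\leqslant\tfrac{1}{k}$; concretely, between each $p_{k,i}$ and $p_{k,i+1}$ one inserts a $\tfrac{1}{k}$-chain in $K'$ provided by the lemma. Between the last point of $B_{k}$ and the first point of $B_{k+1}$ one inserts a further $\tfrac{1}{k}$-chain ``bridge'' in $K'$. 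Define $\{\alpha''_{j}\}_{j\geqslant 1}$ as the infinite concatenation $B_{1}\cdot B_{2}\cdot B_{3}\cdots$ (with bridges interleaved).

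Two properties then need to be verified. For consecutive distance, any pair $\alpha''_{j},\alpha''_{j+1}$ lying in $B_{k}$ or in the bridge between $B_{k}$ and $B_{k+1}$ satisfies $d'(\alpha''_{j},\alpha''_{j+1})\leqslant\tfrac{1}{k}$, so $d'(\alpha''_{j},\alpha''_{j+1})\to 0$ as $j\to\infty$. For density of tails, any tail $\{\alpha''_{j}:j>n\}$ contains every block $B_{k}$ with $k\geqslant k_{0}(n)$ for a suitable $k_{0}(n)$, hence contains the $\tfrac{1}{k}$-dense set $F_{k}$ for every such $k$, so its closure equals $K'$. The only substantive step is the $\epsilon$-chainability lemma; the rest is bookkeeping (and handling the first block, which has no incoming bridge but is irrelevant to the asymptotic conditions).
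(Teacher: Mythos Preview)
Your proposal is correct: the $\epsilon$-chainability argument via the clopen set $R_{\epsilon}(x)$ is the standard tool, and the block-plus-bridge concatenation yields both the tail-density and the vanishing consecutive gaps as you describe. Note, however, that the paper does not supply its own proof of this lemma --- it is quoted from Pfister--Sullivan \cite{PfiSul2} without argument --- so there is no in-paper proof to compare against; your construction is essentially the one found in that reference (stated there for subsets of $M(X,T)$ rather than of $\Xi(M(X,T))$, but the argument is purely metric and carries over verbatim).
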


\begin{thm}\label{thm3.2}
 Let $(X,d,T)$ be a dynamical system with the uniform separation and g-almost product property, $\varphi\in C(X,\mathbb{R}).$
If $K'$ is a connected, non-empty and compact subset of $\Xi
 (M(X,T)),$ then

\begin{equation*}
 \inf\limits_{y\in K'}\Lambda(y,\varphi)\leq
 P(G_{K'}^{*},\varphi).
 \end{equation*}
\end{thm}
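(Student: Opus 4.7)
The plan adapts the saturation construction of Pfister--Sullivan to the present deformation setting and extends it from topological entropy to topological pressure. Fix $\eta>0$, and assume $\inf_{y\in K'}\Lambda(y,\varphi)>-\infty$ (otherwise the conclusion is trivial). From Proposition~\ref{prop3.2} take $\delta^{*},\epsilon^{*}>0$, and from Lemma~\ref{lem3.1} extract $\{\alpha''_{j}\}\subset K'$ with $\overline{\{\alpha''_{j}:j>n\}}=K'$ for every $n$ and with $d'(\alpha''_{j},\alpha''_{j+1})\to0$. For each $j$ pick $\mu_{j}\in M(X,T)$ with $\Xi\mu_{j}=\alpha''_{j}$ and $h(T,\mu_{j})+\int\varphi\,d\mu_{j}\geq\Lambda(\alpha''_{j},\varphi)-\eta$; since uniform separation is stated for ergodic measures, I would then replace each $\mu_{j}$ by a nearby convex combination of ergodic measures via the ergodic decomposition and absorb the perturbation into $\eta$.

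Next I would build the basic separated pieces and glue them. For each $j$ select a weak-$*$ neighborhood $F_{j}\ni\mu_{j}$ of radius $\zeta_{j}\downarrow 0$ in $\rho$, a precision $\epsilon_{j}\downarrow 0$, and an integer $n_{j}\geq\max\{n^{*}_{F_{j},\mu_{j},\eta},m(\epsilon_{j})\}$ chosen so large that $M_{j-1}/M_{j}\to 0$, where $M_{j}:=n_{1}+\dots+n_{j}$. Proposition~\ref{prop3.2} furnishes $(\delta^{*},n_{j},\epsilon^{*})$-separated sets $E_{j}\subset X_{n_{j},F_{j}}$ with $\#E_{j}\geq\exp(n_{j}(h(T,\mu_{j})-\eta))$, and by uniform continuity of $\varphi$ we may also assume $\lvert S_{n_{j}}\varphi(x)-n_{j}\!\int\!\varphi\,d\mu_{j}\rvert\leq n_{j}\eta$ for every $x\in E_{j}$. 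The g-almost product property (Definition~\ref{defn2.1}) then lets us pick, for each $\mathbf{x}=(x_{1},x_{2},\ldots)\in\prod_{j}E_{j}$, a tracing point $y(\mathbf{x})\in\bigcap_{j}T^{-M_{j-1}}B_{n_{j}}(g;x_{j},\epsilon_{j})$; set $F:=\{y(\mathbf{x}):\mathbf{x}\in\prod_{j}E_{j}\}$. Combining Proposition~\ref{prop2.1} with the earlier propositions bounding $\rho(L_{n}x,L_{n+1}x)$ and $\rho(L_{n}x,L_{n}x')$ on the g-blow-up, and using $V(\Xi,\cdot)\to 0$, one checks $\Xi L_{M_{j}}y(\mathbf{x})\to\alpha''_{j}$; this together with $d'(\alpha''_{j},\alpha''_{j+1})\to 0$ and Proposition~\ref{prop2.5} forces $A(\Xi L_{n}y(\mathbf{x}))=\overline{\{\alpha''_{j}:j\geq j_{0}\}}=K'$ for every $j_{0}$, so $F\subseteq G_{K'}^{*}$.

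Finally I would extract the pressure bound via a Moran-cover/Frostman-type count. The $(\delta^{*},n_{j},\epsilon^{*})$-separation of $E_{j}$ survives the concatenation up to the g-tracing error, so for $\epsilon<\epsilon^{*}/3$ any Bowen ball $B_{m}(z,\epsilon)$ with $M_{k-1}\leq m<M_{k}$ meets $F$ in branches determined by only $(x_{1},\dots,x_{k-1})$. Comparing the branch-counting lower bound $\prod_{i\leq k}\#E_{i}\geq\exp\!\big(\sum_{i\leq k}n_{i}(h(T,\mu_{i})-\eta)\big)$ against the Birkhoff estimate $S_{M_{k}}\varphi(y(\mathbf{x}))=\sum_{i\leq k}n_{i}\!\int\!\varphi\,d\mu_{i}+O(\eta M_{k})$ inside the defining sum of $M(F,t,\varphi,N,\epsilon)$ shows $M(F,t,\varphi,\epsilon)=\infty$ whenever $t<\inf_{y\in K'}\Lambda(y,\varphi)-3\eta$, hence $P(F,\varphi)\geq\inf_{y\in K'}\Lambda(y,\varphi)-3\eta$. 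Letting $\eta\to 0$ completes the proof. The main obstacle is this last step: the block lengths $n_{j}$ must be chosen so that the size-$g(n_{j})$ exceptional portions and the cross-block separation bookkeeping interact cleanly with the Frostman distribution of mass $\prod_{i\leq k}(\#E_{i})^{-1}$ on each branch. The preliminary ergodic-decomposition step is also delicate since the ergodic components of $\mu_{j}$ need not satisfy $\Xi\nu=\alpha''_{j}$; one handles this by exploiting the freedom left by the density of $\{\alpha''_{j}\}$ in $K'$ and the slow drift $d'(\alpha''_{j},\alpha''_{j+1})\to 0$.
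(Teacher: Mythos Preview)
Your overall architecture is right, but there are two points where the proposal diverges from what actually works.

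First, the ergodic-decomposition detour is unnecessary and, as you suspect, does not close cleanly. Proposition~\ref{prop3.2} already upgrades the uniform separation property from ergodic measures to \emph{all} $\mu\in M(X,T)$: for any invariant $\mu$ and any neighborhood $F\ni\mu$ one has $N(F;\delta^{*},n,\epsilon^{*})\geq\exp(n(h(T,\mu)-\eta))$. So you may pick $\alpha_{k}\in\Xi^{-1}\{\alpha''_{k}\}\cap M(X,T)$ with $h(T,\alpha_{k})+\int\varphi\,d\alpha_{k}\geq\Lambda(\alpha''_{k},\varphi)-\eta/2$ and apply Proposition~\ref{prop3.2} directly; no approximation by ergodic components is needed.

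Second, and more seriously, the growth condition you impose is the \emph{wrong} one for the saturation step. You require $M_{j-1}/M_{j}\to 0$, i.e.\ each new block length $n_{j}$ dwarfs the total preceding length. This does make $\Xi L_{M_{j}}y\approx\alpha''_{j}$ along the subsequence $(M_{j})$, but it does \emph{not} control $\Xi L_{m}y$ for intermediate $m$. Indeed, for $m=M_{j-1}+k$ with $0<k<n_{j}$ one has
\[
\Xi L_{m}y=\frac{M_{j-1}}{m}\,\Xi L_{M_{j-1}}y+\frac{k}{m}\,\Xi L_{k}(T^{M_{j-1}}y),
\]
and when $k\sim n_{j}/2$ the second term dominates while $L_{k}(T^{M_{j-1}}y)$ is an empirical measure over a \emph{partial} segment of the $j$-th block, about which the hypothesis $x_{j}\in X_{n_{j},F_{j}}$ says nothing. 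Hence $A(\Xi L_{n}y)$ may strictly contain $K'$, so your set $F$ need not lie in $G_{K'}^{*}$. The paper repairs this by a \emph{stretching} device: each block $(\Gamma_{k},n_{k},\epsilon_{k})$ is repeated $N_{k}$ times, with $(N_{k})$ chosen so that $n_{k+1}\leq\zeta_{k}\sum_{j\leq k}n_{j}N_{j}$ and $\sum_{j<k}n_{j}N_{j}\leq\zeta_{k}\sum_{j\leq k}n_{j}N_{j}$. These force every individual block length $n'_{j}$ to be small relative to the running total $M_{j-1}$, so that $\rho(L_{m}y,L_{M_{j}}y)$ is small for $M_{j-1}<m\leq M_{j}$ and the whole sequence $(\Xi L_{n}y)$ tracks the stretched sequence $(\alpha''_{M_{k}})$, yielding $A(\Xi L_{n}y)=K'$ exactly. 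The prefix-counting pressure estimate in the paper is then carried out against this stretched structure; your Frostman idea is the right heuristic, but it must be run with $M_{j}/M_{j+1}\to 1$ rather than $M_{j}/M_{j+1}\to 0$.
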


\begin{proof}
 Let $\eta>0$ and $h^{*}:=\inf\limits_{y\in
K'}\Lambda(y,\varphi)-\eta.$ For any $s<h^{*},$ set
$h^{*}-s:=2\delta>0.$ Given sequence $\{\alpha''_{k}\}$ as in Lemma
\ref{lem3.1}, we construct a subset $G$ such that for each $x\in
G,\{\Xi L_{n}x\}$ has the same limit-point set as the sequence
$\{\alpha''_{k}\} ,$ and $P(G,\varphi)\geq h^{*}.$ For
$\frac{\eta}{2}$ and  $\alpha''_{k}\in K',$ there exists $
\alpha_{k}\in\Xi^{-1}\{\alpha''_{k}\}\subseteq M(X,T)$ such that $
\Lambda(\alpha''_{k},\varphi)\leq h(T,\alpha_{k})+\int \varphi
d\alpha_{k} +\frac{\eta}{2}.$ By Proposition \ref{prop3.2}, it is
easy to obtain that for $\frac{\eta}{2}>0,$ there exist
$\delta^{*}>0$ and $\epsilon^{*}>0$ such that for any neighborhood
$F''\subset\Xi(M(X))$ of $\alpha''_{k}$ (choose $
F''=B(\alpha''_{k},\zeta''_{k})$) there exist
$B(\alpha_{k},\zeta_{k})\subseteq\Xi^{-1}F''$ and
$n^{*}_{B(\alpha_{k},\zeta_{k}),\alpha_{k},\frac{\eta}{2}}$
satisfying

\begin{eqnarray}\label{formula4}
N(B(\alpha_{k},\zeta_{k});\delta^{*},n,\epsilon^{*})\geq
\exp(n(h(T,\alpha_{k})-\frac{\eta}{2})),
\end{eqnarray}
whence $n\geq
n_{B(\alpha_{k},\zeta_{k}),\alpha_{k},\frac{\eta}{2}}^{*}$ and
$\zeta_k, \zeta''_k$ will be determined later.

\noindent Choose three strictly decreasing sequences
$\{\zeta_{k}\}_k, \{\zeta''_{k}\}_k$ and $\{\epsilon_{k}\}_k,$ such
that

\noindent{\rm (i)} $\lim\limits_{k}\zeta_{k}=0,$
$\lim\limits_{k}\zeta''_{k}=0$ and $\lim\limits_{k}\epsilon_{k}=0,$

\noindent{\rm (ii)} $\epsilon_{1}<\epsilon^{*}$ and $|\int\varphi
d\alpha_{k}-\int \varphi d\mu|\leq\frac{\delta}{6} ~~\forall\mu\in
B(\alpha_{k},\zeta_{k}+2\epsilon_{k}).$

\noindent From (\ref{formula4}) we deduce the existence of $n_{k}$
and a $(\delta^{*},n_{k},\epsilon^*)$-separated subset
$\Gamma_{k}\subseteq X_{n_{k},B(\alpha_{k},\zeta_{k})}\subseteq
X_{n_{k},\Xi^{-1}B(\alpha''_{k},\zeta''_{k})}$ with

\begin{eqnarray}\label{formula5}
|\Gamma_{k}|\geqslant\exp\left(n_{k}\left(h(T,\alpha_{k})-\frac{\eta}{2}\right)\right).
\end{eqnarray}

\noindent Assume that $n_{k}$ satisfies

\begin{eqnarray}\label{formula6}
\delta^{*}n_{k}>2g(n_{k})+1~ ~{\rm and}~~
\frac{g(n_{k})}{n_{k}}\leqslant\epsilon_{k}
\end{eqnarray}

\noindent The orbit-segments
$\{x,Tx,\cdots,T^{n_{k}-1}x\},x\in\Gamma_{k},$ are the
building-blocks for the construction of the points of $G.$ By
Proposition \ref{prop2.1} and (\ref{formula6}), we obtain

\begin{eqnarray}\label{formula7}
\begin{split}
&x\in\Gamma_{k}~{\rm and}~y\in B_{n_{k}}(g;x,\epsilon_{k})\\&
\Rightarrow \rho( \alpha_{k}, L_{n_{k}}y)\leq
\zeta_{k}+2\epsilon_{k}\\& \Rightarrow d'( \alpha''_{k},\Xi
L_{n_{k}}y)\leq V(\Xi,\zeta_{k}+2\epsilon_{k}).
\end{split}
\end{eqnarray}

\noindent Choose a strictly increasing sequence $\{N_{k}\},$ with
$N_{k}\in\mathbb{N},$ such that

\begin{eqnarray}\label{formula8}
n_{k+1}\leq \zeta_{k}\sum\limits_{j=1}^{k}n_{j}N_{j}
\end{eqnarray}
and

\begin{eqnarray}\label{formula9}
\sum\limits_{j=1}^{k-1}n_{j}N_{j}\leq\zeta_{k}\sum\limits_{j=1}^{k}n_{j}N_{j}.
\end{eqnarray}

\noindent Finally define the (stretched) sequences
$\{n_{j}'\},\{\epsilon_{j}'\}$ and $\{\Gamma_{j}'\},$ by setting

\begin{equation*}
n_{j}':=n_{k}~~\epsilon_{j}':=\epsilon_{k}~~\Gamma_{j}':=\Gamma_{k},
\end{equation*}
for $j=N_{1}+\cdots+N_{k-1}+q$ with $1\leq q\leq N_{k}.$

\begin{equation*}
G_{k}:=\bigcap\limits_{j=1}^{k}(\bigcup\limits_{x_{j}\in\Gamma_{j}'}T^{-M_{j-1}}B_{n_{j}'}(g;x_{j},\epsilon_{j}')),
\end{equation*}
where $M_{j}:=\sum\limits_{l=1}^{j}n_{l}'.$
  $G_{k}$ is a non-empty closed set. Label each set obtained
by developing this formula by the branches of a labeled tree of
height $k.$ A branch is labeled  by $(x_{1},\cdots,x_{k}),$ with
$x_{j}\in\Gamma'_{j}.$ Theorem \ref{thm3.2} is proved by proving
Lemma \ref{lem3.2}.
\end{proof}

\begin{lem}\label{lem3.2}
Let $\epsilon$ be such that $4\epsilon=\epsilon^{*},$ and let
$$G:=\bigcap\limits_{k\geqslant1}G_{k}.$$

\noindent {\rm (i)} Let $x_{j},y_{j}\in \Gamma_{j}'$ with $x_{j}\neq
y_{j}.$ If $x\in B_{n_{j}'}(g;x_{j},\epsilon_{j}')$ and $y\in
B_{n_{j}'}(g;y_{j},\epsilon_{j}'),$ then

\begin{equation*}
\max\{d(T^{m}x,T^{m}y):m=0,\cdots,n'_{j}-1\}>2\epsilon.
\end{equation*}

\noindent {\rm (ii)} $G$ is a closed set, which is the disjoint
union of non-empty closed  sets $G(x_{1},x_{2},\cdots)$ labeled by
$(x_{1},x_{2},\cdots)$ with $x_{j}\in\Gamma_{j}'.$ Two different
sequences label two different sets.

\noindent {\rm (iii)} $G\subset G_{K'}^{*}.$

\noindent {\rm (iv)} $P(G,\varphi)\geq h^{*}.$
\end{lem}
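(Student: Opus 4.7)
For part (i), the strategy is a direct book-keeping that combines the $(\delta^{*},n_{j}',\epsilon^{*})$-separation of $\Gamma_{j}'$ with the $g$-blow-up budget. A point $x \in B_{n_{j}'}(g;x_{j},\epsilon_{j}')$ agrees with $x_{j}$ up to error $\epsilon_{j}'$ on all but at most $g(n_{j}')$ indices of $\{0,\dots,n_{j}'-1\}$, and the same is true for $y$ relative to $y_{j}$. Among the at least $\delta^{*}n_{j}'$ indices where $d(T^{m}x_{j},T^{m}y_{j})>\epsilon^{*}$, condition (\ref{formula6}) leaves at least $\delta^{*}n_{j}' - 2g(n_{j}') \geq 1$ ``good'' indices where the triangle inequality forces $d(T^{m}x,T^{m}y) \geq \epsilon^{*} - 2\epsilon_{j}'$, and this exceeds $2\epsilon$ once we impose $\epsilon_{j}' \leq \epsilon$ (a harmless extra demand on the sequence $\{\epsilon_{k}\}$). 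Part (ii) then follows at once: $G$ is a decreasing intersection of finite unions of closed sets and hence closed; the $g$-almost product property (Definition \ref{defn2.1}), applied to every finite prefix, and compactness yield non-emptiness of each $G(x_{1},x_{2},\dots)$; and two labels that first split at level $j$ produce points that are $d_{n_{j}'}$-separated by more than $2\epsilon$, so the labelled cells are pairwise disjoint.

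For part (iii) the plan is to establish both inclusions $K' \subseteq A(\Xi L_{n}x) \subseteq K'$ for an arbitrary $x \in G$ lying on branch $(x_{1},x_{2},\dots)$. Applying Proposition \ref{prop2.1} to the first $j$ building blocks yields at the boundary $n = M_{j}$ an explicit bound for $\rho(L_{M_{j}}x,\alpha)$ in terms of the $\zeta_{i}'$'s and the distances $\rho(\alpha_{k(i)},\alpha)$, where $k(i)$ indexes the block containing the $i$-th segment. Choosing $j = j_{k} := N_{1} + \cdots + N_{k}$ and using (\ref{formula8})-(\ref{formula9}) to make the contribution of the first $k-1$ blocks vanish relative to $M_{j_{k}}$, the bound forces $d'(\Xi L_{M_{j_{k}}}x, \alpha''_{k}) \to 0$ as $k \to \infty$. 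Since, by Lemma \ref{lem3.1}, $\{\alpha''_{k}\}_{k>n}$ is dense in $K'$ for every $n$, every point of $K'$ is an accumulation point of $\{\Xi L_{n}x\}$, giving $K' \subseteq A(\Xi L_{n}x)$. The reverse inclusion uses the slow-drift estimate $\rho(L_{n}x,L_{n+1}x) \to 0$ from the preliminaries together with $d'(\alpha''_{k},\alpha''_{k+1}) \to 0$ to conclude that for $n$ large $\Xi L_{n}x$ lies in any prescribed neighborhood of $K'$, so accumulation points lie in the closed set $K'$.

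The bulk of the argument is part (iv), for which the plan is a mass distribution argument. Define a Borel probability measure $\mu$ on $G$ by assigning mass $\prod_{j=1}^{k}|\Gamma_{j}'|^{-1}$ to each cylinder $G(x_{1},\dots,x_{k})$ and extending by Kolmogorov consistency. For a Bowen ball $B_{m}(y,\epsilon)$ meeting $G$ with $M_{k-1} \leq m < M_{k}$, part (i) forces all points of $G \cap B_{m}(y,\epsilon)$ to share a common prefix $(x_{1},\dots,x_{k-1})$, so
\begin{equation*}
\mu(B_{m}(y,\epsilon)) \leq \prod_{j=1}^{k-1}|\Gamma_{j}'|^{-1}.
\end{equation*}
Combining (\ref{formula5}) with the choice $h(T,\alpha_{k})+\int\varphi\,d\alpha_{k} \geq \Lambda(\alpha''_{k},\varphi) - \eta/2 \geq h^{*} + \eta/2$, and the closeness condition (ii) on $\{\zeta_{k}\},\{\epsilon_{k}\}$ that forces $|S_{n_{j}'}\varphi(z)/n_{j}' - \int\varphi\,d\alpha_{k(j)}| \leq \delta/6$ for any $z$ on the corresponding branch, the bound upgrades to
\begin{equation*}
\mu(B_{m}(y,\epsilon)) \leq \exp\bigl(-s m + S_{m}\varphi(y) + m\,\sigma(k)\bigr),
\end{equation*}
where $s = h^{*} - 2\delta$ and $\sigma(k) \to 0$. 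The passage from $M_{k-1}$ to $m$ is absorbed by (\ref{formula8}), and the passage from $y$ to points $z \in G \cap B_{m}(y,\epsilon)$ uses the uniform continuity of $\varphi$. The mass distribution principle for topological pressure (at scale $\epsilon$, then $\epsilon \to 0$) then delivers $P(G,\varphi) \geq s$; letting $\delta \downarrow 0$ and $\eta \downarrow 0$ gives $P(G,\varphi) \geq h^{*}$.

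The main obstacle lies in part (iv): converting the multiplicative branch-counting bound for $\mu(B_{m}(y,\epsilon))$ into the additive-exponential form $-sm + S_{m}\varphi(y)$ that drives the pressure capacity, \emph{uniformly} in $y$ and at Bowen scales $m$ that fall strictly inside a block, where the partial Birkhoff average of $\varphi$ along $y$ is a priori not controlled by any $\int\varphi\,d\alpha_{k}$. The stretching sequence $\{N_{k}\}$ was engineered via (\ref{formula8})-(\ref{formula9}) precisely to make these partial-block errors $o(m)$, and carrying this out quantitatively is where the technical work concentrates.
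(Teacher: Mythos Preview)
Your plan is correct, and for parts (i)--(iii) it matches the paper's treatment: the paper defers (i) and (ii) to \cite{PfiSul2}, and for (iii) it proves the single statement $\lim_{k}d'(\Xi L_{M_k}y,\alpha''_{M_k})=0$ (which, together with the slow-drift estimate on $L_n$ and Lemma~\ref{lem3.1}, gives both inclusions you describe).

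For part (iv) your route differs from the paper's. The paper does \emph{not} construct a measure; it argues directly at the level of covers. Given a finite cover $\mathcal{C}\in\Gamma_n(G,\epsilon)$, it replaces each $B_m(x,\epsilon)$ by $B_{M_p}(x,\epsilon)$ with $M_p\le m<M_{p+1}$, associates to the resulting cover $\mathcal{C}'$ a set $\mathcal{W}\subset\overline{\mathcal{W}}_m$ of prefixes, and uses the purely combinatorial inequality
\[
\sum_{k=1}^{m}\frac{|\mathcal{W}\cap\mathcal{W}_k|}{|\mathcal{W}_k|}\ge 1
\]
(valid whenever $\mathcal{W}$ contains a prefix of every word in $\mathcal{W}_m$) to force $\sum_{\mathcal{C}'}\exp[-M_ph^*+\sum_i(S_{n_i'}\varphi({}^{i'}x)+n_i'\delta/6)]\ge 1$. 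The remaining work---comparing $S_m\varphi(z)$ to $\sum_i S_{n_i'}\varphi({}^{i'}x)$ via (\ref{formula7}) and absorbing the partial block $[M_p,m)$ via $n'_{p+1}/M_p\to 0$---is exactly the technical content you identify. Your mass-distribution argument is the measure-theoretic dual of this: the uniform branch measure $\mu$ encodes the same prefix-counting inequality (indeed, $\mu(G(x_1,\dots,x_k))=1/|\mathcal{W}_k|$), and the bound $\mu(B_m(y,\epsilon))\le\prod_{j<k}|\Gamma_j'|^{-1}$ that you derive from (i) is precisely what the paper extracts combinatorially. Both approaches are standard; yours is arguably cleaner conceptually (the Frostman-type principle hides the cover manipulation), while the paper's is more self-contained in that it works straight from the definition of $P(Z,\varphi,\epsilon)$ without invoking an auxiliary pressure distribution principle.
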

\begin{proof}
 (i) and (ii) can be seen in \cite{PfiSul2} for details.

\noindent (iii) Define the stretched sequence $\{\alpha_{m}'\}$ by
$\alpha_{m}':=\alpha_{k}$ if
$\sum\limits_{j=1}^{k-1}n_{j}N_{j}+1\leq m\leq
\sum\limits_{j=1}^{k}n_{j}N_{j}.$ The sequence  $\{\alpha_{m}'\}$
has the same limit-point set as the sequence
$\{\alpha_{k}\},\{\alpha_{m}''\}$ has the same limit-point set as
the sequence $\{\Xi\alpha_{k}\}.$ If
$$\lim\limits_{n\rightarrow\infty}d'(\Xi L_{n}y,\alpha''_{n})=0,$$ then the two sequence
$\{\Xi L_{n}y\}$ and $\{\alpha''_{n}\}$ have the same limit-point
set. Because of (\ref{formula8}) and the definition of
$\{\alpha_{m}''\},$ it is sufficient to show that

\begin{equation*}
\lim\limits_{k\rightarrow\infty}d'(\Xi
L_{M_{k}}(y),\alpha_{M_{k}}'')=0.
\end{equation*}
Suppose that
$\sum\limits_{l=1}^{j}n_{l}N_{l}<M_{k}\leq\sum\limits_{l=1}^{j+1}n_{l}N_{l};$
hence $\alpha_{M_{k}}'=\alpha_{j+1}. M_{k}$ can be written as
$M_{k}=\sum\limits_{l=1}^{j-1}n_{l}N_{l}+n_{j}N_{j}+qn_{j+1},$ where
$1\leq q\leq N_{j+1}.$

Since
\begin{eqnarray*}\begin{split}
\Xi &
(L_{\sum\limits_{l=1}^{j-1}n_{l}N_{l}+n_{j}N_{j}+qn_{j+1}}(y))\\&=
\Xi(\frac{1}{M_{k}}(\sum\limits_{l=1}^{j-1}n_{l}N_{l}L_{\sum\limits_{l=1}^{j-1}n_{l}N_{l}}(y)
+\sum\limits_{i=0}^{N_{j}-1}L_{n_{j}}(T^{\sum_{l=1}^{j-1}n_{l}N_{l}+in_{j}}y)+\sum_{i=0}^{q-1}L_{n_{j+1}}(T^{\sum_{l=1}^{j}n_{l}N_{l}+in_{j+1}}y)))
\\&=\frac{\sum\limits_{l=1}^{j-1}n_{l}N_{l}}{M_{k}}\Xi(L_{\sum\limits_{l=1}^{j-1}n_{l}N_{l}}(y))+\frac{\sum\limits_{i=0}^{N_{j}-1}\Xi L_{n_{j}}(T^{\sum\limits_{l=1}^{j-1}n_{l}N_{l}+in_{j}}y)}{M_{k}}
+\frac{\sum\limits_{i=0}^{q-1}\Xi
L_{n_{j+1}}(T^{{\sum\limits_{l=1}^{j}n_{l}N_{l}+in_{j+1}}}y)}{M_{k}}
\end{split}
\end{eqnarray*}
and

\begin{equation*}
\alpha''_{M_{k}}=\frac{\sum\limits_{l=1}^{j-1}n_{l}N_{l}}{M_{k}}\alpha''_{M_{k}}+\frac{\sum\limits_{i=0}^{N_{j}-1}\alpha''_{M_{k}}}{M_{k}}
+\frac{\sum\limits_{i=0}^{q-1}\alpha''_{M_{k}}}{M_{k}},
\end{equation*}
we have

\begin{eqnarray*}\begin{split}
&d'(\Xi(L_{\sum\limits_{l=1}^{j-1}n_{l}N_{l}+n_{j}N_{j}+qn_{j+1}}(y)),\alpha''_{M_{k}})\\&
\leq
\frac{\sum\limits_{l=1}^{j-1}n_{l}N_{l}}{M_{k}}d'(\Xi(L_{\sum\limits_{l=1}^{j-1}n_{l}N_{l}}(y)),\alpha''_{M_{k}})+\frac{\sum\limits_{i=0}^{N_{j}-1}d'(\Xi
L_{n_{j}}(T^{\sum\limits_{l=1}^{j-1}n_{l}N_{l}+in_{j}}y),\Xi\alpha_{j+1})}{M_{k}}\\&+\frac{\sum\limits_{i=0}^{q-1}d'(\Xi
L_{n_{j+1}}(T^{{\sum\limits_{l=1}^{j}n_{l}N_{l}+in_{j+1}}}y),\Xi\alpha_{j+1})}{M_{k}}\\&\leq\zeta_{j}V(\Xi,1)+d'(\Xi\alpha_{j},\Xi\alpha_{j+1})+V(\Xi,\zeta_{j}+2\epsilon_{j})+V(\Xi,\zeta_{j+1}+2\epsilon_{j+1}).
\end{split}
\end{eqnarray*}
Since
$\lim\limits_{j}\zeta_{j}=0,\lim\limits_{j}\epsilon_{j}=0\Rightarrow
\lim\limits_{j}V(\Xi,\epsilon_{j})=0~~{\rm and}
~\lim\limits_{j}d'(\Xi\alpha_{j},\Xi\alpha_{j+1})=0$ this proves
(iii).

\noindent (iv) From the choice of $\{N_{k}\}$ we can get
$\lim\limits_{n\rightarrow\infty}\frac{M_{n}}{M_{n+1}}=1,$ where
$M_{j}=n'_{1}+\cdots+n'_{j}.$ There exist $n_{k}\in\mathbb{N}$ and a
$(\delta^{*},n_{k},\epsilon^{*})$-separated subset $\Gamma_{k}$ of
$X_{n,B(\alpha_{k},\zeta_{k})}$ such that $$\#\Gamma_{k}\geq
\exp(n_{k}(h(T,\alpha_{k})-\eta/2)).$$ And for any
$^{k}x\in\Gamma_{k},$ we have $L_{n}(^{k}x)\in
B(\alpha_{k},\zeta_{k}).$ So $$|\int\varphi
dL_{n}(^{k}x)-\int\varphi
d\alpha_{k}|=|\frac{1}{n}S_{n}\varphi(^{k}x)-\int\varphi
d\alpha_{k}|\leq\frac{\delta}{6}.$$ Thus

\begin{eqnarray*}\begin{split}
\#\Gamma_{k}&\geq\exp(n_{k}(h(T,\alpha_{k})+\int \varphi
d\alpha_{k}-\eta/2)-S_{n_{k}}\varphi(^{k}x)-n_{k}\frac{\delta}{6})\\
&\geq
\exp(n_{k}h^{*}-S_{n_{k}}\varphi(^{k}x)-n_{k}\frac{\delta}{6}).
\end{split}
\end{eqnarray*}
Since $G$ is a compact set we can just consider finite covers
$\mathcal{C}$ of $G$ with the property that if
$B_{m}(x,\epsilon)\in\mathcal{C},$ then $B_{m}(x,\epsilon)\cap
G\neq\emptyset, \forall B_{m}(x,\epsilon)\in \mathcal{C}.$ For each
 $\mathcal{C}\in\Gamma_{n}(G,\epsilon)$ we define the
cover $\mathcal{C}^{'},$ in which each ball $B_{m}(x,\epsilon)$ is
replaced by  $B_{M_{p}}(x,\epsilon)$ when  $M_{p}\leq m< M_{p+1}.$
Then
\begin{eqnarray*}M(G,s,\varphi,n,\epsilon)&=&\inf_{\mathcal{C}\in
\Gamma_{n}(G,\epsilon)}\sum_{B_{m}(x,\epsilon)\in
\mathcal{C}}\exp(-s m+\sup_{y\in
B_{m}(x,\epsilon)}S_{m}\varphi(y))\\
&\geq& \inf_{\mathcal{C}\in
\Gamma_{n}(G,\epsilon)}\sum_{\substack{B_{M_{p}}(x,\epsilon)\in
\mathcal{C}^{'},\\ {z\in B_{m}(x,\epsilon)\cap
B_{M_{p}}(x,\epsilon)\cap
G}}}\exp(-sm+S_{m}\varphi(z))\end{eqnarray*}

 \noindent Consider a specific $\mathcal{C}^{'}$ and let $m$
be the largest value of  $p$ such that there exists
$B_{M_{p}}(x,\epsilon)\in \mathcal{C}^{'}.$

\noindent Set$$\mathcal{W}_{k}:=\prod_{i=1}^{k}\Gamma_{k}^{'},
\mathcal{\overline{W}}_{m}:=\bigcup_{k=1}^{m}\mathcal{W}_{k}.$$

\noindent Each $z\in B_{M_{p}}(x,\epsilon)\cap G$ corresponds to a
point in $\mathcal{W}_{p}.$ Lemma \ref{lem3.2} (i) implies that this
point is uniquely defined. For $1\leq j\leq k,$ the word
$v=(v_{1},\dots,v_{j})\in \mathcal{W}_{j}$ is a prefix of
$w=(w_{1},\dots,w_{k})\in \mathcal{W}_{k},$ if
$v_{i}=w_{i},i=1,\dots,j.$ Note that each $w\in\mathcal{W}_{k}$ is
the prefix of exactly $|\mathcal{W}_{m}|/|\mathcal{W}_{k}|$ words of
$\mathcal{W}_{m}.$ If $\mathcal{W}\subset
\overline{\mathcal{W}}_{m}$ contains a prefix of each word of
$\mathcal{W}_{m},$ then

\begin{equation*}
\sum_{k=1}^{m}|\mathcal{W}\cap\mathcal{W}_{k}||\mathcal{W}_{m}|
/|\mathcal{W}_{k}|\geq|\mathcal{W}_{m}|.
\end{equation*}
So if $\mathcal{W}$ contains a prefix of each word of
$\mathcal{W}_{m},$ then

\begin{equation*}
\sum_{k=1}^{m}|\mathcal{W}\cap\mathcal{W}_{k}|
/|\mathcal{W}_{k}|\geq1.
\end{equation*}
Since $\mathcal{C}^{'}$ is a cover, each point of $\mathcal{W}_{m}$
has a prefix associated with some
$B_{M_{p}}(x,\epsilon)\in\mathcal{C}^{'}.$ Hence,

\begin{equation*}
|\mathcal{W}_{p}|\geq\exp[M_{p}h^{*}-\sum_{i=1}^{p}(S_{n_{i}^{'}}\varphi(^{i'}x)+n_{i}^{'}\delta/6)],
\end{equation*}
where $^{i'}x\in \Gamma_{i}^{'}.$
 So $$\sum_{B_{M_{p}}(x,\epsilon)\in
\mathcal{C}^{'}}\exp[-M_{p}h^{*}+\sum_{i=1}^{p}(S_{n_{i}^{'}}\varphi(^{i'}x)+n_{i}^{'}\delta/6)]\geq
1.$$

\noindent Next, we want to prove
$M_{p}h^{*}-\sum\limits_{i=1}^{p}(S_{n_{i}^{'}}\varphi(^{i'}x)+n_{i}^{'}\delta/6)-sm+S_{m}\varphi(z)
=m(h^{*}-s)+\sum\limits_{i=1}^{p}(S_{n_{i}^{'}}\varphi(T^{M_{i-1}}z)-S_{n_{i}^{'}}\varphi(^{i'}x)-n_{i}^{'}\delta/6)
+S_{m-M_{p}}\varphi{(T^{M_{p}}z)} -(m-M_{p})h^{*}>0.$
 Since $z\in G,$ by the construction of $G$ we know
there exists a close subset
$$G(x_{1},x_{2},\cdots)=\bigcap_{i=0}^{\infty}T^{-M_{j-1}}B_{n_{j}^{'}}(g;x_{j},\epsilon_{j}^{'}),$$
such that $T^{M_{j-1}}z\in B_{n_{j}^{'}}(g;x_{j},\epsilon_{j}^{'}).$

\noindent By (\ref{formula7}) and $^{i'}x\in\Gamma_i'$ we get
$L_{n_{i}^{'}}(T^{M_{i-1}}z)\in
B(\alpha_{i}^{'},\zeta_{i}^{'}+2\epsilon_{i}^{'})$ and
$L_{n_{i}^{'}}(^{i'}x)\in B(\alpha_{i}^{'},\zeta_{i}^{'}).$ Thus,

\begin{equation*}
\Big|\int\varphi dL_{n_{i}^{'}}(T^{M_{i-1}}z)-\int\varphi
dL_{n_{i}^{'}}(^{i'}x)\Big|
n_{i}^{'}=\Big|S_{n_{i}^{'}}\varphi(T^{M_{i-1}}z)-S_{n_{i}^{'}}\varphi(^{i'}x)\Big|
\leq n_{i}^{'}\delta/2.
\end{equation*}
So,
\begin{eqnarray*}\begin{split}
&M_{p}h^{*}-\sum_{i=1}^{p}(S_{n_{i}^{'}}\varphi(x^{i'})+n_{i}^{'}\delta/6)-sm+S_{m}\varphi(z)\\
 &\geq
m(h^{*}-s)-\sum_{i=1}^{p}2n_{i}^{'}\delta/3-n_{p+1}^{'}(\parallel\varphi\parallel+h^{*})
\\
&\geq 2\delta
M_{p}-M_{p}\delta-n_{p+1}^{'}(\parallel\varphi\parallel+h^{*})\\
&\geq M_{p}\delta-n_{p+1}^{'}(\parallel\varphi\parallel+h^{*}).
\end{split}
\end{eqnarray*}
Since $\lim_{p\to\infty}\frac{n_{p+1}^{'}}{M_{p}}=0,$ it is possible
to choose sufficient large $p$ such that
$M_{p}\delta-n_{p+1}^{'}(\parallel\varphi\parallel+h^{*})>0.$ Then

\begin{equation*}
\sum_{B_{m}(x,\epsilon)\in \mathcal{C}}\exp(-s m+\sup_{y\in
B_{m}(x,\epsilon)}S_{m}\varphi(y))
 \geq\sum_{B_{M_{p}}(x,\epsilon)\in
\mathcal{C}^{'}}\exp[-M_{p}h^{*}+\sum_{i=1}^{p}(S_{n_{i}^{'}}\varphi(^{i'}x)+n_{i}^{'}\delta/6)].
\end{equation*}
It implies  $M(G,s,\varphi,n,\epsilon)\geq1,$ i.e., $s\leq
P(G,\varphi,\epsilon).$ Letting $s\to h^{*},$ we complete the proof
of Lemma \ref{lem3.2}.
\end{proof}

\begin{rem}
The quintuple $(X,T,Y,\Xi,L_{n})$ satisfying g-almost product
property and the uniform separated condition means:

\noindent (a) $X$ is a compact metric space, $T:X\rightarrow X$ is a
continuous map satisfying g-almost product property and the uniform
separated condition.

\noindent (b) $Y$ a vector space, $\Xi:M(X)\rightarrow Y$ is a
continuous and affine map.

\noindent (c)  $L_{n}x:X\rightarrow M(X),$ where
$L_{n}x=\sum\limits_{i=0}^{n-1}\delta_{T^{i}x}.$
\end{rem}
\noindent For $y\in Y,$ set

\begin{equation*}
\Delta(y)=\{x\in X:\{y\}=A(\Xi
L_{n}x)\},\widetilde{\Delta}(y)=\{x\in X:y\in A(\Xi L_{n}x)\}.
\end{equation*}
It is easy to get the following corollary by the above two theorems.

\begin{cro}\label{cor3.1}
For any $\varphi\in C(X,\mathbb{R}),$ if $(X,T,Y,\Xi,L_{n})$
satisfies g-almost product property and the uniform separated
condition, then

\begin{equation*}
P(\Delta(y),\varphi)=P(\widetilde{\Delta}(y),\varphi).
\end{equation*}
\end{cro}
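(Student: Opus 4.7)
The plan is to sandwich both $P(\Delta(y),\varphi)$ and $P(\widetilde{\Delta}(y),\varphi)$ between $\Lambda(y,\varphi)$ and itself by invoking Theorem \ref{thm3.1} and Theorem \ref{thm3.2} with the singleton set $K'=\{y\}$. The obvious inclusion $\Delta(y)\subseteq \widetilde{\Delta}(y)$ gives one direction for free, so the real content is producing a matching upper bound for $\widetilde{\Delta}(y)$ and a matching lower bound for $\Delta(y)$.

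First I would translate the two sets into the notation of the previous theorems. Directly from the definitions, $\widetilde{\Delta}(y)={}^{\{y\}}G^{*}$ and $\Delta(y)=G^{*}_{\{y\}}$. I would then split into cases depending on whether $y\in\Xi(M(X,T))$.

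In the main case $y\in\Xi(M(X,T))$, the singleton $\{y\}$ is a non-empty, compact, connected subset of $\Xi(M(X,T))$, so all hypotheses for the two relevant theorems are met. Applying Theorem \ref{thm3.1}(1) with $K'=\{y\}$ gives
\begin{equation*}
P(\widetilde{\Delta}(y),\varphi)=P({}^{\{y\}}G^{*},\varphi)\leq \sup_{z\in\{y\}}\Lambda(z,\varphi)=\Lambda(y,\varphi),
\end{equation*}
while Theorem \ref{thm3.2} with $K'=\{y\}$ yields
\begin{equation*}
\Lambda(y,\varphi)=\inf_{z\in\{y\}}\Lambda(z,\varphi)\leq P(G^{*}_{\{y\}},\varphi)=P(\Delta(y),\varphi).
\end{equation*}
Chaining these with the monotonicity inequality $P(\Delta(y),\varphi)\leq P(\widetilde{\Delta}(y),\varphi)$ collapses the chain to equality.

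For the remaining case $y\notin\Xi(M(X,T))$, I would use that $A(L_{n}x)\subseteq M(X,T)$ for every $x$ (any weak$^{*}$ limit of empirical measures is $T$-invariant), together with continuity of $\Xi$, to conclude $A(\Xi L_{n}x)=\Xi(A(L_{n}x))\subseteq \Xi(M(X,T))$. Hence $y$ cannot belong to $A(\Xi L_{n}x)$ for any $x$, so both $\widetilde{\Delta}(y)$ and $\Delta(y)$ are empty and the identity is trivial. I do not anticipate a real obstacle here; the only subtle point is verifying that the hypothesis ``$K'$ is a non-empty compact connected subset of $\Xi(M(X,T))$'' of Theorem \ref{thm3.2} is fulfilled by a single point, and that the set-theoretic translations $\widetilde{\Delta}(y)={}^{\{y\}}G^{*}$ and $\Delta(y)=G^{*}_{\{y\}}$ are exactly what those theorems bound.
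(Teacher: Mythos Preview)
Your proposal is correct and is exactly the argument the paper has in mind: the paper simply states that the corollary follows from Theorems~\ref{thm3.1} and~\ref{thm3.2}, and your proof spells this out by applying both with the singleton $K'=\{y\}$ and sandwiching via the inclusion $\Delta(y)\subseteq\widetilde{\Delta}(y)$. Your handling of the degenerate case $y\notin\Xi(M(X,T))$ is a slight bonus, as the paper only ever invokes the corollary when $\Delta(y)\neq\emptyset$.
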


\section{Proof of Theorems}
This section is aim to prove the theorems.

\noindent {\it Proof  of Theorem \ref{thm1.1}}

\noindent (1)It can be obtained by proposition \ref{prop2.5}.

\noindent (2) $C\subset Y$ is a compact and connected subset of
$\Xi(M(X,T)),$ then
$P(\Delta_{equ}(C),\varphi)=P(G_{C}^{*},\varphi)=\inf\limits_{y\in
C}\Lambda(y,\varphi).$

\noindent {\it Proof of Theorem \ref{thm1.2}}

\noindent (1) It is obvious.

\noindent (2) We prove it by presenting several lemmas.

\begin{lem}\label{lem4.1}
Let $(X,T,Y,\Xi,L_{n})$ satisfy g-almost product property and the
uniform separated condition. If $C\subset Y$ and $\varphi\in
C(X,\mathbb{R}),$ then

\begin{equation*}
\inf_{y\in C}\Lambda(y,\varphi)=\inf_{y\in co(C)}\Lambda(y,\varphi),
\end{equation*}
where $co(C)$ is convex hull of $C.$
\end{lem}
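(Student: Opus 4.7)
The inequality $\inf_{y\in co(C)}\Lambda(y,\varphi)\le\inf_{y\in C}\Lambda(y,\varphi)$ is immediate since $C\subset co(C)$, so the real content is the reverse inequality. My plan is to show that for each $y\in co(C)$ and each $\varepsilon>0$ one has $\Lambda(y,\varphi)\ge\inf_{z\in C}\Lambda(z,\varphi)-\varepsilon$, by exhibiting an invariant measure above $y$ built as a convex combination of near-optimal measures above the extreme points of the convex combination representing $y$.

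More precisely, given $y\in co(C)$, write $y=\sum_{i=1}^{k}\lambda_i y_i$ with $y_i\in C$, $\lambda_i\ge 0$, $\sum\lambda_i=1$. If some $y_i\notin\Xi(M(X,T))$ then $\Lambda(y_i,\varphi)=-\infty$, whence $\inf_{z\in C}\Lambda(z,\varphi)=-\infty$ and there is nothing to prove. Otherwise, for each $i$ pick $\mu_i\in M(X,T)$ with $\Xi\mu_i=y_i$ and
\begin{equation*}
h(T,\mu_i)+\int\varphi\,d\mu_i\ \ge\ \Lambda(y_i,\varphi)-\varepsilon.
\end{equation*}
Set $\mu:=\sum_{i=1}^{k}\lambda_i\mu_i\in M(X,T)$. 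Affinity of $\Xi$ gives $\Xi\mu=\sum\lambda_i\Xi\mu_i=\sum\lambda_i y_i=y$, so $\mu$ is a legitimate candidate for the supremum defining $\Lambda(y,\varphi)$.

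The heart of the argument is then the classical affinity of the measure-theoretic entropy on $M(X,T)$, which gives $h(T,\mu)=\sum\lambda_i h(T,\mu_i)$, together with the linearity of integration $\int\varphi\,d\mu=\sum\lambda_i\int\varphi\,d\mu_i$. Combining these,
\begin{equation*}
\Lambda(y,\varphi)\ \ge\ h(T,\mu)+\int\varphi\,d\mu\ =\ \sum_{i=1}^{k}\lambda_i\bigl(h(T,\mu_i)+\int\varphi\,d\mu_i\bigr)\ \ge\ \sum_{i=1}^{k}\lambda_i\bigl(\Lambda(y_i,\varphi)-\varepsilon\bigr)\ \ge\ \inf_{z\in C}\Lambda(z,\varphi)-\varepsilon.
\end{equation*}
Letting $\varepsilon\downarrow 0$ and then taking infimum over $y\in co(C)$ yields the desired inequality.

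I expect no serious obstacle: the g-almost product property and uniform separation play no role in this lemma (they are in force only because the section assumes them), and the only non-trivial ingredient is the affinity of the entropy map $\mu\mapsto h(T,\mu)$ on $M(X,T)$, which is standard. One minor caveat to keep an eye on is the case where $\Lambda(y_i,\varphi)=+\infty$ for some $i$: there one replaces the near-optimal inequality with $h(T,\mu_i)+\int\varphi\,d\mu_i\ge N$ for arbitrary $N$, and the same computation gives $\Lambda(y,\varphi)=+\infty$, so the inequality is again trivial.
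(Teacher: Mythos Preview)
Your proof is correct and follows essentially the same route as the paper: write $y\in co(C)$ as a convex combination $\sum\lambda_i y_i$ of points in $C$, pick near-optimal $\mu_i\in M(X,T)$ above each $y_i$, and use affinity of $\Xi$ and of the entropy map to conclude $\Lambda(y,\varphi)\ge\inf_{z\in C}\Lambda(z,\varphi)-\varepsilon$. Your added attention to the degenerate cases $\Lambda(y_i,\varphi)=\pm\infty$ is a nice touch not spelled out in the paper, and your observation that the g-almost product and uniform separation hypotheses are not actually used here is also correct.
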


\begin{proof} The direction $\geq$ is obvious.
 As to the
other direction, for any $y\in co(C),$ fix $\epsilon>0.$ Since $y\in
co(C),$ there exist $y_{1},y_{2},\cdots,y_{n}\in C$ and
$\lambda_{1},\cdots,\lambda_{n}\geq 0$ with
$\sum\limits_{i}\lambda_{i}=1$ such that
$\sum\limits_{i}\lambda_{i}y_{i}=y.$ For each $y_{i},$ choose
$\mu_{i}\in M(X,T)$ s.t. $\Xi\mu_{i}=y_{i}$ and $h(T,\mu_{i})+\int
\varphi d\mu_{i}\geq\Lambda(y_{i},\varphi)-\epsilon.$ Since the
entropy function is affine and $\sum\limits_{i}\lambda_{i}\mu_{i}\in
M(X,T)$ satisfies
$\Xi(\sum_{i}\lambda_{i}\mu_{i})=\sum_{i}\lambda_{i}\Xi\mu_{i}=\sum_{i}\lambda_{i}y_{i}=y,$
we get
\begin{eqnarray*}\begin{split}
\Lambda(y,\varphi)&=\sup_{\mu\in
M(X,T),\Xi\mu=y}\{h(T,\mu)+\int\varphi d\mu\}\\& \geq
h(T,\sum_{i}\lambda_{i}\mu_{i})+\int \varphi
d(\sum\limits_{i}\lambda_{i}\mu_{i})\\&=\sum_{i}\lambda_{i}(h(T,\mu_{i})+\int\varphi
d\mu_{i})\\&\geq\sum_{i}\lambda_{i}\Lambda(y_{i},\varphi)-\epsilon\\&
\geq \inf_{y\in C}\Lambda(y,\varphi)-\epsilon.
\end{split}
\end{eqnarray*}
Thus,

\begin{equation*}
 \inf_{y\in C}\Lambda(y,\varphi)\leq\inf_{y\in
co(C)}\Lambda(y,\varphi).
\end{equation*}
\end{proof}

\begin{lem}\label{lem4.2}
Let $(X,T,Y,\Xi,L_{n})$ satisfy g-almost product property and the
uniform separated condition. If $C\subset Y$ and $\varphi\in
C(X,\mathbb{R}),$
 then
$$\inf_{y\in C}
\Lambda(y,\varphi)=\inf_{y\in\overline{C}}\Lambda(y,\varphi).$$
\end{lem}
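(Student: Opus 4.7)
\medskip
\noindent\textbf{Proof plan for Lemma 4.2.}

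The inequality $\inf_{y\in\overline{C}}\Lambda(y,\varphi)\leq\inf_{y\in C}\Lambda(y,\varphi)$ is immediate since $C\subset\overline{C}$. The content is the reverse, and my plan is to deduce it from an upper semicontinuity property of the auxiliary function $\Lambda(\cdot,\varphi)$ on $\Xi(M(X,T))$. Note first that if $C\not\subset\Xi(M(X,T))$ then both infima equal $-\infty$ (any $y\in C\setminus\Xi(M(X,T))$ contributes $\Lambda(y,\varphi)=-\infty$), so we may assume $C\subset\Xi(M(X,T))$, and then $\overline{C}\subset\Xi(M(X,T))$ since $\Xi(M(X,T))$ is closed (image of the compact set $M(X,T)$ under the continuous map $\Xi$).

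The key step is the following claim: the function $y\mapsto\Lambda(y,\varphi)$ is upper semicontinuous on $\Xi(M(X,T))$. To prove this, fix $y_n\to y$ in $\Xi(M(X,T))$. For each $n$, use the definition of $\Lambda(y_n,\varphi)$ to pick $\mu_n\in M(X,T)$ with $\Xi\mu_n=y_n$ and
\begin{equation*}
h(T,\mu_n)+\int\varphi\,d\mu_n\geq\Lambda(y_n,\varphi)-\tfrac{1}{n}.
\end{equation*}
By compactness of $M(X,T)$, pass to a subsequence $\mu_{n_k}\to\mu^*\in M(X,T)$. Continuity of $\Xi$ forces $\Xi\mu^*=y$, so $\mu^*$ is a competitor in the supremum defining $\Lambda(y,\varphi)$. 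Upper semicontinuity of the entropy map (valid under the g-almost product and uniform separation hypotheses, as follows from the estimate in Proposition 3.2 together with the asymptotically h-expansive behavior these properties entail in the relevant setting) and continuity of $\mu\mapsto\int\varphi\,d\mu$ then yield
\begin{equation*}
\limsup_{k\to\infty}\Bigl(h(T,\mu_{n_k})+\int\varphi\,d\mu_{n_k}\Bigr)\leq h(T,\mu^*)+\int\varphi\,d\mu^*\leq\Lambda(y,\varphi).
\end{equation*}
Hence $\limsup_k\Lambda(y_{n_k},\varphi)\leq\Lambda(y,\varphi)$, and since every subsequence of $\{\mu_n\}$ admits a further convergent subsequence with the same conclusion, we obtain $\limsup_n\Lambda(y_n,\varphi)\leq\Lambda(y,\varphi)$, as claimed.

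Given this USC property, the lemma follows quickly. Fix $\epsilon>0$ and pick $y\in\overline{C}$ with $\Lambda(y,\varphi)\leq\inf_{y'\in\overline{C}}\Lambda(y',\varphi)+\epsilon/2$. Choose $y_n\in C$ with $y_n\to y$. By the USC just established, $\Lambda(y_n,\varphi)\leq\Lambda(y,\varphi)+\epsilon/2$ for sufficiently large $n$, so
\begin{equation*}
\inf_{y'\in C}\Lambda(y',\varphi)\leq\Lambda(y_n,\varphi)\leq\inf_{y'\in\overline{C}}\Lambda(y',\varphi)+\epsilon.
\end{equation*}
Letting $\epsilon\to 0$ completes the proof. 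The main potential obstacle is the upper semicontinuity of the entropy map; in this framework one has to appeal to the upper bound in Proposition 3.2 (combined with the matching lower bound coming from the uniform separation property) to verify that $\mu\mapsto h(T,\mu)+\int\varphi\,d\mu$ is indeed USC, which is precisely what drives the semicontinuity of $\Lambda(\cdot,\varphi)$ through the compactness argument above.
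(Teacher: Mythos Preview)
Your proposal is correct and follows essentially the same approach as the paper: reduce the nontrivial inequality to upper semicontinuity of $y\mapsto\Lambda(y,\varphi)$, and establish the latter by choosing near-optimal measures $\mu_n$ with $\Xi\mu_n=y_n$, passing to a weak* limit by compactness of $M(X,T)$, and invoking upper semicontinuity of the entropy map together with continuity of $\mu\mapsto\int\varphi\,d\mu$. The only minor difference is that the paper cites Pfister--Sullivan directly for the upper semicontinuity of $\mu\mapsto h(T,\mu)$ under the g-almost product and uniform separation hypotheses, rather than appealing to Proposition~3.2 or asymptotic $h$-expansiveness as you suggest; you should simply cite that result.
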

\begin{proof} $\geq$ is obvious. The other direction follows from the fact
that $y\to\Lambda(y,\varphi)$ is upper semi-continuous. C. Pfister
and W. Sullivan \cite{PfiSul2} proved that the entropy map on
$M(X,T),\mu\rightarrow h(T,\mu),$ is upper semi-continuous under the
g-almost product property and uniformly separation property.
$\forall\gamma>0,\forall y\in\overline{C},\exists \{y_{n}\}\subseteq
C,$ s.t. $ y_{n}\rightarrow y,$ as $n\to \infty$ and there exists $
\mu_{n}\in M(X,T)\bigcap \Xi^{-1}y_{n},$ s.t.
$\Lambda(y_{n},\varphi)\leq h(T,\mu_{n})+\int \varphi
d\mu_{n}+\gamma .$ Assume that $\mu$ is a limit-point of
$\{\mu_{n}\},$ then $\Xi \mu=y.$ So

\begin{eqnarray*}\begin{split}
\limsup\limits_{n\rightarrow\infty}\Lambda(y_{n},\varphi)&\leq
\limsup\limits_{n\rightarrow\infty}h(T,\mu_{n})+\int \varphi
d\mu_{n}+\gamma\\&\leq h(T,\mu)+\int\varphi
d\mu+3\gamma\\&\leq\Lambda(y,\varphi)+3\gamma.
\end{split}
\end{eqnarray*}
The conclusion of Lemma \ref{lem4.2} follows.

\noindent Now, continue the proof of (2).  It suffices to show for
any nonempty $C\subset \Xi(M(X,T)),$

\begin{equation*}
P(\Delta_{equ}(\overline{co(C)}),\varphi)=P(\Delta_{sub}(C),\varphi).
\end{equation*}
Since $\Delta_{equ}(\overline{co(C)})=\{x\in X|A(\Xi
L_{n}x)=\overline{co(C)}\}\subset\{x\in X|C\subset A(\Xi
L_{n}x)\}=\Delta_{sub}(C),$ it is obvious that
$P(\Delta_{sub}(C),\varphi)\geq
P(\Delta_{equ}(\overline{co(C)}),\varphi).$  On the other hand, by
Corollary \ref{cor3.1}, if $\Delta(y)\neq\emptyset,$ then
$P(\Delta(y),\varphi)=P(\widetilde{\Delta}(y),\varphi).$ So for any
$y\in C,$

\begin{eqnarray*}\begin{split}
P(\Delta_{sub}(C),\varphi)&\leq P(\{x\in X|\{y\}\subset A(\Xi
L_{n}x)\},\varphi)\\&=P(\widetilde{\Delta}(y),\varphi)\\&=P(\Delta(y),\varphi)\\&=\Lambda(y,\varphi).
\end{split}
\end{eqnarray*}
Hence,

\begin{eqnarray*}\begin{split}
P(\Delta_{sub}(C),\varphi)&\leq \inf_{y\in C}\Lambda(y,\varphi)\\&=
\inf_{y\in
\overline{co(C)}}\Lambda(y,\varphi)\\&=P(\Delta_{equ}(\overline{co(C)}),\varphi).
\end{split}
\end{eqnarray*}
So,
\begin{equation*}
P(\Delta_{sub}(C),\varphi)=\inf\limits_{y\in C}\sup\limits_{\mu\in
M(X,T)\atop\Xi\mu=y}\{h(T,\mu)+\int\varphi d\mu\}=\inf\limits_{y\in
C} \Lambda(y,\varphi).
\end{equation*}
\end{proof}
\noindent {\it Proof  of Theorem \ref{thm1.3}}

\noindent First, we show the following proposition.
\begin{prop}\label{prop4.1}
 $(X,T,Y,\Xi,L_{n})$ as before and
$\varphi\in C(X,\mathbb{R}).$ If $C\subset Y,$  then

\begin{equation*}
P(\Delta_{sup}(C),\varphi)=P(\Delta_{sup}(C\cap
\Xi(M(X,T))),\varphi)=\sup_{y\in C}\Lambda(y,\varphi).
\end{equation*}
\end{prop}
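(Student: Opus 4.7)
The plan is to establish the two equalities in turn. The first equality, $P(\Delta_{sup}(C),\varphi)=P(\Delta_{sup}(C\cap\Xi(M(X,T))),\varphi)$, is essentially a set-theoretic identity. Since every weak-$*$ accumulation point of $L_{n}x$ is automatically $T$-invariant, we have $A(\Xi L_{n}x)\subseteq\Xi(M(X,T))$ for every $x\in X$, and therefore the conditions $A(\Xi L_{n}x)\subseteq C$ and $A(\Xi L_{n}x)\subseteq C\cap\Xi(M(X,T))$ are equivalent. Consequently $\Delta_{sup}(C)=\Delta_{sup}(C\cap\Xi(M(X,T)))$ as subsets of $X$, and their pressures trivially agree.

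For the second equality I would prove the two inequalities separately. For the lower bound, fix any $y\in C\cap\Xi(M(X,T))$. The singleton $\{y\}$ is a compact connected subset of $\Xi(M(X,T))$, so Theorem~\ref{thm1.1}(2) applied to $\{y\}$ gives $P(\Delta_{equ}(\{y\}),\varphi)=\Lambda(y,\varphi)$. Since any $x$ with $A(\Xi L_{n}x)=\{y\}\subseteq C$ lies in $\Delta_{sup}(C)$, we have $\Delta_{equ}(\{y\})\subseteq\Delta_{sup}(C)$; monotonicity of topological pressure then yields $P(\Delta_{sup}(C),\varphi)\geq\Lambda(y,\varphi)$. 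Taking the supremum over $y\in C$ (and noting that $\Lambda(\cdot,\varphi)\equiv-\infty$ off $\Xi(M(X,T))$, so $y\in C\setminus\Xi(M(X,T))$ contributes nothing) delivers the lower bound.

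For the upper bound, set $K':=\overline{C\cap\Xi(M(X,T))}\subseteq Y$, a closed set. Because $A(\Xi L_{n}x)$ is nonempty and contained in $C\cap\Xi(M(X,T))\subseteq K'$ whenever $x\in\Delta_{sup}(C)$, we have $\Delta_{sup}(C)\subseteq{}^{K'}G^{*}$. Applying Theorem~\ref{thm3.1}(1) to the closed set $K'$ yields $P(\Delta_{sup}(C),\varphi)\leq\sup_{y\in K'}\Lambda(y,\varphi)$.

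The main obstacle is the final identification $\sup_{y\in K'}\Lambda(y,\varphi)=\sup_{y\in C}\Lambda(y,\varphi)$. Upper semi-continuity of $\Lambda$ (as exploited in Lemma~\ref{lem4.2}) preserves infima under closure but not suprema, so this step requires care. I would close the gap either by reducing to the case when $C\cap\Xi(M(X,T))$ is already closed (so that $K'=C\cap\Xi(M(X,T))$ and the two suprema coincide trivially), or, in the general case, by an approximation argument exploiting the variational characterization of $\Lambda$ together with the density properties supplied by the g-almost product and uniform separation hypotheses to argue that any candidate point of $K'\setminus C$ at which $\Lambda$ exceeds $\sup_{y\in C}\Lambda(y,\varphi)$ cannot actually be realized as an accumulation point of $\Xi L_{n}x$ for some $x\in\Delta_{sup}(C)$.
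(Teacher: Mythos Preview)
Your treatment of the first equality and of the lower bound is correct and coincides with the paper's: the paper also reduces to a single point $y'\in C\cap\Xi(M(X,T))$ and uses $\Lambda(y',\varphi)=P(G^*_{\{y'\}},\varphi)\le P(\Delta_{sup}(C),\varphi)$.

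For the upper bound the paper proceeds differently and more directly. It simply writes
\[
P(\Delta_{sup}(C),\varphi)\le P({}^{C}G^{*},\varphi)\le \sup_{y\in C}\Lambda(y,\varphi),
\]
invoking Theorem~\ref{thm3.1}(1) with $K'=C$ itself, \emph{without} passing to the closure. This one line already gives the desired inequality and entirely avoids the identification $\sup_{y\in K'}\Lambda=\sup_{y\in C}\Lambda$ that you flag as the main obstacle. You are correct that Theorem~\ref{thm3.1}(1) is stated for closed $K'$, so the paper is formally loose here; but the chain in its proof, ${}^{K'}G^{*}={}^{\Xi^{-1}K'\cap M(X,T)}G$ followed by Proposition~\ref{prop3.1}(i), does not actually use closedness until the very last step, and the paper evidently treats that step as valid without it.

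By contrast, your detour through $K'=\overline{C\cap\Xi(M(X,T))}$ introduces a real difficulty rather than removing one: upper semi-continuity of $\Lambda(\cdot,\varphi)$ controls infima under closure (which is exactly what Lemma~\ref{lem4.2} exploits) but goes the wrong way for suprema, so $\sup_{y\in K'}\Lambda(y,\varphi)$ can in principle strictly exceed $\sup_{y\in C}\Lambda(y,\varphi)$. Neither of your proposed remedies closes this gap: restricting to closed $C$ changes the proposition, and the ``approximation argument exploiting the variational characterization'' is not a proof as written. The cleanest repair is to abandon the closure step and follow the paper's direct route, applying the upper bound for ${}^{K}G$ with $K=\Xi^{-1}C\cap M(X,T)$.
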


\begin{proof}
 $P(\Delta_{sup}(C),\varphi)\leq P(^{C}G^{*},\varphi)\leq
\sup_{y\in C}\Lambda(y,\varphi).$

\noindent On the other hand, $\forall \epsilon
>0,\exists y'\in C\cap\Xi(M(X,T)),$ s.t. $\sup_{y\in C}\Lambda(y,\varphi)\leq
\Lambda(y',\varphi)+\epsilon,$ and $\Lambda(y',\varphi)=
P(G_{\{y'\}}^{*},\varphi)\leq P(\Delta_{sup}(C),\varphi).$

\noindent So,

\begin{equation*}
P(\Delta_{sup}(C),\varphi)+\epsilon\geq\sup_{y\in
C}\Lambda(y,\varphi).
\end{equation*}
Thus,

\begin{equation*}
P(\Delta_{sup}(C),\varphi)=\sup_{y\in C}\Lambda(y,\varphi).
\end{equation*}
\end{proof}
Now, We continue the proof of Theorem \ref{thm1.3}.

\noindent (1) It comes from Proposition \ref{prop4.1}.

\noindent (2) Given $S_{1}\subseteq Q\subseteq
S_{2},Q\subseteq\Xi(M(X,T))$
 is compact and connected.

 \noindent Since $\Delta_{equ}(Q)=\{x\in X|A(\Xi L_{n}x)=Q\}\subset\{S_{1}\subset A(\Xi L_{n}x)\subset
 S_{2}\},$ we get

\begin{equation*}
P(\Delta(S_{1},S_{2}),\varphi)\geq P
(\Delta_{equ}(Q),\varphi)=\inf_{x\in
 Q}\Lambda(x,\varphi).
 \end{equation*}
Since $Q$ is arbitrary, we obtain

\begin{equation*}
 P(\Delta(S_{1},S_{2}),\varphi)\geq \sup_{S_{1}\subset
Q\subset S_{2}\atop Q\subseteq\Xi(M(X,T)) ~is ~ compact~and ~
connected }\inf_{x\in
 Q}\Lambda(x,\varphi).
 \end{equation*}
As to the other inequality, observe
$$\Delta(S_{1},S_{2})\subset\Delta(S_{1},Y)=\Delta_{sub}(S_{1}).$$

\noindent (3)  It is obvious.

\noindent {\it Proof  of Theorem \ref{thm1.4}}

\noindent (1) It follows from Proposition \ref{prop4.1}.

\noindent (2) Combining the fact $S_{1}\subseteq
\overline{co}(S_{1})$ and $\overline{co}(S_{1})$ is a compact and
connected subset of $S_{2},$ and Theorem \ref{thm1.3}, we obtain
\begin{eqnarray*}\begin{split}
\inf_{y\in
S_{1}}\Lambda(y,\varphi)&=\inf_{y\in\overline{co}(S_{1})}\Lambda(y,\varphi)\\&\leq\sup_{S_{1}\subseteq
Q\subseteq S_{2}\atop Q~is ~compact~and~connected}\inf_{y\in
Q}\Lambda(y,\varphi)\\&\leq
P(\Delta(S_{1},S_{2}),\varphi)\\&\leq\inf_{y\in
S_{1}}\Lambda(y,\varphi).
\end{split}
\end{eqnarray*}

\noindent (3) It is obvious.

\section{Some applications}
In the section,  firstly, we present some spetra induced by
different deformations. Secondly, we use BS-dimension to describe
some level sets. Thirdly, the relative multifractal spectrum of
ergodic averages are discussed. At last, symbolic space and iterated
function systems are investigated.
\subsection{Some spectra}
 Different spectra are induced by different deformations
$(Y,\Xi)\cite{Ols2}.$

\begin{itemize}
  \item The spectrum of the historic set  of ergodic
  averages. Let $\varphi:X\rightarrow\mathbb{R}$ be continuous and
  define $\Xi:M(X)\rightarrow\mathbb{R}$ by $\Xi:\mu\mapsto\int\varphi
  d\mu.$ In this case we obtain for $S_{1},S_{2}\subset\mathbb{R},$

\begin{equation*}
\Delta(S_{1},S_{2})=\left\{x\in X:S_{1}\subset
A\left(\frac{1}{n}\sum_{k=0}^{n-1}\varphi(T^{k}x)\right)\subset
  S_{2}\right\}.
\end{equation*}

  \item The spectrum of the historic set of  empirical
  measures. Define $\Xi:M(X)\rightarrow M(X) $by
  $$\Xi:\mu\rightarrow\mu.$$ In this case we obtain for $S_{1},S_{2}\subset
  \mathbb{R},$

\begin{equation*}
\Delta(S_{1},S_{2})=\{x\in X:S_{1}\subset
A(\frac{1}{n}\sum_{k=0}^{n-1}\delta_{T^{k}x})\subset S_{2}\}.
\end{equation*}

  \item The spectrum of the historic set of   local
  Lyapunov exponents. Let $X$ be a differentiable manifold and
$T:X\to
  X$ be a $C^{1}$ map. The local  Lyapunov exponents of $T$ at the
  point $x$ is defined by $\chi(x)=\lim_{n\rightarrow\infty}\frac{1}{n}\log|(DT^{n})(x)|.$
Define $\Xi:M(X)\rightarrow\mathbb{R}$ by $$\mu\rightarrow\int
DTd\mu.$$ In this case we obtain for $S_{1},S_{2}\subseteq
\mathbb{R},$

\begin{equation*}
\Delta(S_{1},S_{2})=\{x\in X:S_{1}\subset
A(\frac{1}{n}\log|(DT^{n})(x)|)\subset S_{2}\}.
\end{equation*}

\item The mixed spectrum of the historic set  of ergodic
averages of arbitrary families of continuous functions. Assume that
the family of maps
$(M(X)\rightarrow\mathbb{R}:\mu\mapsto\int\varphi_{i}d\mu)_{i\in I}$
is totally bounded. Define $\Xi:M(X)\rightarrow l^{\infty}(I)$ by
$$\Xi:\mu\mapsto(\int \varphi_{i}d\mu)_{i\in I}.$$ In this case
we obtain for $S_{1},S_{2}\subset
l^{\infty}(I),$$$\Delta(S_{1},S_{2})=\{x\in X:S_{1}\subset
A((\frac{1}{n}\sum_{k=0}^{n-1}\varphi_{i}(T^{k}x))_{i\in I})\subset
S_{2}\}$$
\end{itemize}
We only consider some (not all) spectrum above and obtain several
corollaries as examples.  It is  easy to get Corollary \ref{cor5.1}
and Corollary \ref{cor5.3}. So we omit the proof.

\begin{cro}\label{cor5.1}
 $(X,T,L_{n})$ as before. Let $Y=\mathbb{R}$ and
$\phi_{j}:X\rightarrow \mathbb{R}$
 be a family of continuous functions. Assume the family of maps $(\Xi_{j}:M(X)\rightarrow\mathbb{R}:\mu\mapsto\int \phi_{j}d\mu)_{j\in
 I}$ is totally bounded. Fix $S_{1},S_{2}\subset l^{\infty}(I),\psi\in C(X,\mathbb{R}).$
 \begin{enumerate}
     \item
     If $S_{1}=\emptyset$ and $S_{2}$ is closed and convex, then
\begin{equation*}
     P(\{x\in X:A((\frac{1}{n}\sum_{k=0}^{n-1}\phi_{j}(T^{k}x))_{j\in I})\}\subset S_{2} ,\psi)=\sup_{x\in S_{2}}\Lambda(x,\psi).
\end{equation*}
      \item
     If $S_{1}\neq\emptyset$ and $\overline{co}(S_{1})$ is contained in a connected component of $S_{2},$ then
\begin{equation*}
     P(\{x\in X:S_{1}\subset A((\frac{1}{n}\sum_{k=0}^{n-1}\phi_{j}(T^{k}x))_{j\in I})\},\psi)=\inf_{x\in S_{1}}\Lambda(x,\psi).
\end{equation*}
        \item
     If $S_{1}\neq\emptyset$ and $\overline{co}(S_{1})$ is not contained in a connected component of $S_{2},$ then
\begin{equation*}
  \{x\in X:S_{1}\subset A((\frac{1}{n}\sum_{k=0}^{n-1}\phi_{j}(T^{k}x))_{j\in I})\}=\emptyset.
\end{equation*}
  \end{enumerate}
\end{cro}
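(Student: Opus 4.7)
The plan is to derive Corollary \ref{cor5.1} as a direct specialization of Theorem \ref{thm1.4} to the deformation
\[
\Xi \colon M(X)\to l^{\infty}(I),\qquad \Xi\mu = \Bigl(\int \phi_{j}\,d\mu\Bigr)_{j\in I},
\]
so the body of the argument is really a verification that the tuple $(X,T,\Xi,L_{n},l^{\infty}(I))$ fits into the framework used in Theorems \ref{thm1.1}--\ref{thm1.4}. Once that is established, each of the three items in the corollary reads off from the corresponding item of Theorem \ref{thm1.4} by observing that
\[
A\bigl(\Xi L_{n}x\bigr)=A\Bigl(\bigl(\tfrac{1}{n}\sum_{k=0}^{n-1}\phi_{j}(T^{k}x)\bigr)_{j\in I}\Bigr),
\]
which is immediate from $\Xi L_{n}x=(\int\phi_{j}\,dL_{n}x)_{j\in I}=(\tfrac{1}{n}\sum_{k=0}^{n-1}\phi_{j}(T^{k}x))_{j\in I}$.

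First I would check the three structural requirements imposed on the target space. The space $Y=l^{\infty}(I)$ is a (real) vector space, and its sup-norm metric $d'(u,v)=\sup_{j\in I}|u_{j}-v_{j}|$ is induced by a norm, hence linearly compatible in the sense of the definition given before Proposition 2.4. Next, $\Xi$ is affine because each coordinate functional $\mu\mapsto\int\phi_{j}\,d\mu$ is affine. The dynamical hypotheses (g-almost product property and uniform separation) are imposed on $(X,T)$ directly and do not depend on the choice of deformation, so they are inherited from the ambient assumption.

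The one point that is not completely formal is the continuity of $\Xi$ from $(M(X),\rho)$ into $(l^{\infty}(I),d')$, and I expect this to be the main (though still routine) obstacle. Pointwise, $\mu_{n}\to\mu$ weakly gives $\int\phi_{j}\,d\mu_{n}\to\int\phi_{j}\,d\mu$ for each fixed $j$, but for continuity into $l^{\infty}(I)$ one needs this convergence to be uniform in $j\in I$. This is exactly what the total boundedness of the family $\{\Xi_{j}:j\in I\}\subset C(M(X),\mathbb{R})$ purchases: total boundedness in $C(M(X),\mathbb{R})$ (with the sup norm on the compact space $M(X)$) is equivalent, via Arzel\`a--Ascoli, to equicontinuity together with pointwise boundedness, and an $\varepsilon/3$ argument combining equicontinuity of the family with weak convergence of $\mu_{n}$ to $\mu$ then yields $\sup_{j}|\Xi_{j}\mu_{n}-\Xi_{j}\mu|\to 0$, i.e.\ $d'(\Xi\mu_{n},\Xi\mu)\to 0$.

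With these verifications in hand, the three conclusions follow instantly. In case (1), $S_{1}=\emptyset$ and the set in question is $\Delta_{sup}(S_{2})=\Delta(\emptyset,S_{2})$, so Theorem \ref{thm1.4}(1) (equivalently, Proposition \ref{prop4.1}) gives the value $\sup_{x\in S_{2}}\Lambda(x,\psi)$. In case (2), the hypothesis that $\overline{co}(S_{1})$ lies in a single connected component of $S_{2}$ is the exact hypothesis of Theorem \ref{thm1.4}(2), which yields $\inf_{x\in S_{1}}\Lambda(x,\psi)$. In case (3), the non-containment hypothesis triggers Theorem \ref{thm1.4}(3), producing the empty set. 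No further computation is needed, and the statement of Corollary \ref{cor5.1} is simply the translation of Theorem \ref{thm1.4} through the specific deformation $\Xi$.
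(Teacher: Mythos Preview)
Your proposal is correct and matches the paper's approach exactly: the paper simply declares that Corollary \ref{cor5.1} is ``easy to get'' from the main theorems and omits the proof, and your argument supplies precisely the missing verification---that the deformation $\Xi\mu=(\int\phi_{j}\,d\mu)_{j\in I}$ into $l^{\infty}(I)$ is affine and continuous (the latter being where the total boundedness hypothesis is used, via equicontinuity)---after which the three items are read off from Theorem \ref{thm1.4}.
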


\begin{cro}\label{cor5.3}
$(X,T,L_{n})$ as before. Let $(Y_{i},\Xi_{i})_{i}$ be (a possible
uncountable)
 family of deformations and assume that $Y_{i}$ is a normed vector space and that $\Xi_{i}:M(X)\rightarrow Y_{i}$ is affine and
 continuous. Define the vector spaces $\times_{i}Y_{i}$ and
 $[\times_{i}Y_{i}]^{\infty}$ by $$\times_{i}Y_{i}=\{(y_{i})_{i}:y_{i}\in Y_{i}\forall
 i\},$$$$[\times_{i}Y_{i}]^{\infty}=\{(y_{i})_{i}\in\times_{i}Y_{i}:\sup_{i}||y_{i}||<\infty\},$$
 and equip $[\times_{i}Y_{i}]^{\infty}$ with the norm
 $||(y_{i})_{i}||=\sup\limits_{i}||y_{i}||.$ Assume $\sup\limits_{\mu\in
 M(X),i}||\Xi_{i}\mu||<\infty$ and the map
 $$M(X)\rightarrow[\times_{i}Y_{i}]^{\infty}:\mu\mapsto(\Xi_{i}\mu)_{i}$$
 is continuous, $\Xi=(\Xi_{i})_{i\in I}.$ Fix $S_{1},S_{2}\subset[\times_{i}Y_{i}]^{\infty}, \psi\in C(X,\mathbb{R})$
 \begin{enumerate}
     \item
     If $S_{1}=\emptyset$ and $S_{2}$ is closed and convex, then
\begin{equation*}
    P(\{x\in X:S_{1}\subset A((\Xi_{j}L_{n}x)_{j\in I})\subset S_{2}\},\psi)=\sup_{x\in S_{2}}\sup\limits_{\mu\in M(X,T)\atop(\Xi_{j}\mu)_{j\in I}=x}\left\{h(T,\mu)+\int\psi
    d\mu\right\}.
\end{equation*}
      \item
     If $S_{1}\neq\emptyset$ and $\overline{co}(S_{1})$ is contained in a connected component of $S_{2},$ then
\begin{equation*}
    P(\{x\in X:S_{1}\subset A((\Xi_{j}L_{n}x)_{j\in I})\subset S_{2}\},\psi)=\inf_{x\in S_{1}}\sup\limits_{\mu\in M(X,T)\atop(\Xi_{j}\mu)_{j\in I}=x}\left\{h(T,\mu)+\int\psi
    d\mu\right\}.
\end{equation*}
        \item
     If $S_{1}\neq\emptyset$ and $\overline{co}(S_{1})$ is not contained in a connected component of $S_{2},$ then
\begin{equation*}
    \{x\in X:S_{1}\subset A((\Xi_{j}L_{n}x)_{j\in I})\subset S_{2}\}=\emptyset.
\end{equation*}
 \end{enumerate}
\end{cro}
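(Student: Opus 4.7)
The plan is to recognize Corollary~\ref{cor5.3} as a direct specialization of Theorem~\ref{thm1.4} applied to the single product deformation $(Y,\Xi):=([\times_i Y_i]^\infty,\,(\Xi_i)_{i\in I})$. So first I would verify that this pair fits the framework of the paper: $Y$ must be a vector space equipped with a linearly compatible metric, and $\Xi:M(X)\to Y$ must be continuous and affine.

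The vector space structure on $[\times_i Y_i]^\infty$ is given by coordinatewise operations, and the sup-norm $\|(y_i)_i\|=\sup_i\|y_i\|$ is finite there by definition; as noted in Section~2, any norm induces a linearly compatible metric, so the first requirement is free. The map $\Xi:\mu\mapsto(\Xi_i\mu)_{i\in I}$ does take values in $[\times_i Y_i]^\infty$ by virtue of the hypothesis $\sup_{\mu,i}\|\Xi_i\mu\|<\infty$; its continuity is assumed outright; and its affineness is checked coordinatewise, since for $\mu,\nu\in M(X)$ and $t\in[0,1]$ each $\Xi_i$ satisfies $\Xi_i(t\mu+(1-t)\nu)=t\Xi_i\mu+(1-t)\Xi_i\nu$, and therefore $\Xi(t\mu+(1-t)\nu)=t\Xi\mu+(1-t)\Xi\nu$.

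Once the framework is in place, the three cases of Corollary~\ref{cor5.3} match the three cases of Theorem~\ref{thm1.4}: the set $\Delta(S_1,S_2)$ from that theorem, applied to our $\Xi$, is precisely $\{x\in X:S_1\subset A((\Xi_j L_n x)_{j\in I})\subset S_2\}$, and the quantity $\Lambda(x,\psi)$ unpacks, via $\Xi\mu=(\Xi_j\mu)_{j\in I}$, to exactly the inner supremum $\sup_{\mu\in M(X,T),\,(\Xi_j\mu)_{j\in I}=x}\{h(T,\mu)+\int\psi\,d\mu\}$ appearing in the conclusions. The hypothesis that $(X,T)$ satisfies the $g$-almost product and uniform separation properties is inherited from the standing assumption "$(X,T,L_n)$ as before" and is what licenses the use of Theorem~\ref{thm1.4}.

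There is essentially no obstacle beyond bookkeeping; all the dynamical content already sits inside Theorem~\ref{thm1.4}, and the only nontrivial point to confirm is the compatibility of the product construction with the hypotheses of the framework, which is exactly what the hypotheses of the corollary were chosen to ensure. I would remark that the parenthetical condition "$S_2$ closed and convex" in case~(1) is not actually used in the argument, since case~(1) of Theorem~\ref{thm1.4} has no such requirement.
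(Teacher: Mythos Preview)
Your proposal is correct and is precisely what the paper has in mind: the authors explicitly omit the proof of Corollary~\ref{cor5.3}, stating that it follows easily from the main theorems, and your reduction to Theorem~\ref{thm1.4} via the single product deformation $(Y,\Xi)=([\times_i Y_i]^\infty,(\Xi_i)_{i\in I})$ is exactly that derivation. Your observation that the ``closed and convex'' hypothesis in case~(1) is superfluous is also apt.
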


\noindent Next, we use dimension theory to discuss
$\Delta_{equ}(\cdot), \Delta_{sub}(\cdot)$ and so on.

 Let
$\psi:X\rightarrow\mathbb{R}$ be a strictly positive continuous
function. For each set $Z\subset X$ and each number
$t\in\mathbb{R},$ define
\begin{equation*}
N(Z,t,\psi,n,\epsilon):=\inf\limits_{\mathcal{C}\in\mathcal{G}_{n}(Z,\epsilon)}\left\{\sum\limits
_{B_{m}(x,\epsilon)\in\mathcal{C}}\exp(-t\sup\limits_{y\in
B_{m}(x,\epsilon)}S_{m}\psi(y))\right\},
\end{equation*}
where
$\mathcal{G}_{n}(Z,\epsilon)$ is the collection of all finite or
countable covers of $Z$ by sets of the form $B_{m}(x,\epsilon),$
with $m\geq n.$
$$N(Z,t,\psi,\epsilon)=\lim\limits_{N\rightarrow\infty}N(Z,t,\psi,n,\epsilon),$$
Set
$$BS(Z,\psi,\epsilon)=\inf\{t:N(Z,t,\psi,\epsilon)=0\}=\sup\{t:N(Z,t,\psi,\epsilon)=\infty\}.$$
Let
$BS(Z,\psi)=\lim\limits_{\epsilon\rightarrow0}BS(Z,\psi,\epsilon),$
and we call it the BS-dimension of $Z.$ This notation was introduced
by Barreira and Schmeling \cite{BarSch}.

By the definition of topological pressure and BS-dimension, we can
get that for any set $Z\subset X,$ the BS-dimension of $Z$ is a
unique root of Bowen's equation $P(Z,-s\psi)=0,$ i.e.
$s=BS(Z,\psi).$

The following corollaries are easy to obtain from  above theorems.

\begin{cro}\label{cor5.4}
$(X,T,\Xi,L_{n},Y)$ satisfies g-almost product property and the
uniform separation property and $\varphi\in C(X,\mathbb{R}^{+}).$ If
\begin{enumerate}
  \item $C\subset Y$
is not a compact and connected subset of $\Xi(M(X,T)),$ then

\begin{equation*}
\{x\in X:A(\Xi L_{n}x)=C \}=\emptyset,
\end{equation*}

  \item $C\subset Y$ is a compact and connected subset of $\Xi(M(X,T)),$ then

\begin{equation*}
BS(\Delta_{equ}(C),\varphi)=\inf\limits_{y\in C}\sup\limits_{\mu\in
M(X,T)\atop\Xi \mu=y}\left\{\frac{h(T,\mu)}{\int\varphi
d\mu}\right\}.
\end{equation*}
\end{enumerate}
\end{cro}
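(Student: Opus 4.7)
The plan is to deduce the corollary from Theorem \ref{thm1.1} by invoking the Bowen-equation characterization of BS-dimension recalled just before the statement: $s = BS(Z,\varphi)$ is the unique root of $P(Z,-s\varphi)=0.$ The hypothesis $\varphi\in C(X,\mathbb{R}^+)$ is essential here, since it makes $s\mapsto P(Z,-s\varphi)$ strictly decreasing. I set $0<m:=\min_X\varphi\leq c_\mu:=\int\varphi\,d\mu\leq M:=\max_X\varphi<\infty$ for every $\mu\in M(X,T),$ and write $G(y):=\sup_{\Xi\mu=y}h(T,\mu)/c_\mu$ together with $s^*:=\inf_{y\in C}G(y).$

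Part (1) is immediate from Theorem \ref{thm1.1}(1), which gives $\Delta_{equ}(C)=\emptyset$ regardless of the potential. For part (2), apply Theorem \ref{thm1.1}(2) with the continuous potential $-s\varphi$ to obtain
\begin{equation*}
P(\Delta_{equ}(C),-s\varphi)=\inf_{y\in C}\sup_{\mu\in M(X,T),\,\Xi\mu=y}\bigl\{h(T,\mu)-s\,c_\mu\bigr\}.
\end{equation*}
The corollary thus reduces to verifying the identity $P(\Delta_{equ}(C),-s^*\varphi)=0,$ since uniqueness of the Bowen root then forces $BS(\Delta_{equ}(C),\varphi)=s^*,$ which is exactly the claimed formula.

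The only technical step is this vanishing assertion, and it will be the main (if mild) obstacle: neither the suprema defining $G(y)$ nor the infimum defining $s^*$ need be attained, so the argument must proceed through approximating sequences. The elementary identity $h(T,\mu)-s^*c_\mu=c_\mu\bigl(h(T,\mu)/c_\mu-s^*\bigr)$ combined with $m\leq c_\mu\leq M$ gives two matching inequalities. For the lower bound, fix $y\in C;$ since $G(y)\geq s^*,$ for every $\varepsilon>0$ we can pick $\mu$ with $\Xi\mu=y$ and $h(T,\mu)/c_\mu>s^*-\varepsilon,$ whence $h(T,\mu)-s^*c_\mu>-\varepsilon M,$ so $\sup_{\Xi\mu=y}\{h(T,\mu)-s^*c_\mu\}\geq 0;$ taking the infimum over $y\in C$ yields $P(\Delta_{equ}(C),-s^*\varphi)\geq 0.$ For the upper bound, choose $y_n\in C$ with $G(y_n)\to s^*$ (possible by definition of infimum); for any $\mu$ with $\Xi\mu=y_n,$
\begin{equation*}
h(T,\mu)-s^*c_\mu\leq(G(y_n)-s^*)c_\mu\leq M(G(y_n)-s^*),
\end{equation*}
so the supremum over such $\mu$ is at most $M(G(y_n)-s^*)\to 0,$ forcing $P(\Delta_{equ}(C),-s^*\varphi)\leq 0.$ Combining the two inequalities closes the argument, and the uniform bounds $m,M$ afforded by the strict positivity of $\varphi$ are what smooth over the non-attainment of the extrema.
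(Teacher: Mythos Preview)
Your proof is correct and follows exactly the route the paper intends: the paper does not spell out a proof of Corollary~\ref{cor5.4}, merely remarking that it is ``easy to obtain from above theorems'' together with the Bowen-equation characterization $P(Z,-s\varphi)=0 \iff s=BS(Z,\varphi)$, and your argument is a careful execution of precisely this strategy via Theorem~\ref{thm1.1}. The only content beyond what the paper states explicitly is your verification that the root of $s\mapsto \inf_{y\in C}\sup_{\Xi\mu=y}\{h(T,\mu)-s\int\varphi\,d\mu\}$ is $\inf_{y\in C}\sup_{\Xi\mu=y}h(T,\mu)/\int\varphi\,d\mu$, which you handle cleanly using the uniform bounds $m\le c_\mu\le M$ and approximating sequences; this is the detail the paper suppresses.
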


\begin{cro}\label{cor5.5}
 $(X,T,\Xi,L_{n},Y)$ as before
and $\varphi\in C(X,\mathbb{R}^{+}).$ If
\begin{enumerate}
  \item  $C\subset
Y$ is not a subset of $\Xi(M(X,T)),$ then

\begin{equation*}
\{x\in X:C\subset A(\Xi L_{n}x)\}=\emptyset.
\end{equation*}

  \item   $C\subset Y$ is  a subset of $\Xi(M(X,T)),$ then

\begin{equation*}
BS(\Delta_{sub}(C),\varphi)=\inf\limits_{y\in C}\sup\limits_{\mu\in
M(X,T)\atop\Xi\mu=y}\left\{\frac{h(T,\mu)}{\int\varphi
d\mu}\right\}.
\end{equation*}
\end{enumerate}
\end{cro}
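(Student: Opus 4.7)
The plan is to reduce to Theorem \ref{thm1.2} via Bowen's equation for BS-dimension, as recalled just before the statement. Part (1) is immediate: the hypothesis $C \not\subset \Xi(M(X,T))$ is exactly the hypothesis of Theorem \ref{thm1.2}(1), which gives $\{x\in X : C\subset A(\Xi L_n x)\}=\Delta_{sub}(C)=\emptyset$, so the BS-dimension statement is vacuous.

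For Part (2), I would apply Theorem \ref{thm1.2}(2) to the continuous function $-s\varphi \in C(X,\mathbb{R})$ for each $s\in\mathbb{R}$. This yields
\[
F(s) \;:=\; P(\Delta_{sub}(C),\,-s\varphi) \;=\; \inf_{y\in C}\,\sup_{\mu\in M(X,T),\,\Xi\mu=y}\Bigl\{h(T,\mu)-s\int\varphi\,d\mu\Bigr\}.
\]
Bowen's equation characterises $BS(\Delta_{sub}(C),\varphi)$ as the unique root of $F(s)=0$, so the task reduces to identifying this root with
\[
\sigma \;:=\; \inf_{y\in C}\,\sup_{\mu\in M(X,T),\,\Xi\mu=y}\frac{h(T,\mu)}{\int\varphi\,d\mu}.
\]

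The main (and essentially only) obstacle is the commutation of $\inf_y\sup_\mu$ with the root-finding in $s$, and this is where strict positivity of $\varphi$ is crucial. Since $\varphi$ is continuous and positive on the compact space $X$, one has $c:=\min_X\varphi>0$, so $\int\varphi\,d\mu\geq c$ for every $\mu\in M(X)$. For fixed $y$, the function $G_y(s):=\sup_{\mu:\Xi\mu=y}\{h(T,\mu)-s\int\varphi\,d\mu\}$ is a supremum of affine functions in $s$ with slopes in $[-\|\varphi\|_\infty,-c]$, hence convex, strictly decreasing and Lipschitz; a short calculation then identifies its unique root as $s^*(y):=\sup_{\mu:\Xi\mu=y} h(T,\mu)/\int\varphi\,d\mu$. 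Taking $\inf_y$ preserves continuity and strict monotonicity of $F$: for $s<\sigma$ the uniform bound $G_y(s)\geq c(s^*(y)-s)\geq c(\sigma-s)>0$ yields $F(s)>0$, while for $s>\sigma$ any $y_0\in C$ with $s^*(y_0)<s$ gives $F(s)\leq G_{y_0}(s)<0$. Continuity then places the unique root of $F$ exactly at $\sigma$, which is the desired identification of $BS(\Delta_{sub}(C),\varphi)$.
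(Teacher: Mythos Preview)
Your proposal is correct and follows exactly the route the paper intends: reduce to Theorem~\ref{thm1.2} with potential $-s\varphi$ and invoke the characterisation of $BS(Z,\varphi)$ as the unique root of $s\mapsto P(Z,-s\varphi)$ stated just before the corollary. The paper itself gives no further details, declaring the corollaries ``easy to obtain from above theorems''; your root-identification argument (using $\int\varphi\,d\mu\ge c>0$ to get the uniform lower bound $G_y(s)\ge c(s^*(y)-s)$ and thereby commute $\inf_y$ with the root) is the natural way to make that step rigorous and is sound as written.
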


\begin{cro}\label{cor5.6}
$(X,T,\Xi,L_{n},Y)$ as before and $\varphi\in C(X,\mathbb{R}^{+}),$
fix $S_{1}\subset\Xi(M(X,T)),S_{2}\subset Y,$ if
\begin{enumerate}
  \item $S_{1}=\emptyset,$ then

\begin{equation*}
BS(\Delta(S_{1},S_{2}),\varphi)=\sup\limits_{x\in
S_{2}}\sup\limits_{\Xi\mu=x\atop\mu\in
M(X,T)}\left\{\frac{h(T,\mu)}{\int\varphi d\mu}\right\}.
\end{equation*}

  \item   $S_{1}\neq\emptyset$ and $S_{1}$ is contained in a connected
component of $S_{2},$ then
\begin{eqnarray*}\begin{split}
\sup\limits_{S_{1}\subseteq Q\subseteq
S_{2}\atop~Q\subseteq\Xi(M(X,T))~is~ compact~and~
connected}\inf\limits_{x\in Q}\sup\limits_{\Xi\mu=x\atop\mu\in
M(X,T)}\left\{\frac{h(T,\mu)}{\int\varphi d\mu}\right\}\\ \leq
BS(\Delta(S_{1},S_{2}),\varphi) \leq\inf\limits_{x\in
S_{1}}\sup\limits_{\Xi\mu=x\atop\mu\in
M(X,T)}\left\{\frac{h(T,\mu)}{\int\varphi d\mu}\right\}.
\end{split}
\end{eqnarray*}

  \item  $S_{1}\neq\emptyset$ and $S_{1}$ is not contained in a
connected component of $S_{2},$ then $$\{x\in X:S_{1}\subset A(\Xi
L_{n}x)\subset S_{2}\}=\emptyset.$$
\end{enumerate}
\end{cro}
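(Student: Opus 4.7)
The proof will reduce Corollary \ref{cor5.6} to Theorems \ref{thm1.3} and \ref{thm1.4} via Bowen's equation. The key tool, explicitly recorded in the paragraph preceding the statement, is that for any $Z\subset X$ and any strictly positive $\psi\in C(X,\mathbb{R}^+)$ the BS-dimension $BS(Z,\psi)$ is the unique root of the equation $P(Z,-s\psi)=0$. Since $\psi>0$ makes $s\mapsto P(Z,-s\psi)$ continuous and strictly decreasing, the BS-dimension is a monotone functional of the pressure, so any sandwich inequality for pressures at $\varphi=-s\psi$ translates directly into the analogous sandwich for BS-dimensions. The plan is therefore to apply the relevant pressure theorem with $\varphi:=-s\psi$, unpack $\Lambda(x,-s\psi)=\sup_{\Xi\mu=x}\{h(T,\mu)-s\int\psi\,d\mu\}$, and solve the resulting equation for $s$.

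For part (1), Theorem \ref{thm1.4}(1) gives
\begin{equation*}
P(\Delta(\emptyset,S_{2}),-s\psi)=\sup_{x\in S_{2}}\sup_{\Xi\mu=x,\,\mu\in M(X,T)}\bigl\{h(T,\mu)-s{\textstyle\int}\psi\,d\mu\bigr\}.
\end{equation*}
Since $\int\psi\,d\mu>0$ for every $\mu\in M(X,T)$, each inner expression has a unique zero at $s=h(T,\mu)/\int\psi\,d\mu$, and the outer double-sup is zero precisely when $s=\sup_{x\in S_{2}}\sup_{\Xi\mu=x}h(T,\mu)/\int\psi\,d\mu$, giving the stated formula.

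For part (2), applying Theorem \ref{thm1.3}(2) with $\varphi=-s\psi$ yields, for every $s$, a two-sided inequality whose outer members are $\sup_{Q}\inf_{x\in Q}\Lambda(x,-s\psi)$ and $\inf_{x\in S_{1}}\Lambda(x,-s\psi)$. Monotonicity of pressure in $s$ transfers this sandwich to BS-dimensions. It remains to identify the unique zeros of the two bounding functions. For the upper bound, I would argue as in part (1): each map $s\mapsto\Lambda(x,-s\psi)$ is continuous and strictly decreasing, vanishing at $s(x):=\sup_{\Xi\mu=x}h(T,\mu)/\int\psi\,d\mu$; hence $\inf_{x\in S_{1}}\Lambda(x,-s\psi)\geq0$ iff $s\leq s(x)$ for every $x\in S_{1}$, so the root occurs at $\inf_{x\in S_{1}}s(x)$. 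The lower bound is handled by the same monotonicity argument applied first inside each fixed compact connected $Q$ and then taking the supremum over admissible $Q$, yielding the claimed expression.

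Part (3) is immediate from Theorem \ref{thm1.3}(3) (or \ref{thm1.4}(3)), which already asserts $\Delta(S_{1},S_{2})=\emptyset$ under the stated hypothesis; no dimension calculation is needed. The only real obstacle is the bookkeeping in part (2): commuting the root-finding operation $P(\cdot,-s\psi)=0$ with the $\inf_{x}$ (and with $\sup_{Q}\inf_{x}$) uses only continuity and strict monotonicity in $s$, but must be written out carefully so that a compact connected $Q\subseteq\Xi(M(X,T))$ in the pressure inequality is matched with the same $Q$ in the dimension inequality. Once this verification is made explicit, the three parts follow mechanically from the pressure theorems already proved.
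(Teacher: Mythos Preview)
Your proposal is correct and follows exactly the route the paper intends: the paper states only that ``the following corollaries are easy to obtain from above theorems'' after recording that $BS(Z,\psi)$ is the unique root of $P(Z,-s\psi)=0$, and your argument is precisely the natural unpacking of that sentence---apply Theorem~\ref{thm1.3} (or \ref{thm1.4}) with $\varphi=-s\psi$ and pass to roots via the strict monotonicity of $s\mapsto P(Z,-s\psi)$. The only point worth tightening is the lower bound in part~(2): rather than trying to commute root-finding with $\sup_Q\inf_x$ directly, it is cleaner to fix $Q$, use the inclusion $\Delta_{equ}(Q)\subset\Delta(S_1,S_2)$ together with monotonicity of BS-dimension and Corollary~\ref{cor5.4}(2) to get $\inf_{x\in Q}s(x)\le BS(\Delta(S_1,S_2),\varphi)$, and then take the supremum over $Q$---this avoids any continuity issues with the outer $\sup_Q$.
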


\begin{cro}\label{cor5.7}
$(X,T,\Xi,L_{n},Y)$ as before and $\varphi\in C(X,\mathbb{R}^{+}),$
fix $S_{1}\subset\Xi(M(X,T)),S_{2}\subseteq Y.$
\begin{enumerate}
  \item  If $S_{1}=\emptyset,$
then
\begin{equation*}
BS(\Delta(S_{1},S_{2}),\varphi)=\sup\limits_{x\in
S_{2}}\sup\limits_{\Xi\mu=x\atop\mu\in
M(X,T)}\left\{\frac{h(T,\mu)}{\int\varphi d\mu}\right\}.
\end{equation*}

  \item If $S_{1}\neq\emptyset$ and $\overline{co}(S_{1})$ the closed convex hull
of $S_{1}$ is contained in a connected component of $S_{2},$ then

\begin{equation*}
BS(\Delta(S_{1},S_{2}),\varphi)=\inf\limits_{x\in
S_{1}}\sup\limits_{\Xi\mu=x\atop\mu\in
M(X,T)}\left\{\frac{h(T,\mu)}{\int\varphi d\mu}\right\}.
\end{equation*}

  \item If $S_{1}\neq\emptyset$ and $S_{1}$ is not contained in a connected
component of $S_{2}$, then $$\{x\in X:S_{1}\subset A(\Xi
L_{n}x)\subset S_{2}\}=\emptyset.$$
\end{enumerate}
\end{cro}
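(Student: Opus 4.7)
The strategy is to reduce Corollary 5.7 to Theorem 1.4 via Bowen's equation: for any subset $Z\subset X$, the BS-dimension $BS(Z,\varphi)$ is the unique real number $s$ satisfying $P(Z,-s\varphi)=0$. Parts (1) and (3) follow by inserting the potential $-s\varphi$ into Theorem 1.4(1) and Theorem 1.4(3) respectively; part (3) is actually immediate because $\Delta(S_1,S_2)=\emptyset$ forces the BS-dimension to be trivial, and part (1) proceeds by applying Proposition 4.1 (reformulated via $BS$) or Theorem 1.4(1) to $-s\varphi$ and then solving the resulting Bowen-type equation as in part (2). So the entire substance is in part (2), which I sketch below.

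For part (2), apply Theorem 1.4(2) with the continuous potential $-s\varphi$ in place of $\varphi$. This yields
\begin{equation*}
P(\Delta(S_{1},S_{2}),-s\varphi)
=\inf_{x\in S_{1}}\sup_{\mu\in M(X,T),\,\Xi\mu=x}\Bigl\{h(T,\mu)-s\!\int\!\varphi\,d\mu\Bigr\}=:F(s).
\end{equation*}
By Bowen's equation, the quantity we want is the unique $s_0$ with $F(s_0)=0$. Define
\begin{equation*}
s^{*}:=\inf_{x\in S_{1}}\sup_{\mu\in M(X,T),\,\Xi\mu=x}\frac{h(T,\mu)}{\int\varphi\,d\mu}.
\end{equation*}
The plan is to show $F(s^{*})=0$, which identifies $s_{0}=s^{*}$ and proves (2).

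To verify $F(s^{*})=0$, I would exploit that $\varphi>0$ on the compact space $X$, so there exist constants $0<c\le C<\infty$ with $c\le\int\varphi\,d\mu\le C$ for every $\mu\in M(X,T)$. For $s<s^{*}$: for every $x\in S_{1}$, the supremum $\Lambda^{*}(x):=\sup_{\Xi\mu=x}h(T,\mu)/\!\int\!\varphi\,d\mu$ exceeds $s$, so there is $\mu$ with $h(T,\mu)-s\int\varphi\,d\mu\ge (s^{*}-s-\epsilon)\cdot c$; taking $\epsilon\to 0$ yields $F(s)\ge c(s^{*}-s)>0$. For $s>s^{*}$: there exists $x\in S_{1}$ with $\Lambda^{*}(x)<s$, and then $h(T,\mu)-s\int\varphi\,d\mu\le(\Lambda^{*}(x)-s)c<0$ uniformly in $\mu$ with $\Xi\mu=x$, so $F(s)<0$. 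Since $F$ is non-increasing in $s$ (the integrand $-s\!\int\!\varphi\,d\mu$ is strictly decreasing in $s$ because $\int\varphi\,d\mu\ge c>0$), the sign change forces $F(s^{*})=0$, as desired.

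The main obstacle, such as it is, is precisely this sign analysis: one must carefully use the two-sided bounds $c\le\int\varphi\,d\mu\le C$ to convert a statement about ratios $h/\!\int\varphi\,d\mu$ (which defines $s^{*}$) into a statement about linear combinations $h-s\int\varphi\,d\mu$ (which is what Theorem 1.4 produces), and to ensure that the infimum over $x\in S_{1}$ does not destroy the strict inequalities. Once this is done, parts (1) and (3) follow by the same template applied to Theorem 1.4(1) and Theorem 1.4(3), with part (3) being immediate from emptiness, finishing the corollary.
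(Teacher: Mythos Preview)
Your proposal is correct and follows exactly the paper's intended route: the paper states that $BS(Z,\varphi)$ is the unique root of Bowen's equation $P(Z,-s\varphi)=0$ and then declares Corollaries~5.4--5.7 ``easy to obtain from above theorems,'' meaning precisely the reduction of Corollary~5.7 to Theorem~1.4 via this equation. Your sign analysis for the inf--sup in part~(2) is more detailed than anything the paper writes out, and it is correct (the uniform bounds $0<c\le\int\varphi\,d\mu\le C$ make $F$ Lipschitz in $s$, so the sign change at $s^{*}$ identifies the unique root).
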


\begin{cro}\label{cor5.8}
 $(X,T,L_{n})$ as before. Let
$Y=\mathbb{R}$ and $\phi_{j}:X\rightarrow \mathbb{R}$
 be a family of continuous functions. Assume the family of maps $(\Xi_{j}:M(X)\rightarrow\mathbb{R}:\mu\mapsto\int \phi_{j}d\mu)_{j\in
 I}$ is totally bounded, $\Xi=(\Xi_{i})_{i\in I}.$ Fix $S_{1},S_{2}\subset l^{\infty}(I),\varphi\in C(X,\mathbb{R}^{+}).$
 \begin{enumerate}
     \item
     If $S_{1}=\emptyset$ and $S_{2}$ is closed and convex, then
\begin{equation*}
BS\left(\left\{x\in
X:A\left(\left(\frac{1}{n}\sum_{k=0}^{n-1}\phi_{j}(T^{k}x)\right)_{j\in
I}\right)\right\}\subset S_{2} ,\varphi\right)
     =\sup_{x\in S_{2}}\sup_{\Xi\mu=x\atop\mu\in M(X,T)}\left\{\frac{h(T,\mu)}{\int\varphi d
     \mu}\right\}.
\end{equation*}

      \item
     If $S_{1}\neq\emptyset$ and $\overline{co}(S_{1})$ is contained in a connected component of $S_{2},$ then
\begin{equation*}
     BS\left(\left\{x\in X:S_{1}\subset A\left(\left(\frac{1}{n}\sum_{k=0}^{n-1}\phi_{j}(T^{k}x)\right)_{j\in I}\right)\right\},\varphi\right)
     =\inf_{x\in S_{1}}\sup_{\Xi \mu=x\atop\mu\in M(X,T)}\left\{\frac{h(T,\mu)}{\int\varphi d\mu}\right\}.
\end{equation*}

 \item
     If $S_{1}\neq\emptyset$ and $\overline{co}(S_{1})$ is not contained in a connected component of $S_{2},$ then
 \begin{equation*}
    \left\{x\in X:S_{1}\subset A\left(\left(\frac{1}{n}\sum_{k=0}^{n-1}\phi_{j}(T^{k}x)\right)_{j\in I}\right)\right\}=\emptyset.
 \end{equation*}
  \end{enumerate}
\end{cro}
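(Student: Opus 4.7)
The plan is to reduce Corollary~5.8 to Corollary~5.3 by applying Bowen's equation. Since $\varphi$ is strictly positive on the compact space $X$, for every $\mu\in M(X,T)$ one has $\int\varphi\,d\mu>0$, and consequently the function $s\mapsto P(Z,-s\varphi)$ is strictly decreasing for any $Z\subset X$; hence $BS(Z,\varphi)$ is the unique real root of the Bowen equation $P(Z,-s\varphi)=0$, as remarked just after the definition of BS-dimension.

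Part (3) requires no pressure computation: its hypothesis is identical to that of Corollary~5.3(3), which already asserts emptiness of the set $\{x:S_1\subset A((\Xi_j L_n x)_{j\in I})\subset S_2\}$.

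For parts (1) and (2), I would substitute $\psi:=-s\varphi$ in Corollary~5.3 and obtain, for $\Delta:=\Delta(S_1,S_2)$,
\[
P(\Delta,-s\varphi)=\underset{x\in S_\ast}{\text{ext}}\,\sup_{\substack{\mu\in M(X,T)\\ \Xi\mu=x}}\Bigl\{h(T,\mu)-s\textstyle\int\varphi\,d\mu\Bigr\},
\]
where $\text{ext}=\sup$ and $S_\ast=S_2$ in case (1), and $\text{ext}=\inf$ and $S_\ast=S_1$ in case (2). The remaining task is to set this expression equal to zero and solve for $s$. Write $s(x):=\sup_{\mu:\Xi\mu=x}h(T,\mu)/\int\varphi\,d\mu$ and $g_x(s):=\sup_{\mu:\Xi\mu=x}\{h(T,\mu)-s\int\varphi\,d\mu\}$. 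Since $g_x$ is a supremum of affine functions of $s$ bounded above by $h_{\mathrm{top}}(T)<\infty$, it is convex, continuous and strictly decreasing, with $g_x(s)>0$ iff $s<s(x)$; hence $g_x(s(x))=0$.

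The step that must be handled carefully — and essentially the only substantive point, since the rest is a direct transcription of Corollary~5.3 through Bowen's equation — is the exchange between root-finding in $s$ and the outer extremization over $x$. In case (1), $\sup_{x\in S_2}g_x(s)=0$ iff $s=\sup_{x\in S_2}s(x)$, yielding the first formula; in case (2), $\inf_{x\in S_1}g_x(s)\ge 0$ iff $s\le\inf_{x\in S_1}s(x)$, and the infimum vanishes precisely at $s=\inf_{x\in S_1}s(x)$ by continuity and monotonicity of each $g_x$. Combining these observations with the pressure identities above gives the two claimed formulas, completing the proof.
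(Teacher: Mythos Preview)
Your proposal is correct and follows exactly the route the paper intends: the paper states that Corollary~5.8 (along with Corollaries~5.4--5.7 and 5.10) is ``easy to obtain from above theorems'' by combining the pressure results (here Corollary~5.1, equivalently your Corollary~5.3 specialized to $\Xi_j\mu=\int\phi_j\,d\mu$) with the Bowen-equation characterization of the BS-dimension. Your only addition is making explicit the monotonicity/continuity argument needed to commute the root-finding in $s$ with the outer $\sup$/$\inf$ over $x$, which the paper leaves to the reader.
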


\begin{cro}\label{cor5.10}
$(X,T,L_{n})$ as before. Let $(Y_{i},\Xi_{i})_{i}$ be (a possible
uncountable)
 family of deformations and assume that $Y_{i}$ is a normed vector space and that $\Xi_{i}:M(X)\rightarrow Y_{i}$ is affine and
 continuous. Define the vector spaces $\times_{i}Y_{i}$ and
 $[\times_{i}Y_{i}]^{\infty}$ by $$\times_{i}Y_{i}=\{(y_{i})_{i}|y_{i}\in Y_{i}\forall
 i\},$$$$[\times_{i}Y_{i}]^{\infty}=\{(y_{i})_{i}\in\times_{i}Y_{i}|\sup_{i}||y_{i}||<\infty\},$$
 and equip $[\times_{i}Y_{i}]^{\infty}$ with the norm
 $||(y_{i})_{i}||=\sup\limits_{i}||y_{i}||.$ Assume $\sup\limits_{\mu\in
 M(X),i}||\Xi_{i}\mu||<\infty$ and the map
 $$M(X)\rightarrow[\times_{i}Y_{i}]^{\infty}:\mu\mapsto(\Xi_{i}\mu)_{i}$$
 is continuous. Fix $S_{1},S_{2}\subset[\times_{i}Y_{i}]^{\infty}, \varphi\in C(X,\mathbb{R}^{+})$
 \begin{enumerate}
     \item
     If $S_{1}=\emptyset$ and $S_{2}$ is closed and convex, then
 \begin{equation*}
    BS(\{x\in X:S_{1}\subset A((\Xi_{j}L_{n}x)_{j\in I})\subset S_{2}\},\varphi)=
    \sup_{x\in S_{2}}\sup\limits_{\mu\in M(X,T)\atop(\Xi_{j}\mu)_{j\in I}=x}\left\{\frac{h(T,\mu)}{\int\varphi
    d\mu}\right\}.
\end{equation*}
      \item
     If $S_{1}\neq\emptyset$ and $\overline{co}(S_{1})$ is contained in a connected component of $S_{2},$ then
 \begin{equation*}
    BS(\{x\in X:S_{1}\subset A((\Xi_{j}L_{n}x)_{j\in I})\subset S_{2}\},\varphi)=\inf_{x\in S_{1}}\sup\limits_{\mu\in M(X,T)\atop(\Xi_{j}\mu)_{j\in I}=x}\left\{\frac{h(T,\mu)}{\int\varphi
    d\mu}\right\}.
\end{equation*}
        \item
     If $S_{1}\neq\emptyset$ and $\overline{co}(S_{1})$ is not contained in a connected component of $S_{2},$ then
 \begin{equation*}
   \{x\in X:S_{1}\subset A((\Xi_{j}L_{n}x)_{j\in I})\subset
   S_{2}\}=\emptyset.
\end{equation*}
 \end{enumerate}
\end{cro}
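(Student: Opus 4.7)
The plan is to reduce Corollary~\ref{cor5.10} to the single-deformation BS-dimension statement Corollary~\ref{cor5.7} by bundling the whole family $(\Xi_i)_{i\in I}$ into one deformation. Concretely, I take $Y:=[\times_i Y_i]^\infty$ equipped with the sup-norm $\|(y_i)_i\|_\infty:=\sup_i\|y_i\|$, and I define $\Xi:M(X)\to Y$ by $\Xi\mu:=(\Xi_i\mu)_{i\in I}$. Under the standing hypotheses of the corollary, the uniform bound $\sup_{\mu\in M(X),\,i}\|\Xi_i\mu\|<\infty$ guarantees that $\Xi$ lands in $Y$; $\Xi$ is continuous into $(Y,\|\cdot\|_\infty)$ by assumption, affine because each $\Xi_i$ is affine, and the sup-norm induces a linearly compatible metric on $Y$. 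Hence $(X,T,\Xi,L_n,Y)$ is a bona fide quintuple of the kind treated in Sections~3--4.

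With this bundling in place, the set $\{x\in X:S_1\subset A((\Xi_j L_n x)_{j\in I})\subset S_2\}$ is literally $\Delta(S_1,S_2)$ for the single deformation $\Xi$, and the constraint $(\Xi_j\mu)_{j\in I}=x$ in the variational expression coincides with $\Xi\mu=x$. Consequently I simply invoke Corollary~\ref{cor5.7} case by case: (1) when $S_1=\emptyset$ and $S_2$ is closed and convex, Corollary~\ref{cor5.7}(1) gives the supremum formula; (2) when $\overline{co}(S_1)$ is contained in a connected component of $S_2$, Corollary~\ref{cor5.7}(2) gives the infimum formula; (3) when $\overline{co}(S_1)$ is not contained in a connected component of $S_2$, Corollary~\ref{cor5.7}(3) yields emptiness of the set.

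The only step that is more than bookkeeping is implicit: Corollary~\ref{cor5.7} itself is obtained by applying Bowen's equation $s=BS(Z,\varphi)\iff P(Z,-s\varphi)=0$ to Theorem~\ref{thm1.4}, and this Bowen step requires $\varphi>0$ on the compact set $X$ so that $s\mapsto h(T,\mu)-s\int\varphi\,d\mu$ is strictly decreasing with a unique zero at $h(T,\mu)/\int\varphi\,d\mu$; this is precisely the sign hypothesis $\varphi\in C(X,\mathbb{R}^+)$ in Corollary~\ref{cor5.10}, so the translation of each pressure formula into a BS-dimension formula goes through unchanged under the monotone operations $\inf_{x\in S_1}$ and $\sup_{x\in S_2}$.

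The expected main obstacle is therefore not analytic but structural: one must check that packaging $(\Xi_i)_{i\in I}$ into the single map $\Xi$ actually preserves every hypothesis required by the preceding theory (continuous affine deformation of $M(X)$ into a space with a linearly compatible metric). All of these are either built into the statement of Corollary~\ref{cor5.10} or follow immediately from the definition of the sup-norm on $[\times_i Y_i]^\infty$, so once the quintuple $(X,T,\Xi,L_n,Y)$ is seen to be admissible, Corollary~\ref{cor5.10} follows directly from Corollary~\ref{cor5.7} with no further work.
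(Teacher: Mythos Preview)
Your proposal is correct and matches the paper's approach: the paper states that Corollaries~\ref{cor5.4}--\ref{cor5.10} ``are easy to obtain from above theorems'' and gives no separate proof, the intended argument being precisely to bundle $(\Xi_i)_i$ into a single admissible deformation $\Xi:M(X)\to[\times_i Y_i]^\infty$ (as in Corollary~\ref{cor5.3}) and then apply Corollary~\ref{cor5.7}. One small caveat: in part~(3) Corollary~\ref{cor5.7} is phrased for $S_1$ rather than $\overline{co}(S_1)$, so strictly speaking you are using Proposition~\ref{prop2.5} directly (connectedness of $A(\Xi L_n x)$) rather than Corollary~\ref{cor5.7}(3) verbatim, but this is a cosmetic mismatch in the paper's own statements and does not affect your argument.
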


\subsection{The relative multifractal spectrum of ergodic averages }

 The relative multifractal spectrum of ergodic averages. Let $f,g\in
C(X,\mathbb{R})$ with $g(x)\neq0$ for all $x\in X$ and
$C\subseteq\mathbb{R}.$ Define $\Xi:M(X)\to \mathbb{R}$ by
$\Xi:\mu\mapsto\frac{\int f d\mu}{\int gd\mu}.$ Here remark that
$\Xi$ is continuous but not affine.

\begin{cro}\label{cor5.2}
 $(X,T,L_{n})$ as before. Let $f_1, g_1, \cdots, f_m, g_m$ be continuous functions $f_i, g_i:
X\to \mathbb{R}$ with $g_i(x)\neq0$ for all $x\in X, i=1,\cdots,m$
and $\int g_i d\mu\neq0,$ for all $\mu\in M(X,T),i=1,\cdots,m .$ If
$C\subseteq \mathbb{R}^m $ is closed and convex, $\psi\in
C(X,\mathbb{R}),$ then

\begin{equation*}\begin{split}
     &P\left(\left\{x\in X: A\left(\left(\frac{\sum_{k=0}^{n-1}f_{j}(T^{k}x)}{\sum_{k=0}^{n-1}g_{j}(T^{k}x)}\right)_{j\in \{1,2,\cdots,m\}}\right)\subseteq C\right\},\psi\right)
\\&=\sup\left\{h(T,\mu)+\int\psi d\mu:\mu\in M(X,T),\left(\frac{\int
f_i d\mu}{\int g_id\mu}\right)_{i\in\{1,\cdots,m\}}\in C\right\}.
\end{split}\end{equation*}
\end{cro}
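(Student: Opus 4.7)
The plan is to linearize the nonlinear ratio map $\mu\mapsto(\int f_i\,d\mu/\int g_i\,d\mu)_i$ by lifting it to an affine deformation. Set $Y'=\mathbb{R}^{2m}$ and define
$$\Xi':M(X)\to Y',\qquad \Xi'\mu=\Bigl(\int f_1\,d\mu,\int g_1\,d\mu,\ldots,\int f_m\,d\mu,\int g_m\,d\mu\Bigr),$$
which is continuous and affine, so $(X,T,\Xi',L_n,Y')$ fits the framework of the paper. Let $D=\{(a_1,b_1,\ldots,a_m,b_m)\in Y':b_i\neq 0\text{ for each }i\}$ and define the continuous map $\Phi:D\to\mathbb{R}^m$ by $\Phi(a_1,b_1,\ldots,a_m,b_m)=(a_1/b_1,\ldots,a_m/b_m)$. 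The hypothesis $\int g_i\,d\mu\neq 0$ for every $\mu\in M(X,T)$ says exactly that $\Xi'(M(X,T))\subseteq D$.

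The key computation is $\Phi(\Xi' L_n x)=\bigl(\sum_{k=0}^{n-1}f_j(T^k x)\big/\sum_{k=0}^{n-1}g_j(T^k x)\bigr)_{j=1}^m$, which is well defined for all sufficiently large $n$ (the denominators are bounded away from $0$ because every weak-$*$ accumulation point of $\{L_n x\}$ lies in $M(X,T)$, where $\int g_i$ is nonzero). I would next verify the accumulation-point identity
$$A\bigl(\Phi\circ\Xi' L_n x\bigr)=\Phi\bigl(A(\Xi' L_n x)\bigr),$$
which allows one to rewrite the target set as
$$\{x\in X:A(\Phi\circ\Xi' L_n x)\subseteq C\}=\{x\in X:A(\Xi' L_n x)\subseteq\Phi^{-1}(C)\}=\Delta_{sup}(\Phi^{-1}(C)),$$
the latter taken with respect to the affine deformation $\Xi'$. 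Proposition \ref{prop4.1} applied to this set yields
$$P(\Delta_{sup}(\Phi^{-1}(C)),\psi)=\sup_{y\in\Phi^{-1}(C)}\Lambda_{\Xi'}(y,\psi),$$
and unpacking $\Lambda_{\Xi'}$ via the substitution $y=\Xi'\mu$ produces precisely the claimed supremum over $\{\mu\in M(X,T):(\int f_i\,d\mu/\int g_i\,d\mu)_i\in C\}$.

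The main obstacle is the accumulation-point identity. The inclusion $\Phi(A(\Xi' L_n x))\subseteq A(\Phi\circ\Xi' L_n x)$ is immediate from continuity of $\Phi$ on $D$. For the reverse, given $z\in A(\Phi\circ\Xi' L_n x)$, one extracts indices $n_k$ with $\Phi(\Xi' L_{n_k}x)\to z$, and then by weak-$*$ compactness a sub-subsequence with $L_{n_{k_\ell}}x\to\nu\in M(X,T)$. Hence $\Xi' L_{n_{k_\ell}}x\to\Xi'\nu\in\Xi'(M(X,T))\subseteq D$, so continuity of $\Phi$ at $\Xi'\nu$ gives $z=\Phi(\Xi'\nu)\in\Phi(A(\Xi' L_n x))$. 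The delicate point is keeping the subsequential limits inside the domain $D$ where $\Phi$ is continuous; this is exactly what the hypothesis $\int g_i\,d\mu\neq 0$ on invariant measures ensures. I note that the closedness and convexity of $C$ are not actually used in this route through Proposition \ref{prop4.1}; they are presumably imposed only to keep the statement tidy.
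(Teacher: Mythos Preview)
Your argument is correct and takes a genuinely different route from the paper's. Both proofs obtain the upper bound in essentially the same way, by reducing to Proposition~\ref{prop3.1}(i) on the closed set $\{\mu\in M(X,T):(\int f_i\,d\mu/\int g_i\,d\mu)_i\in C\}$. The difference lies in the lower bound. You lift to a single affine deformation $\Xi':M(X)\to\mathbb{R}^{2m}$, identify the target set with $\Delta_{sup}(\Phi^{-1}(C))$ for that deformation, and invoke Proposition~\ref{prop4.1} once to get both inequalities. The paper instead fixes an arbitrary $\alpha=(\alpha_1,\ldots,\alpha_m)\in C$ and uses the affine deformation $\mu\mapsto\bigl(\int(f_i-\alpha_i g_i)\,d\mu\bigr)_i$ at the single level $\{0\}$: since $A\bigl(\tfrac{1}{n}\sum_k(f_i-\alpha_i g_i)(T^kx)\bigr)=\{0\}$ forces every accumulation point of the ratio vector to equal $\alpha\in C$, Theorem~\ref{thm1.1} yields $\sup\{h(T,\mu)+\int\psi\,d\mu:\int f_i\,d\mu=\alpha_i\int g_i\,d\mu\ \forall i\}$ as a lower bound for the pressure, and taking the supremum over $\alpha\in C$ finishes. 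Your route is cleaner and handles general $m$ uniformly (the paper only writes out $m=1$); the paper's route stays within the single-level-set machinery and avoids the auxiliary accumulation-point identity.

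One small correction to your closing remark: closedness of $C$ \emph{is} used in your route as well. Proposition~\ref{prop4.1} relies on Theorem~\ref{thm3.1}(1), whose proof in turn needs the preimage $(\Xi')^{-1}(\Phi^{-1}(C))\cap M(X,T)=\{\mu\in M(X,T):\Phi(\Xi'\mu)\in C\}$ to be closed in $M(X,T)$; this follows from closedness of $C$ together with continuity of $\Phi\circ\Xi'$ on $M(X,T)$ (which is exactly what the hypothesis $\int g_i\,d\mu\neq0$ guarantees). You are right that convexity of $C$ is not used in either argument.
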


\begin{proof}
Since the map $\Xi:\mu\mapsto \left(\frac{\int f_id\mu}{\int
g_id\mu}\right)_{i=1,\cdots,m}$ is continuous, we have

\begin{equation*}\begin{split}
&\left\{x\in X: A\left(\Xi L_n(x)\right)\subseteq C\right\}\\&
=\left\{x\in X: \Xi A\left(L_n(x)\right)\subseteq C\right\}\\&
=\left\{x\in X: A\left(L_n(x)\right)\subseteq \Xi^{-1}C\right\}\\&
\subseteq\left\{x\in X: A\left(L_n(x)\right)\cap
\Xi^{-1}C\neq\emptyset\right\}\\& =\left\{x\in X:
A\left(L_n(x)\right)\cap (\Xi^{-1}C\cap
M(X,T))\neq\emptyset\right\}.
\end{split}\end{equation*}
It follows from Proposition \ref{prop3.1} (i) that

\begin{equation*}\begin{split}
     &P\left(\left\{x\in X: A\left(\left(\frac{\sum_{k=0}^{n-1}f_{j}(T^{k}x)}{\sum_{k=0}^{n-1}g_{j}(T^{k}x)}\right)_{j\in \{1,2,\cdots,m\}}\right)\subseteq C\right\},\psi\right)
\\&\leq\sup\left\{h(T,\mu)+\int\psi d\mu:\mu\in M(X,T),\left(\frac{\int
f_i d\mu}{\int g_id\mu}\right)_{i\in\{1,\cdots,m\}}\in C\right\}.
\end{split}\end{equation*}
To the opposite inequality,  we prove the case $m=1$ as example. For
any $\alpha\in C,$

\begin{equation*}\begin{split}
&\left\{x\in X:A\left(\frac{1}{n}\sum\limits_{k=0}^{n-1}(f_1(T^kx)
-\alpha g_1(T^kx))\right)=0\right\}
\\&\subseteq\left\{x\in
X:A\left(\frac{\sum\limits_{k=0}^{n-1}f_1(T^kx)}{\sum\limits_{k=0}^{n-1}g_1(T^kx)}\right)\subseteq
C\right\}.
\end{split}\end{equation*}
Hence,

\begin{equation*}\begin{split}
&\sup\left\{h(T,\mu)+\int \psi d\mu:\frac{\int f_1 d\mu}{\int
g_1d\mu}=\alpha\in C,\mu\in
M(X,T)\right\}\\=&\sup\left\{h(T,\mu)+\int \psi d\mu:\int f_1-\alpha
g_1d\mu=0,\alpha\in C,\mu\in
M(X,T)\right\}\\=&\sup\limits_{\alpha\in C}P\left(\left\{x\in
X:A\left(\frac{1}{n}\sum\limits_{k=0}^{n-1}(f_1(T^kx) -\alpha
g_1(T^kx))\right)=0\right\},\psi\right)\\\leq&P\left(\left\{x\in
X:A\left(\frac{\sum\limits_{k=0}^{n-1}f_1(T^kx)}{\sum\limits_{k=0}^{n-1}g_1(T^kx)}\right)\subseteq
C\right\},\psi\right).
\end{split}\end{equation*}
Since the case $m>1$ is similar to $m=1,$ the proof is omitted.
\end{proof}

\begin{cro}\label{cor5.9}
 $(X,T,L_{n})$ as before. Let $f_1, g_1, \cdots, f_m, g_m$ be continuous functions $f_i, g_i:
X\to \mathbb{R}$ with $g_i(x)\neq0$ for all $x\in X, i=1,\cdots,m$
and $\int g_i d\mu\neq0,$ for all $\mu\in M(X,T),i=1,\cdots,m .$ If
$C\subseteq \mathbb{R}^m $ is closed and convex, $\varphi\in
C(X,\mathbb{R}^+),$ then

\begin{equation*}\begin{split}
     &BS\left(\left\{x\in X: A\left(\left(\frac{\sum_{k=0}^{n-1}f_{j}(T^{k}x)}{\sum_{k=0}^{n-1}g_{j}(T^{k}x)}\right)_{j\in \{1,2,\cdots,m\}}\right)\subseteq C\right\},\varphi\right)
\\&=\sup\left\{\frac{h(T,\mu)}{\int\varphi d\mu}:\mu\in M(X,T),\left(\frac{\int
f_i d\mu}{\int g_id\mu}\right)_{i=\{1,\cdots,m\}}\in C\right\}.
\end{split}\end{equation*}
\end{cro}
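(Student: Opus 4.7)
The plan is to reduce the BS-dimension statement to the topological pressure statement of Corollary \ref{cor5.2} via Bowen's equation. Recall the basic fact, noted in the paper just before Corollary \ref{cor5.4}, that whenever $\varphi\in C(X,\mathbb{R}^+)$ and $Z\subseteq X$, the BS-dimension $BS(Z,\varphi)$ is the unique root $s$ of $P(Z,-s\varphi)=0$. So it suffices to identify this root explicitly when $Z$ is the level set
\begin{equation*}
Z=\left\{x\in X:A\left(\left(\tfrac{\sum_{k=0}^{n-1}f_j(T^kx)}{\sum_{k=0}^{n-1}g_j(T^kx)}\right)_{j=1,\ldots,m}\right)\subseteq C\right\}.
\end{equation*}

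First, I would apply Corollary \ref{cor5.2} to the continuous potential $\psi:=-s\varphi$ for arbitrary $s\in\mathbb{R}$. This is legitimate because $-s\varphi\in C(X,\mathbb{R})$ and $C$ is closed and convex as required. The conclusion is
\begin{equation*}
P(Z,-s\varphi)=\sup\left\{h(T,\mu)-s\int\varphi\,d\mu:\mu\in M(X,T),\,\Xi\mu\in C\right\},
\end{equation*}
where $\Xi\mu=\bigl(\int f_i\,d\mu/\int g_i\,d\mu\bigr)_{i=1,\ldots,m}$. Write $\mathcal{M}_C$ for the set of $\mu\in M(X,T)$ with $\Xi\mu\in C$, and let $F(s):=\sup_{\mu\in\mathcal{M}_C}\{h(T,\mu)-s\int\varphi\,d\mu\}$, so $P(Z,-s\varphi)=F(s)$.

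Second, I would invoke the standard Bowen-equation argument to convert the sup-difference into a sup-ratio. Because $\varphi>0$ is bounded below by some $\varphi_{\min}>0$ on the compact space $X$, the function $s\mapsto F(s)$ is strictly decreasing wherever finite; moreover $F(0)\ge 0$ and $F(s)\to-\infty$ as $s\to\infty$, so $F$ has a unique zero $s^*$. Setting $s^{**}:=\sup_{\mu\in\mathcal{M}_C}h(T,\mu)/\int\varphi\,d\mu$, one checks both inequalities: if $s>s^{**}$ then $h(T,\mu)\le s^{**}\int\varphi\,d\mu<s\int\varphi\,d\mu$ for every $\mu\in\mathcal{M}_C$, and taking a maximizing sequence shows $F(s)\le -(s-s^{**})\varphi_{\min}<0$; while if $s<s^{**}$ there exists $\mu_0\in\mathcal{M}_C$ with $h(T,\mu_0)/\int\varphi\,d\mu_0>s$, giving $F(s)\ge h(T,\mu_0)-s\int\varphi\,d\mu_0>0$. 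Hence $s^*=s^{**}$, and therefore $BS(Z,\varphi)=s^{**}$, which is the claimed formula.

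The only nontrivial point is the reduction itself: Corollary \ref{cor5.2} must have been stated in sufficient generality (namely for every $\psi\in C(X,\mathbb{R})$, not only for $\psi\ge 0$) so that $\psi=-s\varphi$ is admissible for arbitrary $s$, and one needs $\mathcal{M}_C\neq\emptyset$ for the equation $F(s)=0$ to have a finite root (otherwise both sides of the claimed identity are conventionally $-\infty$). Given that, the rest is a purely calculational passage from pressure to dimension that parallels the proofs of Corollaries \ref{cor5.4}–\ref{cor5.7} from their pressure counterparts.
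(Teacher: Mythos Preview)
Your proposal is correct and follows essentially the same route as the paper. The paper states Corollary~\ref{cor5.9} without proof, relying on the remark preceding Corollary~\ref{cor5.4} that $BS(Z,\varphi)$ is the unique root of $P(Z,-s\varphi)=0$; your argument makes this reduction explicit by applying Corollary~\ref{cor5.2} with $\psi=-s\varphi$ and then carrying out the standard Bowen-equation computation to pass from the supremum of differences to the supremum of ratios.
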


\subsection{symbolic space and iterated function systems}

Consider a subshift of finite type $\Sigma_A^+$ of the unilateral
full shift on $m$ symbols $I=\{1,2,\cdots,m\}$ with $m\geq2.$ Let
$\sigma $ be the shift map, and $A=(a_{ij})_{1\leq i,j\leq m}$ be
the transfer matrix of zeros and ones. In this section, we assume
that $A$ is an irreducible and aperiodic stochastic matrix, that is,
there is some power $m$ such that all the entries of $A^m$ are
strictly positive. This  assumption implies the specification
property.

For $x=(x_i)_{i\geq1}$ and $y=(y_i)_{i\geq1},$ set
$\nu(x,y)=\inf\{i\geq1: x_i\neq y_i\}.$ Let $\varphi$ be a strictly
positive continuous function on $\Sigma_A^+.$ Write $S_n
\varphi=\sum\limits_{i=0}^{n-1}\varphi\circ\sigma^i$ for each
$n\geq1.$ For $x\neq y\in \Sigma_A^+,$ define

\begin{equation*}
d_\varphi(x,y)=\left\{
  \begin{array}{ll}
    0, & x=y, \\
    1, &  x_1\neq y_1,\\
   \exp(-\min\limits_{\nu(x,z)\geq m}S_m \varphi(z)), & m=\nu(x,y).
  \end{array}
\right.
\end{equation*}
Remark that given $\Psi>1,$ we can choose $\varphi\equiv\ln \Psi,
d_\varphi$ is the metric in \cite{Ols2}.

\begin{prop}\label{prop6.1}
In $(\Sigma_A^+, d_\varphi),$ for any subset $Z\subset \Sigma_A^+,$
we get $dim_H(Z)= BS(Z,\varphi).$
\end{prop}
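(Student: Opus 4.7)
The plan is to exploit the ultrametric structure of $d_\varphi$ in order to identify each Bowen ball $B_n(x,\epsilon)$ with a cylinder set of length roughly $n+k(\epsilon)$, and to show that the $d_\varphi$-diameter of such a cylinder is comparable to $\exp(-S_n\varphi(x))$ up to constants depending only on $\epsilon$. This puts the Hausdorff and BS constructions in quantitative correspondence.

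First, I would verify that $d_\varphi$ is an ultrametric: if $\nu(x,z)\geq m$ and $\nu(z,y)\geq m$, then $\nu(x,y)\geq m$, whence $d_\varphi(x,y)\leq\max\{d_\varphi(x,z),d_\varphi(z,y)\}$. Consequently every closed $d_\varphi$-ball is a cylinder set, and the $d_\varphi$-diameter of the cylinder $[y_1,\dots,y_{m-1}]$ equals $\exp(-\min_{z\in[y_1,\dots,y_{m-1}]}S_m\varphi(z))$, directly from the definition of $d_\varphi$.

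Next, I would compare Bowen balls with cylinders. Put $c:=\min\varphi>0$; then $d_\varphi(u,v)\leq\epsilon$ forces $\nu(u,v)\geq k(\epsilon)$ with $k(\epsilon)\to\infty$ as $\epsilon\to 0$, so $y\in B_n(x,\epsilon)$ implies $x_j=y_j$ for $j=1,\dots,n+k(\epsilon)-1$, i.e.\ $B_n(x,\epsilon)\subseteq[x_1,\dots,x_{n+k(\epsilon)-1}]$; conversely every point of $[x_1,\dots,x_{n+M(\epsilon)}]$ lies in $B_n(x,\epsilon)$ for a sufficiently large constant $M(\epsilon)$. Combining these inclusions with uniform continuity of $\varphi$ yields constants $C_1(\epsilon),C_2(\epsilon)>0$ independent of $n,x$ with
\[
C_1(\epsilon)\exp(-S_n\varphi(x))\leq\mathrm{diam}_{d_\varphi}(B_n(x,\epsilon))\leq C_2(\epsilon)\exp(-S_n\varphi(x)).
\]
I would then establish $\dim_H(Z)=BS(Z,\varphi)$ by a two-sided comparison of covers: any cover of $Z$ by Bowen balls $B_{m_i}(x_i,\epsilon)$ with $m_i\geq n$ is simultaneously a cover by sets of $d_\varphi$-diameter at most $C_2(\epsilon)e^{-cn}\to 0$, and $\sum_i(\mathrm{diam}_{d_\varphi}(B_{m_i}(x_i,\epsilon)))^t\leq C_2(\epsilon)^t\sum_i\exp(-tS_{m_i}\varphi(x_i))$, giving $\dim_H(Z)\leq BS(Z,\varphi)$; conversely any cover of $Z$ by sets of small $d_\varphi$-diameter refines, via the ultrametric property, to a cover by cylinders, each of which is a Bowen ball of a suitable order, and the lower bound in the display converts the Hausdorff sum back into a BS sum, giving $BS(Z,\varphi)\leq\dim_H(Z)$. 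Letting $\epsilon\to 0$ kills the $\epsilon$-dependent constants.

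The main obstacle is verifying two uniform bookkeeping items: (i) that the $k(\epsilon)$ extra coordinates contribute only a multiplicative constant depending on $\epsilon$ rather than a factor growing with $n$, which follows from the pointwise bound $S_{n+k(\epsilon)-1}\varphi-S_n\varphi\leq k(\epsilon)\sup\varphi$; and (ii) that the discrepancy between $\sup_{y\in B_n(x,\epsilon)}S_n\varphi(y)$ (appearing in the BS definition) and $S_n\varphi(x)$ (appearing in the diameter estimate) is at most $n\,\omega_\varphi(\epsilon)$, where $\omega_\varphi$ is the modulus of continuity of $\varphi$; this contributes an exponent perturbation that vanishes in the limit $\epsilon\to 0$. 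Once these uniformities are in place, the equality $\dim_H(Z)=BS(Z,\varphi)$ follows immediately.
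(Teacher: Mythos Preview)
The paper states Proposition~\ref{prop6.1} without proof, so there is nothing to compare against; your task is simply to supply a correct argument, and your strategy is the standard and essentially correct one.

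A couple of remarks that would sharpen the write-up. First, you can say more than ``Bowen balls are sandwiched between cylinders'': in this ultrametric each condition $d_\varphi(\sigma^jx,\sigma^jy)\le\epsilon$ is equivalent to $\nu(x,y)\ge K_j(x,\epsilon)+j$ for some threshold $K_j$, so $B_n(x,\epsilon)$ is \emph{equal} to a cylinder $[x_1,\dots,x_{L-1}]$ with $L=L(n,x,\epsilon)$ satisfying $n+k(\epsilon)\le L\le n+M(\epsilon)$. This makes the diameter formula exact: $\mathrm{diam}_{d_\varphi}(B_n(x,\epsilon))=\exp(-\min_{z\in B_n(x,\epsilon)}S_L\varphi(z))$. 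Second, your item~(ii) is the crux and deserves to be made quantitative: since $S_m\varphi\ge mc$ one has $m\le c^{-1}\sup_{y\in B_m}S_m\varphi(y)$, so the bound $|\sup_y S_m\varphi(y)-\min_z S_m\varphi(z)|\le 2m\,\omega_\varphi(\epsilon)$ yields
\[
\mathrm{diam}(B_m(x,\epsilon))^t\le \exp\bigl(-t(1-2\omega_\varphi(\epsilon)/c)\sup_{y\in B_m}S_m\varphi(y)\bigr),
\]
and the reverse comparison is analogous. This shows $\dim_H(Z)$ and $BS(Z,\varphi,\epsilon)$ differ by at most a factor $1\pm O(\omega_\varphi(\epsilon))$, which tends to $1$ as $\epsilon\to0$; that is precisely the ``exponent perturbation'' you allude to. With these two points made explicit the proof is complete for arbitrary continuous $\varphi>0$, with no H\"older assumption needed.
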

Let $\omega_{ij}$ be a Lipschitz contraction map on $\mathbb{R}^n$
for each nonzero $a_{ij}.$ There exists a unique vector
$E=(E_1,\cdots,E_m)$ of non-empty compact subsets of $\mathbb{R}^n$
satisfying $E_i=\bigcup\limits_{a_{ij}=1}\omega_{ij}(E_j).$ The
union $E=\bigcup\limits_{i=1}^mE_i$ is called a self-similar set for
recurrent iterated function system $\{\omega_{ij},(a_{ij})\}.$

Let $F$ be a compact subset of $E.$ Set $F_i=F\cap
E_i,i=1,\cdots,m,$ if vector $(F_1, F_2, \cdots, F_m)$ satisfying
$F_i\subseteq \bigcup\limits_{a_{ij}=1}\omega_{ij}(E_j),$ then the
set $F$ is called a sub-self-similar set for
$\{\omega_{ij},(a_{ij})\}.$

\noindent Assume that

\noindent (i) Each map $\omega_{ij}$ is a $C^{1+\gamma}$
diffeomorphism.

\noindent (ii) $D\omega_{ij}$ is always a similarity map, i.e.,
$|(D\omega_{ij})_x(\nu)|=s_{ij}(x)\cdot |\nu|$ for each
$x,\nu\in\mathbb{R}^n.$

\noindent (iii) $\{\omega_{ij},(a_{ij})\}$ satisfies the open set
condition \cite{Fal}.

Let $\pi:\Sigma_A^+\to E$ be given by

$\pi(x)=$ the only point in
$\bigcap\limits_{n\geq1}\omega_{x_ix_2}\omega_{x_2x_3}\cdots\omega_{x_{n-1}x_{n}}(E_{x_n}).$

The scale function of $E$ is the map $\psi:\Sigma_A^+\to \mathbb{R}$
given by $\psi(x)=\log s_{x_1x_2}(\pi\sigma x).$ Let
$\varphi(x)=-\psi(x),$ then $\varphi$ is a positive
H${\rm\ddot{o}}$lder continuous function.

\begin{prop}\label{prop6.2}
In $(\Sigma_A^+, d_\varphi),$ for any subset $Z\subset \Sigma_A^+,$
we get $dim_H(\pi Z)= dim_H( Z).$
\end{prop}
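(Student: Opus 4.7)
The plan is to show that the coding map $\pi : (\Sigma_A^+, d_\varphi) \to (E, \|\cdot\|)$, where $\|\cdot\|$ is the Euclidean norm on $\mathbb{R}^n$, is bi-Lipschitz up to multiplicative constants. Since bi-Lipschitz maps preserve Hausdorff dimension---and the set of $x \in \Sigma_A^+$ where $\pi$ fails to be injective is countable under the open set condition and therefore $\dim_H$-negligible---this will imply $\dim_H(\pi Z) = \dim_H(Z)$ for every $Z \subseteq \Sigma_A^+$.

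For the upper bound, fix $x \neq y \in \Sigma_A^+$ with $m = \nu(x,y)$, so the first $m-1$ coordinates coincide. Writing $\pi(x) = \omega_{x_1 x_2} \circ \cdots \circ \omega_{x_{m-1} x_m}(\pi \sigma^{m-1} x)$ and similarly for $y$, hypothesis (ii) together with the chain rule yields
\begin{equation*}
\|\pi(x) - \pi(y)\| \leq \mathrm{diam}(E) \cdot \prod_{j=1}^{m-1} s_{x_j x_{j+1}}(\xi_j)
\end{equation*}
for intermediate points $\xi_j$. Since $\psi(x) = \log s_{x_1 x_2}(\pi \sigma x)$ and $\varphi = -\psi$ is H\"older, a standard bounded-distortion lemma yields a constant $K$ with $|S_m \varphi(z) - S_m \varphi(z')| \leq K$ whenever $z, z'$ agree in the first $m$ coordinates. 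Hence the product above is comparable to $\exp(S_{m-1}\psi(x)) = \exp(-S_{m-1}\varphi(x))$, which in turn is comparable to $d_\varphi(x,y)$. This furnishes a Lipschitz constant for $\pi$ and therefore $\dim_H(\pi Z) \leq \dim_H(Z)$.

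For the reverse inequality I would invoke the open set condition from hypothesis (iii), which produces a bounded nonempty open set $U \subset \mathbb{R}^n$ with $\bigcup_{a_{ij}=1} \omega_{ij}(U) \subseteq U$ and pairwise disjoint $\omega_{ij}(U)$. Since $x_m \neq y_m$, the images $\omega_{x_1 x_2} \circ \cdots \circ \omega_{x_{m-1} x_m}(U)$ and $\omega_{y_1 y_2} \circ \cdots \circ \omega_{y_{m-1} y_m}(U)$ are disjoint subsets of the common cylinder image of the first $m-1$ symbols, and a classical geometric argument comparing the thickness of each piece with the gap guaranteed by $U$ yields a uniform constant $c > 0$ with $\|\pi(x) - \pi(y)\| \geq c \prod_{j=1}^{m-1} s_{x_j x_{j+1}}(\xi_j)$; by bounded distortion this is again comparable to $d_\varphi(x,y)$. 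The main obstacle is precisely this lower estimate: one must convert the qualitative disjointness of the open set condition into a uniform quantitative gap correctly scaled by the iterated contraction ratio, and this in turn demands uniform control of the dependence of $s_{ij}(\cdot)$ on the base point along arbitrarily long compositions---exactly what the H\"older hypothesis on $\varphi$ provides through the bounded-distortion lemma. With both Lipschitz estimates in hand, the bi-Lipschitz equivalence is established and the conclusion follows.
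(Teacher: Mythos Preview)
The paper does not actually supply a proof of this proposition; it is stated without argument and immediately combined with Proposition~\ref{prop6.1} to transfer the earlier corollaries to the iterated-function-system setting. So there is nothing in the paper to compare against, and your proposal has to stand on its own.

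Your upper bound is fine: the Lipschitz estimate for $\pi$ follows from bounded distortion exactly as you describe and yields $\dim_H(\pi Z)\le\dim_H(Z)$.

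The lower bound, however, does not go through. The open set condition guarantees only that the images $\omega_{ij}(U)$ are \emph{disjoint}, not that they are \emph{separated}; their closures may touch. The simplest instance already occurs for $\omega_0(t)=t/2$, $\omega_1(t)=(t+1)/2$ on $[0,1]$, which satisfies all of (i)--(iii) with $U=(0,1)$: here $\pi(01^n0^\infty)$ and $\pi(10^n1^\infty)$ both converge to $1/2$, so $\|\pi(x)-\pi(y)\|$ can be made arbitrarily small while $d_\varphi(x,y)$ stays bounded away from $0$. Thus $\pi$ is not bi-Lipschitz, and discarding the non-uniquely-coded points (which, incidentally, need not form a countable set under OSC alone) does not rescue the estimate. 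The ``uniform gap'' you try to extract from disjointness of the $\omega_{ij}(U)$ simply does not exist in general.

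The standard route to $\dim_H(Z)\le\dim_H(\pi Z)$ replaces the pointwise lower Lipschitz bound by a bounded-multiplicity (Moran cover) argument: one shows that every Euclidean ball $B(p,r)$ meets at most a fixed number $N$ of cylinder images whose $d_\varphi$-diameter is comparable to $r$, uniformly in $p$ and $r$. This is precisely where OSC and bounded distortion are used, via a volume count of disjoint pieces inside $U$. A cover of $\pi Z$ by Euclidean balls then converts to a cover of $Z$ by cylinders with at most an $N$-fold inflation of the $s$-sum, which gives the missing inequality. Substituting this multiplicity estimate for your bi-Lipschitz claim would repair the argument.
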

Combining Propositions \ref{prop6.1} and \ref{prop6.2}, Corollaries
\ref{cor5.1}, \ref{cor5.2},  \ref{cor5.3}, \ref{cor5.4},
\ref{cor5.5}, \ref{cor5.6}, \ref{cor5.7}, \ref{cor5.8},
\ref{cor5.9}, \ref{cor5.10} can hold about Hausdorff dimension in
iterated function system with open set condition. We take Corollary
\ref{cor5.9} as an example.

\begin{cro}\label{cro6.1}
 Let $f_1, g_1, \cdots, f_m, g_m$ be continuous functions $f_i, g_i:
\Sigma_A^+\to \mathbb{R}$ with $g_i(x)\neq0$ for all $x\in
\Sigma_A^+, i=1,\cdots,m$ and $\int g_i d\mu\neq0,$ for all $\mu\in
M(\Sigma_A^+,\sigma),i=1,\cdots,m .$ If $C\subseteq \mathbb{R}^m $
is closed and convex, then

\begin{equation*}\begin{split}
     &\dim_H\left(\pi\left\{x\in \Sigma_A^+: A\left(\left(\frac{\sum_{k=0}^{n-1}f_{j}(\sigma^{k}x)}{\sum_{k=0}^{n-1}g_{j}(\sigma^{k}x)}\right)_{j\in \{1,2,\cdots,m\}}\right)\subseteq C\right\}\right)
\\&=\sup\left\{\frac{h(T,\mu)}{-\int\log s_{x_1x_2}(\pi\sigma x) d\mu}:\mu\in M(\Sigma_A^+,\sigma),\left(\frac{\int
f_i d\mu}{\int g_id\mu}\right)_{i=\{1,\cdots,m\}}\in C\right\}.
\end{split}\end{equation*}
\end{cro}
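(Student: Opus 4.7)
The plan is to obtain this corollary as a direct synthesis of the three tools assembled just above it: Proposition \ref{prop6.2}, which transfers a Hausdorff dimension calculation from the attractor $\pi Z$ down to the symbolic coding $Z\subseteq \Sigma_A^+$; Proposition \ref{prop6.1}, which identifies Hausdorff dimension in the metric $d_\varphi$ with the BS-dimension with potential $\varphi$; and Corollary \ref{cor5.9}, which computes precisely the BS-dimension of the relative multifractal level set in terms of a variational expression. The map $\Xi:\mu\mapsto (\int f_i\,d\mu/\int g_i\,d\mu)_{i=1,\dots,m}$ is well-defined and continuous on $M(\Sigma_A^+,\sigma)$ because $\int g_i\,d\mu\neq 0$ by hypothesis, so the level set makes sense in the framework of the preceding corollaries.

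First I would set $\varphi(x) := -\psi(x) = -\log s_{x_1 x_2}(\pi\sigma x)$. By assumption each $\omega_{ij}$ is a $C^{1+\gamma}$ contracting similarity, so $s_{ij}(\cdot)\in(0,1)$ and $\varphi$ is a strictly positive H\"older continuous function on $\Sigma_A^+$; in particular $\varphi\in C(\Sigma_A^+,\mathbb{R}^+)$, which is the potential regularity needed by Corollary \ref{cor5.9}. Next I would verify that $(\Sigma_A^+,\sigma)$ satisfies the structural hypotheses $(X,T,\Xi,L_n,Y)$ demanded by the corollary: since $A$ is assumed primitive, the subshift $\Sigma_A^+$ has the specification property, which is strictly stronger than both the $g$-almost product property and the uniform separation property (as noted in the introduction). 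So Corollary \ref{cor5.9} applies verbatim and yields
\begin{equation*}
BS\!\left(\left\{x\in\Sigma_A^+:A\!\left(\!\left(\tfrac{\sum_{k=0}^{n-1}f_j(\sigma^k x)}{\sum_{k=0}^{n-1}g_j(\sigma^k x)}\right)_{j}\!\right)\subseteq C\right\},\varphi\right)=\sup\left\{\frac{h(\sigma,\mu)}{\int\varphi\,d\mu}:\mu\in M(\Sigma_A^+,\sigma),\;\Xi\mu\in C\right\}.
\end{equation*}

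Finally I would change from BS-dimension to Hausdorff dimension and descend from $\Sigma_A^+$ to the attractor. By Proposition \ref{prop6.1}, applied in the metric $d_\varphi$ built from this very $\varphi$, the left-hand side equals $\dim_H$ (with respect to $d_\varphi$) of the same level set. By Proposition \ref{prop6.2}, $\dim_H(\pi Z)=\dim_H(Z)$ for any subset $Z\subseteq\Sigma_A^+$, which transports the equality to the Euclidean Hausdorff dimension of the image under the coding map $\pi$. Substituting $\int\varphi\,d\mu=-\int\log s_{x_1x_2}(\pi\sigma x)\,d\mu$ yields the asserted formula.

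The only real technical point I would have to articulate carefully is the compatibility of the two dimension identifications in the chain $\dim_H(\pi Z)=\dim_H(Z)=BS(Z,\varphi)$: one must ensure that the metric $d_\varphi$ on $\Sigma_A^+$ used in Proposition \ref{prop6.1} is precisely the metric with respect to which $\pi$ enjoys the Lipschitz/bi-Lipschitz behaviour used in Proposition \ref{prop6.2} (this is why the scale function $\psi$ is chosen as it is, and why the open set condition and the similarity hypothesis on $D\omega_{ij}$ enter). Everything else is a bookkeeping application of the general framework developed in Sections 3--5, so I do not anticipate any new obstacle beyond checking that the hypotheses of each cited result are in force.
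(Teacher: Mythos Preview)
Your proposal is correct and matches the paper's approach exactly: the paper does not give a standalone proof of this corollary but simply states that it follows by combining Propositions~\ref{prop6.1} and~\ref{prop6.2} with Corollary~\ref{cor5.9}, which is precisely the chain $\dim_H(\pi Z)=\dim_H(Z)=BS(Z,\varphi)$ you spell out. One small refinement: the uniform separation property for $(\Sigma_A^+,\sigma)$ is guaranteed not by specification but by expansivity of the shift (as the introduction notes, uniform separation holds for expansive maps), while specification from primitivity of $A$ yields the $g$-almost product property; both hypotheses are therefore in force, and your argument goes through unchanged.
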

Remark that our results are valid for sofic system (self-conformal
function system) induced by a subshift of finite type modelled by a
directed  and strongly connected multigraph. Similar to Corollary
\ref{cro6.1}, we give a positive answer to the conjecture in
\cite{Ols2} (see \cite{OlsWin2} for the view of Hausdorff dimension)
from the view of topological pressure.

%%%%%%%%%%%%%%%%%%%%%%%%%%%%%%%%%%%%%%%%%%%%%%%%%%%%%%%%%%%%%%%%%%%%%%%%%
%%%%%%%%%%%%%%%%%%%%%%%%%%%%%%%%%%%%%%%%%%%%%%%%%%%%%%%%%%%%%%%%%%%%%%%%%
%%%%%%%%%%%%%%%%%%%%%%%%%%%%%%%%%%%%%%%%%%%%%%%%%%%%%%%%%%%%%%%%%%%%%%%%%
%%%%%%%%%%%%%%%%%%%%%%%%%%%%%%%%%%%%%%%%%%%%%%%%%%%%%%%%%%%%%%%%%%%%%%%%%

\noindent {\bf Acknowledgements.} The authors would like to thank
Olsen and Winter for sharing their articles with us.  The work was
supported by the National Natural Science Foundation of China (grant
No. 11271191) and National Basic Research Program of China (grant
No. 2013CB834100) and the Foundation for Innovative program of
Jiangsu province (grant No. CXZZ12 0380).
%%%%%%%%%%%%%%%%%%%%%%%%%%%%%%%%%%%%%%%%%%%%%%%%

 \footnotesize\noindent

\end{document}